\documentclass[reqno]{amsart}
\usepackage{amsmath,amssymb,amsthm}
\usepackage{mathtools}
\usepackage{graphicx}
\usepackage[unicode,hidelinks]{hyperref}
\usepackage{color}

\newcommand{\eps}{\varepsilon}
\newcommand{\la}{\lambda}
\newcommand{\rh}{\varrho}

\newcommand{\bs}{\boldsymbol}
\newcommand{\ve}{\bs v}
\newcommand{\we}{\bs w}
\newcommand{\uu}{\bs u}
\newcommand{\n}{\bs n}
\newcommand{\x}{\bs x}
\newcommand{\z}{\bs z}
\newcommand{\fe}{\bs g}
\newcommand{\fit}{\bs\varphi}

\newcommand{\je}{\bs j}

\newcommand{\bb}{\mathbb}
\newcommand{\R}{\bb{R}}
\newcommand{\N}{\bb{N}}
\newcommand{\Wb}{\bb{W}}
\newcommand{\D}{\bb{D}}
\newcommand{\B}{\bb{B}}
\newcommand{\A}{\bb{A}}
\newcommand{\PP}{\bb{P}}
\newcommand{\T}{\bb{T}}
\newcommand{\I}{\bb{I}}
\newcommand{\Sb}{\bb{S}}

\newcommand{\CO}{\mathcal{C}}
\newcommand{\OO}{\mathcal{O}}
\newcommand{\pr}{{\rm p}}

\newcommand{\f}[2]{\frac{#1}{#2}}
\newcommand{\PD}{\R^{d\times d}_{>0}}
\newcommand{\Sym}{\R^{d\times d}_{\rm sym}}
\newcommand{\Q}{Q}
\newcommand{\SigT}{\Sigma}
\newcommand{\Dv}{\D\ve}
\newcommand{\Wv}{\bb{W}\ve}
\newcommand{\dd}[1]{\,\mathrm{d}{#1}}
\newcommand{\ii}{\int_{\Omega}}
\newcommand{\nn}{\nabla}
\newcommand{\wc}{\rightharpoonup}
\newcommand{\wcs}{\overset{*}{\wc}}
\newcommand{\embl}{\hookrightarrow}
\newcommand{\thlim}{\theta_{0}^*}

\usepackage{accents}
\renewcommand*{\dot}[1]{\accentset{\bullet}{#1}}
\newcommand{\der}[1]{\accentset{\bs\circ}{#1}}

\DeclareMathOperator{\di}{div}
\DeclareMathOperator{\tr}{tr}
\DeclareMathOperator{\spa}{span}
\DeclareMathOperator*{\esssup}{ess\,sup}

\DeclarePairedDelimiter{\norm}{\lVert}{\rVert}
\DeclarePairedDelimiter{\scal}{\langle}{\rangle}

\newtheorem{definition}{Definition}[section]
\newtheorem{theorem}{Theorem}[section]
\newtheorem*{theorem*}{Main result}
\newtheorem{lemma}{Lemma}[section]
\numberwithin{equation}{section}

\title[Analysis of thermoviscoelastic fluids]
{Coupling the Navier-Stokes-Fourier equations with the Johnson-Segalman stress-diffusive viscoelastic model: global-in-time and large-data analysis}
\author[M. Bathory]{Michal Bathory}
\address{Michal Bathory\\
	University of Vienna\\
	Faculty of Mathematics\\
	Oskar-Morgenstern-Platz 1\\
	1090 Wien\\
	Austria}
\email{michal.bathory@univie.ac.at}
\author[M. Bul\'i\v cek]{Miroslav Bul\'i\v cek}
\address{Miroslav Bul\'i\v cek\\
	Charles University\\
	Faculty of Mathematics and Physics\\
	Mathematical Institute\\
	Sokolovsk\'a 83\\186 75 Praha 8\\Czech Republic}
\email{mbul8060@karlin.mff.cuni.cz}
\author[J. M\'{a}lek]{Josef M\'{a}lek}
\address{Josef M\'{a}lek\\
	Charles University\\
	Faculty of Mathematics and Physics\\
	Mathematical Institute\\
	Sokolovsk\'a 83\\186 75 Praha 8\\Czech Republic}
\email{malek@karlin.mff.cuni.cz}
\thanks{Michal Bathory is recipient of an APART-MINT Fellowship (No.~11976) of the~Austrian Academy of Sciences (\"OAW) and also acknowledges support from the~project No.\ 1652119 financed by the~Charles University Grant Agency (GAUK). Michal Bathory,  Miroslav Bul\'i\v cek and Josef M\'{a}lek thank to the~project No. 20-11027X financed by the~Czech Science foundation (GA\v{C}R).  Miroslav Bul\'{\i}\v{c}ek and Josef M\'{a}lek are members of the~Ne\v{c}as Center for Mathematical Modelling.}
\subjclass{35A23, 76A10, 76D03}
\keywords{viscoelastic heat-conducting fluids, Johnson-Segalman, weak solution}

\begin{document}
	
	\begin{abstract}
		We prove that there exists a~large-data and global-in-time weak solution to a~system of partial differential equations describing an unsteady flow of an incompressible heat-conducting rate-type viscoelastic stress-diffusive fluid filling up a~mechanically and thermally isolated container of any dimension. To overcome the~principle difficulties connected with ill-posedness of the~diffusive Oldroyd-B model in three dimensions, we assume that the~fluid admits a~strengthened dissipation mechanism, at least for excessive elastic deformations. All the~relevant material coefficients are allowed to depend continuously on the~temperature, whose evolution is captured by a~thermodynamically consistent equation. In fact, the~studied model is derived from scratch using only the~balance equations for linear momentum and energy, the~formulation of the~second law of thermodynamics and the~constitutive equation for the~internal energy. The~latter is assumed to be a~linear function of temperature, which simplifies the~model. The~concept of our weak solution incorporates both the~temperature and entropy inequalities, and also the~local balance of total energy provided that the~pressure function exists.
	\end{abstract}
	
	\maketitle
	
	\section{Introduction}
	
	Material properties of both synthetic and organic viscoelastic materials are very sensitive to temperature changes. Reliable predictions of corresponding processes by computational tools requires
	to incorporate complex thermal/mechanical effects into the~description of the~model. The~understanding how thermal and mechanical processes are coupled and what is the~structure of the~complete temperature equation has been considered as an open issue till recently (see \cite{Tanner2009, Hron2017}). A~methodology that can be used to develop such a~complete model (system of partial differential equations -- PDEs) and that is followed in this study has its origin in \cite{RS_2000,RS_2004}. A~complete (i.e. including elastic contribution to the~internal energy) thermodynamicly consistent models for viscoelastic rate type fluids is developed in \cite{Hron2017} where also further references to earlier studies including in particular \cite{Leonov1976, Wapperom1998, Dressler1999, Ireka2013} are given. Incorporation of additional stress diffusive phenomena into this thermodynamic framework is then developed in \cite{Malek2018}. 
	
	The aim of this study is to establish mathematical foundation for a~robust class of heat-conducting viscoelastic rate-type fluids with stress diffusion. In particular, we identify reasonable conditions on material functions/coefficients that are sufficient to prove long-time and large-data existence of weak solution. To develop analysis for complete thermal/mechanical systems of PDEs is considerably harder than to studying merely mechanical systems. To our best knowledge, there is only one existing analytical work dealing with such a~problem, see \cite{Bulicek2018}, where however the~elastic response is drastically reduced to a~spherical stress governed by a~scalar quantity. In our work, we do not make such an assumption and we work with the~full $d$-dimensional elastic tensor. On the~other hand, we assume that there is a~linear relation between the~internal energy and temperature. The~main purpose for this assumption is to simplify the~(already very technical) presentation of the~existence analysis. Additionally, the~linear relationship between the~internal energy and temperature is used in applications involving viscoelastic fluids, such as polymer melts, see \cite{Rao2002}. Taking this aside, our model contains no further simplification. A~complete physical derivation of the~model studied in this paper and a~more detailed description of the~participating physical quantities are given in Section~\ref{Sphys}. This opening section continues below with an informal formulation of the~main result, a~brief description of the~PDE system and a~basic overview of the~relevant literature. In~Section~\ref{Smat}, we introduce necessary notation, derive informally a~priori estimates that naturally leads to the~definition of function spaces in which the~existence theory is established. This section also contains precise definition of the~solution to studied problem and the~formulation of the~main result. Its proof represents the~content of remaining sections of the~paper, see Sections~\ref{SS1}--\ref{SS7}. Their more detailed description is given at the~end of Section~\ref{Smat}.
	
		
		\subsection*{Formulation of the~problem}
		
		We consider an incompressible fluid with the~constant density set to be one, for simplicity. The~fluid is flowing inside an open bounded connected set $\Omega\subset\R^d$ with a~Lipschitz boundary $\partial\Omega$. For an arbitrary (but fixed) time interval $(0,T)$, $T>0$, we set $\Q\coloneqq(0,T)\times\Omega$ and $\SigT\coloneqq(0,T)\times\partial\Omega$. Our main objective, in this study, is to develop a~long-time and large-data existence theory for the~following initial- and boundary-value problem.  
		
		For given 
		\begin{itemize}
			\item right-hand side $\fe:\Q\to\R^d$, 
			\item initial data $\ve_0:\Omega\to\R^d$, $\B_0:\Omega\to\Sym$ being positive definite and $\theta_0:\Omega\to(0,\infty)$, 
			\item constants $a\in\R$, $\alpha\geq0$, $\mu>0$ and $c_v>0$, 
			\item continuous functions $\nu,\la,\kappa:(0,\infty)\to(0,\infty)$ and $\PP:(0,\infty)\times\Sym\to\Sym$, 
		\end{itemize} 
		we look for functions $\ve:\Q\to\R^d$, $\pr,\theta,e,E,\eta, \xi:\Q\to\R$ and $\B,\Sb:\Q\to\Sym$ fulfilling the~(physical) restrictions
		{\allowdisplaybreaks\begin{align}
				&\theta>0,\hspace{9cm}\label{s1}\\
				&\B\x\cdot\x>0\quad\text{for all }\x\in\R^d\setminus\{0\},\label{s2} \\
				&\Sb=2\nu(\theta)\Dv+2a\mu\theta\B,\label{s22}\\
				&e=c_v\theta,\label{s3} \\
				&E=\tfrac12|\ve|^2+e,\label{s30}\\
				&\eta=c_v\ln \theta-f(\B),\quad\text{where}\quad f(\B)\coloneqq \mu(\tr\B-d-\ln\det\B),\label{s4} \\
				&\xi=\frac{2\nu(\theta)}{\theta}|\Dv|^2+\kappa(\theta)|\nn\ln\theta|^2+  \PP(\theta,\B)\cdot f'(\B)+\la(\theta)\nn\B\cdot\nn f'(\B),\label{s5}
		\end{align}}
		and solving (in a~suitable sense) the~following system of PDEs in $\Q$
		{\allowdisplaybreaks\begin{align}
				&\di\ve=0,\label{s6d}\\
				&\partial_t\ve+\ve\cdot\nn\ve+\nn\pr-\di\Sb=\fe,\label{s7}\\
				&\partial_t\B+\ve\cdot\nn\B+\PP(\theta,\B)-\di(\la(\theta)\nn\B)=\Wv\B-\B\Wv+a(\Dv\B+\B\Dv),\label{s8}\\
				&\partial_t e+\ve\cdot\nn e-\di(\kappa(\theta)\nn\theta)=\Sb\cdot\Dv,\label{s9}\\
				&\partial_tE+\ve\cdot\nn E-\di(\kappa(\theta)\nn\theta)=\di(-\pr\ve+\Sb\ve)+\fe\cdot\ve,\label{s10}\\
				&\partial_t\eta+\ve\cdot\nn\eta-\di(\kappa(\theta)\nn\ln\theta)+\di(\la(\theta)\nn f(\B))=\xi\label{s11eta}
		\end{align}}
		completed by the~boundary conditions on $\SigT$
		\begin{align}
			\ve\cdot\n&=0,\qquad(\Sb\n+\alpha\ve)_{\tau}=0,\label{s12bc}\\
			\n\cdot\nn\B&=0,\label{s13bc}\\
			\n\cdot\nn\theta&=0,\label{s14bc}
		\end{align}
		and by the~initial conditions fulfilled in $\Omega$
		\begin{equation}
			\ve(0, \cdot)=\ve_0,\qquad\B(0, \cdot)=\B_0,\qquad\theta(0, \cdot)=\theta_0.\label{s15ic}
		\end{equation}
		
		The~physical meaning of the~above unknowns is the~following: $\ve$ is the~flow velocity, $\pr$ is the~pressure, $\B$ is the~extra stress tensor (arising due to the~elastic properties of the~fluid), $\theta$ is the~temperature, $e$ is the~internal energy, $E$ is the~total energy and $\eta$ is the~entropy. We shall now state informally our main result.
		
		\begin{theorem*}
			If the~material coefficients $\kappa(\theta)$ and $P(\cdot,\B)$ grow sufficiently fast as $\theta\to\infty$ and $|\B|\to\infty$, respectively {\rm(}with the~other coefficents being merely bounded and positive{\rm)}, then there exists a~generalized global-in-time solution of the~system \eqref{s1}--\eqref{s15ic} for any initial data with finite total energy and entropy.
		\end{theorem*}

		In order to explain the~equations above, let us first clarify some notation, see also the~beginning of Section~\ref{Smat}. The~symbol $\ve\cdot\nn\ve$ denotes a~vector with the~$i$-component 
		$(\ve\cdot\nn\ve)_i = \sum_{k=1}^d v_k \partial_{x_k}v_i$. Similarly, $\ve\cdot\nn\B$ is a~tensor with the~$ij$-component 
		$(\ve\cdot\nn\B)_{ij} = \sum_{k=1}^d v_k \partial_{x_k}(\B)_{ij}$.  The~first two terms of each equation~\eqref{s7}--\eqref{s11eta} represent the~material (or convective) derivative of the~respective unknown and we shall sometimes use the~abbreviation
		\begin{equation*}
			\dot{u}\coloneqq\partial_t u+\ve\cdot\nn u.
		\end{equation*}
		Further, the~symbol $\n$  denotes the~outward unit normal vector at a~given point of $\partial \Omega$ and $\z_{\tau}$ stands for the~tangential part (with respect to $\partial \Omega$) of any vector $ \z\in\R^d\cap\partial \Omega$, i.e. $\z_{\tau}\coloneqq\z-(\z\cdot\n)\n$. Furthermore, for any vector $\uu:\Omega\to\R^d$, the~symbols $\D\uu$ and $\mathbb{W}\uu$ denote the~symmetric and antisymmetric parts of a~gradient $\nn\uu=(\partial_j\uu_i)_{i,j=1}^{d}$ so that $\nn\uu=\D\uu+\mathbb{W}\uu$ with $(\D\uu)^T=\D\uu$ and $(\mathbb{W}\uu)^T=-\mathbb{W}\uu$.
		
		The first two equations \eqref{s6d} and \eqref{s7} resemble the~incompressible Navier--Stokes system for the~unknowns velocity field $\ve$ and the~pressure (constitutively undetermined part of the~Cauchy stress) $\pr$, however, with an additional term $2a\mu\di(\theta\B)$ coming from $\Sb$ and bringing to the~problem two other quantities: the~temperature $\theta$ and the~tensor $\B$ representing the~elastic response of the~fluid.  The~presence of this additional term prohibits one to use the~usual methods known in the~analysis of the~Navier-Stokes-Fourier-like systems, as there is no longer an useful form of the~balance of kinetic energy (the inner product $\B\cdot\Dv$ does not have a~sign). Instead, the~estimates on $\nn\ve$ are deduced only after taking into account the~whole thermodynamical evolution of the~system.
		
		Since the~dependence of the~material parameters (namely the~viscosity of the~fluid) on the~pressure $\pr$ is neglected, we simplify the~analysis by eliminating the~pressure from the~system completely, taking the~Leray projection of \eqref{s7} and searching for $\ve$ in divergence-free function spaces. If needed (for example if we want to preserve the~equation~\eqref{s10}), the~pressure can be reconstructed at the~last step. Then, it is known that Navier's slip boundary condition \eqref{s12bc}, or even more generally stick-slip boundary condition, allows one to prove that $\pr$ is an integrable function (if the~boundary of $\Omega$ is smooth enough so that $W^{2,r}$-regularity for the~classical Neumann problem holds), see \cite{BulicekIndiana, Bulicek2019, stick1, stick2, Blechta2019} for details. 
		Recall that the~integrability of the~pressure is not known to be true in general for no-slip boundary condition. The~integrability of $\pr$ is not only important in itself, but is also useful for the~validity of the~weak formulation of \eqref{s10}.
		
		To understand equation~\eqref{s8}, it is better to define first the~objective derivative of $\B$ as
		\begin{equation}\label{objder2}
			\der{\B}\coloneqq\dot{\B}-(\Wv\B-\B\Wv)-a(\Dv\B+\B\Dv),\quad a\in\R.
		\end{equation}
		This turns \eqref{s8} into
		\begin{equation}\label{JS2}
			\der{\B}+\PP(\theta,\B)-\di(\la(\theta)\nn\B)=0,
		\end{equation}
		which is a~mathematical formulation of a~generalized (due to an implicit form of $P$) Johnson--Segalman (\cite{Johnson1977}) viscoelastic model with stress diffusion (cf.\ \cite{Olmsted_2000} and references therein) and temperature dependent material parameters. The~reason why $\der{\B}$ appears in \eqref{JS2} is that,  unlike the~material derivative, the~objective derivative $\der{\B}$ (for any $a$) transforms correctly (as a~tensor) under a~time dependent rotation of the~observer. When $a\in[-1,1]$, then $\der{\B}$ is precisely the~Gordon-Schowalter derivative (\cite{Gordon1975}). It is known (see e.g.\ \cite{Pivokonsky_2015}) that by modifying the~value of $a$, it is possible to capture a~shear-thinning behaviour of the~fluid. The~case $a=0$ leads to the~class of models with the~corrotational objective derivative (cf.\ \cite{Zaremba1903}), which has very special properties that simplify the~analysis. The~case $a=1$ in \eqref{objder2} coincides with the~upper-convected objective derivative, which is probably the~most popular choice in the~literature. One of the~main features of our analysis is that, we are able to treat \eqref{s8} with any $a\in[-1,1]$ (or even $a\in\R$). As we shall see later, if $a\neq0$, the~summability of the~nonlinear terms like $\B\Dv$ in \eqref{s8} (and especially the~related term in \eqref{s9}) becomes the~main difficulty. This is essentially the~reason, why we formulate \eqref{s8} with a~general function $\PP(\theta,\B)$. The~strategy is that if $\PP(\theta,\B)$ grows sufficiently fast as $|\B|\to\infty$, then $\B$ admits enough integrability to define a~meaningful concept of solution of the~system \eqref{s1}--\eqref{s15ic}. Moreover, as the~form of $\PP$ can be attributed to the~dissipation mechanism of the~fluid, restricting its asymptotic growth should not be seen as a~significant physical drawback of our model. Recall that, for the~ classical Oldroyd-B and Giesekus models, the~function $\PP$ takes the~form  
		\begin{equation*}
			\PP(\theta,\B) = \delta(\theta) (\B - \I) \quad \textrm{ and } \quad 
			\PP(\theta,\B) =  \delta(\theta) (\B^2 - \B),
		\end{equation*}
		respectively. While these models are not covered by the~analysis presented below, the~existence result, in three dimensions, holds for 
		\begin{equation*}
			\PP(\theta,\B) = \delta(\theta) (\B^{\alpha} - \B^{\alpha-1}),\quad\alpha>2,
		\end{equation*}
		or
		\begin{equation} 
			\PP(\theta,\B) = \delta(\theta) \max\{1, \frac{|\B-\I|^2}{K^2}\} (\B - \I),\quad K>0. \label{pepa1}
		\end{equation}
		Note that the~last model coincides with the~Oldroyd-B model as long as $|\B-\I|\le K$.
		
		Due to \eqref{s3}, the~balance of internal energy \eqref{s9} is also the~temperature equation. As we hinted above, the~term $2a\mu\theta\B\cdot\Dv$ on the~right-hand side of \eqref{s9} is the~most difficult term to control in the~whole system \eqref{s6d}--\eqref{s11eta} and it is also the~term which is occasionaly omitted in some ``naive'' approaches to thermoviscoelasticity, as pointed out in \cite[Section~3]{Hron2017}.  Note also that this term does not have a~clear sign and thus, one cannot conclude the~positivity of temperature directly from \eqref{s9}  as in the~Navier-Stokes-Fourier case.  The~equations \eqref{s10} and \eqref{s11eta} govern the~evolution of two other unknowns $E$ and $\eta$, respectively. Since these quantities together with $\theta$ are mutually connected by simple algebraic relations \eqref{s3}, \eqref{s30} and \eqref{s4}, the~equations \eqref{s9}--\eqref{s11eta} are interchangeable and each of them alone can be used as the~equation for temperature evolution. To see this, note that \eqref{s30} and \eqref{s4} imply
		\begin{align}
			\partial_tE&=\ve\cdot\partial_t\ve+\partial_te\label{op1}\\
			\partial_t\eta&=c_v\theta^{-1}\partial_t\theta- f'(\B) \cdot\partial_t\B.\label{op2}
		\end{align}
		Within the~considered system of equations (assuming that all involved operations are meaningful), one can verify that the~equations \eqref{s9}, \eqref{s10}, \eqref{s11eta} are mutually equivalent. We remark that this equivalence may no longer be in place when, on the~level of generalized solutions, the~integrability of the~solution is not sufficient to define the~critical nonlinear terms in \eqref{s9} and \eqref{s10}, that is $\theta\B\cdot\Dv$ and $\theta\B\ve$, respectively. For example, this would be the~case where the~initial datum $\B_0$ has low integrability, as then the~available apriori estimates deteriorate (cf.~\eqref{Best0} below). In such cases, one may be forced to discard \eqref{s9}, or even \eqref{s10} from the~notion of generalized solution and leave only \eqref{s11eta}, which is least restrictive but still sufficient (together with the~global version of \eqref{s10}) to keep track of the~thermal evolution of the~system. Generalized solutions relying on the~weak formulation of balance of entropy were applied, e.g., in \cite{Feireisl06}, \cite{BuFei2009}, \cite{Feireisl2012}, \cite{Feireisl2016} or in \cite{Fremond_2012} for different fluid models. See also \cite{Havrda} for similar ideas in context of certain mixtures. For brevity, in this work, we shall avoid the~low integrability case and work only in the~setting, where both \eqref{s9} and \eqref{s11eta} (and \eqref{s10} if the~pressure can be defined) hold simultaneously, but only as the inequalities. Although these become automatically equalities if the~solution is smooth enough (see \eqref{weas2} below), in general this is unknown.

		\subsection*{State of the~art}
		
		Regarding the~existence analysis of a~viscoelastic fluid model including the~full temperature evolution, there is a~recent study \cite{BulPreprint2020}, where the~authors develop a~long-time and large-data existence theory for a~rate-type incompressible viscoelastic fluid model with stress diffusion under the~simplifying assumption that $\B=b\I$. This assumption leads to annihilation of irregular terms coming from the~objective derivative and it also simplifies the~momentum equation, where the~coupling to the~rest of the~system is realized only via temperature and elastic stress dependent viscosity. Other than that, to the~authors' best knowledge, there is no existence theory in a~setting that would be of similar generality as considered here. Thus, for the~first time, we provide an existence analysis for a~viscoelastic fluid model with a~full thermal evolution and taking into account all components of the~extra stress tensor. Moreover, the~equation for the~temperature we consider is derived from fundamental thermodynamical laws (similarly as in~\cite{BulPreprint2020}, \cite{Hron2017}, \cite{Malek2018}) and consequently, the~heating originates from both the~viscous and elastic effects. Also, we would like to point out that the~all material coefficients of the~model depend on the~temperature. Although we place some restrictions on the~growth of these coefficients, these are only asymptotic and therefore unimportant from the~point of view of physical applications. Furthermore, the~model considered here has the~property that the~evolution of the~temperature cannot be decoupled from the~rest of the~model even in the~case of constant material coefficients.
		
		Even if we confine to a~much simpler class of isothermal processes, the~existence theory there is far from being complete. Although there are several relevant global-in-time existence results for large data, in most cases, they are restricted in an essential way. For example, in~\cite{Lions2000} the~authors provide an existence theory for a~model with the~corrotational Jaumann--Zaremba derivative (the case $a=0$). This case is much easier than for the~other choices of $a$ since the~corrotational part drops out upon multiplication by any matrix that commutes with $\B$. Moreover, it seems that the~physically preferred case is $a=1$, which corresponds to the~upper convected (Oldroyd) derivative (see \cite{MaPr2018}, \cite{MaRaTu15}, \cite{MaRaTu18}, \cite{RS_2000} or \cite{RS_2004}). Then, in~\cite{Masmoudi2011}, a~proof of existence of a~weak solution to FENE-P, Giesekus and PTT viscoelastic models is outlined. In fact, it is shown there that certain defect measures of the~non-linear terms are compact. A~complete proof in the~case of two-dimensional flows of a~Giesekus fluid is given in \cite{los1}. In the~case of spherical elastic response when $\B=b\I$, we refer to~\cite{Bulicek2018} (and \cite{Bulicek2019}, \cite{Lu_2020} in the~compressible case) for an analysis of such models. In the~two-dimensional case, existence and regularity results can be found in~\cite{Constantin2012}. An~existence theory for related viscoelastic models (Peterlin class) was developed, e.g., in~\cite{Lukacova-Medvidova2017}. However, for these models, the~energy storage mechanism depends only on the~spherical part of the~extra stress, which is a~major simplification compared to our case. A~notable exception is the~thesis \cite{Kreml}, where the~author obtains a~global weak solution to an Oldroyd-like diffusive model under certain growth assumptions on the~material coefficients. However, the~overall thermodynamical compatibility of the~studied model is unclear. Furthermore, there are existence results for viscoelastic models involving various approximations that improve properties of the~system, see e.g.\ \cite{BARRETT2011} or \cite{Pokorny}. 
		
		The~article \cite{Bathory_2020} develops the~existence theory for viscoelastic diffusive Oldroyd-B or Giesekus models. This result relies on a~certain physical correction of the~energy storage mechanism away from the~stress-free state resulting at $L^2$ a~priori estimates for $\nabla \B$. Interestingly, for such models, in two dimensions, uniqueness and full regularity of weak solution is available (at least in the~spatially periodic case), see \cite{casey22}. Various modifications of the~classical Oldroyd-B model are also discussed in \cite{Chupin2018}. The~article contains also existence results that are of local nature or for small (initial) data. Local-in-time existence of regular solutions to a~viscoelastic Oldroyd-B model without diffusion was shown in~\cite{Saut1990}. It is also proved there that for small data there exists a~global in time solution. For the~steady case of a~generalized Oldroyd-B model with small and regular data, see e.g.\ \cite{bio4}.

		\section{Thermodynamical compatibility of the~model}\label{Sphys}
		
		In this section, we show the~physical consistency of the~system \eqref{s1}--\eqref{s15ic} as it follows naturally from the~elementary balance equations for mass, momentum and energy and some reasonable constitutive assumptions. The~latter can be efficiently encoded in just two scalar quantities describing how the~energy is stored and dissipated in the~material, see \cite{RS_2000} and \cite{RS_2004} for the~origins of this method. Physical justification of viscoelastic fluid models similar to ours is carried out in many works, see~\cite{Dostalik2019}, \cite{Hron2017}, \cite{Malek2018}, \cite{MaRaTu15} or \cite{MaRaTu18}.
		
		For the~rest of this section, we make an implicit assumption that all functions depend smoothly on time and space position (if not specified otherwise), with the~arguments $(t,x)$ suppressed as usual.
		
		Since the~density of the~fluid is assumed constant ($\rh=1$), the~balance of mass
		\begin{equation*}
			\dot{\rh}+\rh\di\ve=0
		\end{equation*}
		is reduced to \eqref{s6d}. Next, the~general form of the~balance equations of momentum, total energy and specific entropy is
		\begin{align}
			\dot{\ve}&=\di\T,\label{ba1}\\
			\dot{E}+\di\je_e&=\di(\T\ve),\label{ba2}\\
			\dot{\eta}+\di\je_{\eta}&=\xi,\label{ba3}
		\end{align}
		where $\T$ is the~Cauchy stress tensor and $\je_e$ and $\je_{\eta}$ are energy and entropy fluxes, respectively. Tensor $\T$ is symmetric due to the~conservation of angular momenta. Furthermore, the~balance equation for the~internal energy $e\coloneqq E-\f12|\ve|^2$ is
		\begin{equation}\label{ba4}
			\dot{e}+\di\je_e=\T\cdot\Dv,
		\end{equation}
		as follows easily from \eqref{ba1} and \eqref{ba2}.
		
		Turning to thermodynamics, we assert the~following fundamental relation (cf.~\cite[(1.8)]{Callen}) between specific entropy, internal energy and positive definite tensor~$\B$:
		\begin{equation}
			\eta=S(e,\B),\quad\text{where}\quad\partial_eS>0.\label{td}
		\end{equation}
		In this case, the~temperature $\theta$ is defined as usual by
		\begin{equation}\label{thdef}
			\f1{\theta}\coloneqq\partial_eS(e,\B).
		\end{equation}
		Taking the~material time derivative of both sides of \eqref{td} then leads to
		\begin{equation*}
			\dot{\eta}=\f1{\theta}\dot{e}+\partial_{\B}S(e,\B)\cdot\dot{\B}.
		\end{equation*}
		This in turn allows us to express the~rate of entropy production in the~general form via the~balance equations \eqref{ba3} and \eqref{ba4} as follows:
		\begin{equation}\label{xigen}
			\xi=\f1{\theta}(\T\cdot\Dv-\di\je_e)+\di\je_{\eta}+\partial_{\B}S(e,\B)\cdot\dot{\B}
		\end{equation}
		In the~next step, we make special choices of $\T$, $\je_e$, $\je_{\eta}$ and $S$ that lead to \eqref{s7}, \eqref{s9}--\eqref{s11eta} and verify, using the~above formula and also \eqref{s8}, that $\xi\geq0$.
		
		The formula for specific entropy is chosen as
		\begin{equation}\label{Schoice}
			S(e,\B)\coloneqq c_v\ln e-f(\B),
		\end{equation}
		where $c_v>0$ is the~specific heat constant and 
		\begin{equation}\label{helm}
			f(\B)\coloneqq \mu(\tr\B-d-\ln\det\B),\quad\mu>0,
		\end{equation}
		is a~function that characterizes the~elastic properties of the~fluid. If $\mu=0$ or $\B=\I$, then  \eqref{Schoice} reduces to the~classical Navier-Stokes-Fourier model, where one has
		\begin{equation}\label{ethrel}
			e=c_v\theta.
		\end{equation}
		Note that as long as $\mu$ does not depend on temperature (which is the~case in this work), this property actually remains valid even with our generalized assumption \eqref{Schoice}, as is immediately obvious from \eqref{Schoice}, \eqref{thdef} and \eqref{td}.
		
		Next, comparing \eqref{ba1}, \eqref{ba4} and \eqref{ba3} with \eqref{s7}, \eqref{s9} and \eqref{s11eta}, respectively, the~constitutive choices for the~fluxes are evidently as follows:
		\begin{align}
			\T&\coloneqq-\pr\I+2\nu(\theta)\Dv+2a\mu\theta\B,\label{flux1}\\
			\je_e&\coloneqq -\kappa(\theta)\nn\theta,\label{flux2}\\
			\je_{\eta}&\coloneqq-\kappa(\theta)\nn\ln\theta+\la(\theta)\nn f(\B),\label{flux3}
		\end{align}
		where $\nu(\theta)>0$, $\kappa(\theta)>0$ and $\la(\theta)>0$ are the~kinematic viscosity, thermal conductivity and stress diffusion coefficients, respectively, and parameter $a$ arises from the~definition of the~objective tensorial time derivative \eqref{objder2}.
		
		Finally, plugging the~relations \eqref{flux1}--\eqref{flux3} and \eqref{JS2} into \eqref{xigen} and taking advantage of the~identities
		\begin{align}
			&-\pr\I\cdot\Dv=-\pr\di\ve=0,\nonumber\\
			&\partial_{\B}S(e,\B)=-f'(\B)=-\mu(\I-\B^{-1})\qquad\text{(see \eqref{IJ1} below)},\label{calc1}\\
			&(\I-\B^{-1})\cdot(\Wv\B-\B\Wv)=(\I-\B^{-1})\B\cdot\Wv-\B(\I-\B^{-1})\cdot\Wv=0,\nonumber\\
			&(\I-\B^{-1})\cdot(\Dv\B+\B\Dv)=2(\B-\I)\cdot\Dv=2\B\cdot\Dv,\nonumber\\
			&\nn(\I-\B^{-1})\cdot\nn\B=\B^{-1}\nn\B\B^{-1}\cdot\nn\B=|\B^{-\f12}\nn\B\B^{-\f12}|^2\label{calc2}
		\end{align}	
		(here we used that $\B$ is a~symmetric positive definite matrix, which follows from the~same property of $\B_0$ as we shall see later) leads to
		\begin{align*}
			\xi&=\f1{\theta}(2\nu(\theta)|\Dv|^2+2a\mu\theta\B\cdot\Dv+\di(\kappa(\theta)\nn\theta))+\di(-\kappa(\theta)\nn\ln\theta+\la(\theta)\nn f(\B))\\
			&\qquad-\mu(\I-\B^{-1})\cdot(\Wv\B-\B\Wv+a(\Dv\B+\B\Dv)-P(\theta,\B)+\di(\la(\theta)\nn\B))\\
			&=\f{2\nu(\theta)}{\theta}|\Dv|^2+\kappa(\theta)|\nn\ln\theta|^2+\mu(\I-\B^{-1})\cdot P(\theta,\B)+\mu\la(\theta)|\B^{-\f12}\nn\B\B^{-\f12}|^2,
		\end{align*}
		which validates \eqref{s5} and verifies the~physical consistency of the~model. Moreover, from the~last expression, it is evident that $\xi\geq0$ whenever $(\I-\B^{-1})\cdot P(\theta,\B)\geq0$, in which case the~second law of thermodynamics is always fulfilled.

		\section{Weak formulation \& main result}\label{Smat}
		
		In this section, we focus on mathematical properties of the~problem \eqref{s1}--\eqref{s15ic}. After we introduce the~necessary notation, we formally derive apriori estimates that clarify the~imposed restrictions on model parameters. They also indicate the~functions spaces in which the~long-time and large-data existence theory can be established. Then we provide the~definition of weak solution to \eqref{s1}--\eqref{s15ic} and formulate the~main result of the~paper.
		
		\subsection*{Notation and function spaces}		
		
		The sets of symmetric, positive definite and positive semi-definite matrices are defined as follows:
		\begin{align*}
			\Sym&\coloneqq\{\A\in\R^{d\times d}:\A=\A^T\},\\
			\PD&\coloneqq\{\A\in\Sym:\A\x\cdot\x>0\text{ for all }0\neq\x\in\R^d\},\\
			\R^{d\times d}_{\geq0}&\coloneqq\{\A\in\Sym:\A\x\cdot\x\geq0\text{ for all }\x\in\R^d\}.
		\end{align*}
		If $d=1$, we set $\R_{>0}:=\R^{1\times 1}_{>0}=(0,\infty)$ and $\R_{\geq0}\coloneqq\R^{1\times 1}_{\geq0}=[0,\infty)$.  We use the~symbol ``$\cdot$" to denote the~standard inner product in any multi-dimensional space, while the~symbol ``$\otimes$" denotes the~outer product. Further, the~symbol ``$|\cdot|$" can be applied to either scalars, vectors or matrices, meaning always the~Euclidean (or Frobenius) norm. The~functions of matrices, such as matrix real powers, matrix logarithm and matrix exponential, are understood in the~standard way, using the~spectral decomposition for symmetric matrices, for instance. For various products of matrix-valued functions, we use an intuitive index-free notation. One can follow the~rule that $\nn$ can only be contracted with another vector (or one-form), but never with columns or rows of some matrix, so for example:  $\nn\A\cdot\nn\B=\sum_{i,j,k}\partial_i\A_{jk}\partial_i\B_{jk}$ or $(\ve\otimes\A)\cdot\nn\B=\sum_{i,j,k}\ve_i\A_{jk}\partial_i\B_{jk}$ or $|\A\nn\B\mathbb{C}|^2=\sum_{i,j,k}\big(\sum_{l,m}\A_{il}\partial_k\B_{lm}\mathbb{C}_{mj}\big)^2$.

		If not stated otherwise, the~set $\Omega\subset\R^d$ is an open bounded set with a~Lipschitz boundary (i.e.\ of the~class $\mathcal{C}^{0,1}$) in the~sense of \cite[Sect.\ 2.1.1]{Necas}. Let $\OO\subset\R^m$ be an open bounded set (such as $(0,T)$, $\Omega$ or $\Q$) and $V$ be a~subset of an Euclidean space. The~symbol $(L^p(\OO;V),\norm{\cdot}_{L^p(\OO;V)})$ denotes the~Lebesgue space of functions $u:\OO\to V$.
		The~standard inner products in $L^2(\OO;V)$ and also in $L^2(\partial\Omega;V)$ are denoted as $(\cdot,\cdot)_{\OO}$ and $(\cdot,\cdot)_{\partial\Omega}$, respectively. In the~special case that $\OO=\Omega$, we write just $\norm{\cdot}_p$ instead of $\norm{\cdot}_{L^p(\Omega;V)}$ and $(\cdot,\cdot)$ instead of $(\cdot,\cdot)_{\Omega}$. 
		
		The symbol $(W^{k,p}(\Omega;V),\norm{\cdot}_{k,p})$, $1\leq p\leq \infty$, $k\in\N$, is used to denote the~Sobolev spaces with their standard norm considered over the~set $\Omega$. If $p>1$, we set $W^{-k,p}(\Omega;V)\coloneqq(W^{k,p'}(\Omega;V))^*$, where $p'\coloneqq p/(p-1)$, $k\in\N$, and the~star symbol ``$^*$'' denotes the~topological (continuous) dual space. For vector-valued functions, we introduce the~following subspaces:
		\begin{align*}
			W^{k,p}_{\n}&\coloneqq\{\uu\in W^{k,p}(\Omega;\R^d):\uu\cdot\n=0\},\quad k\in\N,\quad p<\infty,\\
			W^{k,p}_{\n,\di}&\coloneqq\{\uu\in W^{k,p}_{\n}:\di\uu=0\},\quad k\in\N,\quad p<\infty,\\
			W^{-k,2}_{\n,\di}&\coloneqq(W^{k,2}_{\n,\di})^*,\quad k\in\N,\\
			L^2_{\n,\di}&\coloneqq\overline{W^{1,2}_{\n,\di}}^{\norm{\cdot}_2}.
		\end{align*}
		The expression $\uu\cdot\n$ is understood as a~trace of a~Sobolev function, for which we do not use any special notation. The~meaning of the~duality pairing $\langle \cdot, \cdot \rangle$ is always understandable in the~given~context. 
		
		Let $X$ be a~Banach space. The~Bochner spaces $L^p(0,T;X)$ with $1\leq p \leq\infty$ consist of strongly measurable mappings $u:[0,T]\to X$ for which the~norm
		\begin{equation*}
			\norm{u}_{L^p(0,T;X)}\coloneqq\Bigg\{\begin{aligned}\left(\int_0^T\norm{u}^p_X\right)^{\f1p}&\qquad\text{if } 1\leq p<\infty,\\
				\esssup_{(0,T)}\norm{u}_X&\qquad\text{if } p=\infty,\end{aligned}
		\end{equation*}
		is finite. If $X=L^q(\Omega;V)$ or $X=W^{k,q}(\Omega;V)$, with $1\leq q\leq \infty$, $n\in\N$, we use the~abbreviations $\norm{\cdot}_{L^pL^q}$ or $\norm{\cdot}_{L^pW^{k,q}}$, respectively, for the~corresponding norms. Next, the~space of weakly continuous functions is defined as
		\begin{align*}
			\mathcal{C}_w([0,T];X)&\coloneqq\big\{u\in L^{\infty}(0,T;X):\text{the function }\langle g,u\rangle\text{ is continuous in }[0,T]\\
			&\hskip7.5cm\text{ for every }g\in X^*\big\},
		\end{align*}
		whereas the~standard space of continuous $X$-valued functions on $[0,T]$ is denoted by $\CO([0,T];X)$ and equipped with the~norm
		\begin{equation*}
			\norm{u}_{\CO([0,T];X)}\coloneqq\sup_{t\in[0,T]}\norm{u(t)}_X.
		\end{equation*}
		In addition, if $X$ is separable and reflexive, we define two more spaces. First, the~space of $X^*$-valued Radon measures on $[0,T]$ is defined as
		\begin{equation*}
			\mathcal{M}([0,T];X^*)\coloneqq (\mathcal{C}([0,T];X))^*.
		\end{equation*} 
		Then, we set
		\begin{align*}
			BV([0,T];X^*)&\coloneqq\big\{u\in L^{\infty}(0,T;X^*), \; \partial_t u \in \mathcal{M}([0,T]; X^*)\big\}
		\end{align*}
		to be the~space of functions having $X^*$-valued bounded variation with respect to the~time variable. Note that if $u\in BV([0,T];X^*)$ then it makes sense to define value from left and from right at any point $t$, i.e., there exist
		\begin{equation*}
			u(t_+)\coloneqq \lim_{\tau\to t_+}u(\tau) \, \textrm{ for any } t\in[0,T) \quad \textrm{ and } \quad 
			u(t_{-})\coloneqq\lim_{\tau \to t_{-}}u(\tau) \, \textrm {for any } t\in(0,T],
		\end{equation*}
		where the~limits are considered in the~strong topology of $X^*$. For properties of $BV$ mappings in Bochner spaces, we refer e.g. to~\cite{HePaRe19}.
		
		\subsection*{Assumptions on material coefficients}
		
		The mathematical properties of the~system \eqref{s1}--\eqref{s15ic} depend crucially on the~behaviour of the~material coefficients, which we now specify. We will require that
		\begin{equation}\label{Acont}
			\nu,\kappa,\la,P\quad\text{are continous functions in }\R,\R,\R\text{ and }\R\times\Sym\text{, respectively,}
		\end{equation}
		and there are numbers $q,r>0$, $C,C_{\alpha}>0$ and $\omega_{P}>0$, such that, for all $s\in\R$, the~following conditions hold:
		{\allowdisplaybreaks\begin{align}
				C^{-1}&\leq\nu(s)\leq C,&&\label{Anu}\\
				C^{-1}(1+s^r)&\leq \kappa(s)\leq C(1+s^r),&&\label{Akap}\\
				C^{-1}&\leq \la(s)\leq C,&&\label{Ala}\\
				\PP(s,\A)&=\PP(s,\A)^T&&\text{for all }\A\in\Sym,\label{APkom}\\
				|\PP(s,\A)|&\leq C(1+|\A|^{q+1})&&\text{for all }\A\in\Sym,\label{APbou}\\
				\PP(s,\A)\cdot\A^{\alpha}&\geq C_{\alpha}|\A|^{q+1+\alpha}-C&&\text{for all }\alpha>0\text{ and }\A\in\PD,\label{APcoer}\\
				\PP(s,\A)\cdot\I&\geq-C&&\text{for all }\A\in\PD,\label{APcoer2}\\
				\PP(s,\A)\cdot(\I-\A^{-1})&\geq 0&&\text{for all }\A\in\PD,\label{APp}\\
				\PP(s,\A+\omega_P\I)\x\cdot\x&\leq0&&\text{for all }\A\in\Sym\text{ and }\x\in\R^d\nonumber\\
				&&&\qquad\text{ such that }\A\x\cdot\x\leq0.	\label{APpd}
		\end{align}}
		Assumption \eqref{Anu} is quite standard for fluids. Restriction \eqref{Akap} means that $\kappa$ is a~bounded function near zero and has an~$r$-growth near infinity. Assumption \eqref{Ala} is chosen just for simplicity. Condition \eqref{APkom} is necessary for validity of \eqref{s8}. Assumptions \eqref{APbou} and \eqref{APcoer} mean that $\PP(\cdot,\A)$ behaves asymptotically as $\A^{q+1}$, which is a~crucial information to get sufficient a~priori estimates. Condition \eqref{APcoer2} simplifies the~analysis at one step and means basically that the~leading order term of $\PP(\cdot,\A)$ appears with the~positive sign, compare e.g.\ with the~Oldroyd-B and Giesekus model, where $\PP(\cdot,\A)=\A-\I$ and $\PP(\cdot,\A)=\A^2-\A$, respectively. Property \eqref{APp} is important for the~validity of the~second law of thermodynamics in our model. Again, both Oldroyd-B and Giesekus models fulfill this requirement. Finally, the~assumption \eqref{APpd} restricts the~behaviour of $\PP(\cdot,\A)$ when $\A$ is not positive definite or if its eigenvalues are too small. We remark that this technical condition concerns the~case $s\leq 0$ or $\A\in\R^{d\times d}\setminus\PD$ that actually never arises in the~studied problem. An explicit example of function $\PP$ satisfying \eqref{APkom}--\eqref{APpd} would be
		\begin{equation*}
			\PP(s,\A)=\delta(s)(1+|\A-\I|^{q-\beta})\A^{\beta}(\A-\I),
		\end{equation*}
		where $\delta$ is a~continuous positive real function and $\beta\in[0,q]$. Indeed, note that, for any $\A\in\PD$, we can write
		\begin{align*}
			\A^{\beta}(\A-\I)&\cdot(\I-\A^{-1})=\A^{\f{\beta}2}\A^{\f{\beta}2}(\A^{\f12}-\A^{-\f12})\A^{\f12}\cdot(\I-\A^{-1})\\
			&=\A^{\f{\beta}2}(\A^{\f12}-\A^{-\f12})\cdot\A^{\f{\beta}2}(\I-\A^{-1})\A^{\f12}=|\A^{\f{\beta}2}(\A^{\f12}-\A^{-\f12})|^2\geq0,
		\end{align*}
		implying \eqref{APp}. The~properties \eqref{APbou}, \eqref{APcoer} and \eqref{APcoer2} follow easily from \eqref{I8} in Appendix. Finally, we claim that \eqref{APpd} holds with $\omega_P=1$. Indeed, let $0\neq\x\in\R^d$ be an eigenvector of $\A\in\Sym$, for which $\la\coloneqq\A\x\cdot\x/|\x|^2\leq0$. If $\A+\I\not\in\PD$ then we can redefine $\PP(\cdot,\A+\I)$ as needed. Otherwise, we have $\A+\I\in\PD$, and thus $\la>-1$ and we can write
		\begin{align*}
			\PP(s,\A+\I)\x\cdot\x&=\delta(s)(1+|\A|^{q-\beta})(\A+\I)^{\beta}\A\x\cdot\x\\
			&=\delta(s)(1+|\A|^{q-\beta})(\la+1)^{\beta}\la|\x|^2\leq0.
		\end{align*}
		
		
		\subsection*{Conditions on \texorpdfstring{$q$}{q} and \texorpdfstring{$r$}{r}.}
		
		To make sure that the~individual terms appearing in the~weak formulation of the~governing equations (defined below) are well defined, we need to restrict the~parameters $q$ and $r$ by the~conditions
		\begin{align}\label{A0}
			r&>1-\f2d \quad \textrm{ and } \quad q>1+\frac{2}{r-1+\frac{2}{d}};
		\end{align}
		we recall that $d\geq2$ is the~dimension of the~domain $\Omega$.

		Condition \eqref{A0} is sufficient to define every term of the~system \eqref{s1}--\eqref{s15ic} in a~weak sense, with the~exception of \eqref{s10}, which needs additional technical assumptions due to the~presence of pressure (see the~second part of Theorem~\ref{LG} below). As such, condition \eqref{A0} is actually sufficient for the~existence of a~weak solution, which is the~content of our main result.
		

		%
		
		By imposing \eqref{A0}, we place some restrictions on the~coefficients of the~model which may not agree with experimental measurements. Note, however, that \eqref{Akap}, \eqref{APbou} and \eqref{APcoer} restrict only the~asymptotic behaviour of the~coefficients. For example, any continuous function $\kappa$ defined on some interval $(\theta_0,\theta_1)$, $0<\theta_0<\theta_1<\infty$, can be modified in a~neighbourhood of $0$ and $\infty$ so that \eqref{Akap} holds. The~interval $(\theta_0,\theta_1)$ may represent the~temperature range for which the~model we are considering makes sense. When the~fluid starts to freeze or boil, then we are clearly outside this range and it makes no sense to prescribe the~coefficients $\nu$, $\kappa$, $\delta$ and $\la$ there. On the~other hand, it is unclear whether one can deduce some absolute bounds for the~temperature, besides $\theta>0$, using only the~information that is encoded in the~system. Thus, purely for mathematical reasons, we have to assume that these material coefficients are defined in some way also outside $(\theta_0,\theta_1)$. A~similar remark applies also for the~other coefficients. For example, if $|\A|$ is too large, any realistic material eventually breaks down. Thus, we may set $\PP(\cdot,\A)=\A-\I$, $|\A|\in[0,M)$, where $M$ is large (to mimic the~Oldroyd-B model, for example) and then extend this function continuously so that \eqref{APcoer} holds with some large $q$, see \eqref{pepa1}. 
		
		\subsection*{A priori estimates}

		Let us now the~motivate the~definition of the~weak solution to \eqref{s1}--\eqref{s11eta} by an informal derivation of the~available a~priori estimates. This clarifies the~need for \eqref{A0} and highlights the~main idea of the~existence proof. The~starting point are the~assumptions on the~data:
		\begin{equation}\label{data}
			E_0\in L^1(\Omega;\R_{\geq0}),\;\eta_0\in L^1(\Omega;\R),\;\B_0\in L^q(\Omega;\PD),\;\fe\in L^2(\Q;\R^d).
		\end{equation}
		In addition, we may suppose that
		\begin{equation}\label{psd}
			\theta\geq0\quad\text{and}\quad\B\x\cdot\x\geq0\quad\text{for all}\quad\x\in\R^d,
		\end{equation}
		which is due to a~suitable construction of the~solution (cf.~\eqref{neup} below).	
		
		In what follows, the~basic relations \eqref{s3}--\eqref{s5} and also \eqref{helm} will be used without further reference. Moreover, the~symbol $C$ will be used to denote a~positive constant that can change from line to line and can depend only on the~data, domain $\Omega$, time $T>0$ and other constants appearing in \eqref{Anu}--\eqref{APpd}.
		
		Integrating \eqref{s10} over $\Omega$ and applying the~boundary conditions \eqref{s12bc} and \eqref{s14bc} drops the~divergence terms, which, together with Young's inequality and $\eqref{psd}_1$, leads to
		\begin{equation*}
			\frac{\dd{}}{\dd{t}}\ii E=\ii\fe\cdot\ve\leq\f12\ii|\fe|^2+\ii E.
		\end{equation*}
		Hence, using $\eqref{data}_1$, we see that $E\in L^{\infty}(0,T;L^1(\Omega;\R))$, therefore also
		\begin{equation}
			\theta\in L^{\infty}(0,T;L^1(\Omega;\R_{\geq0}))\quad\text{and}\quad\ve\in L^{\infty}(0,T;L^2(\Omega;\R^d)).\label{vthe}
		\end{equation}
		
		Next, integrating the~entropy inequality~\eqref{s11eta}, and again applying the~boundary conditions in the~divergence terms, gives
		\begin{equation*}
			\f{\dd{}}{\dd{t}}\ii\eta(t)\geq0.
		\end{equation*}
		Applying $\eqref{data}_2$ and $\eqref{psd}_2$ ($\tr\B\geq0$, to be precise), the~last inequality yields
		\begin{equation*}
			\ii(\ln\theta(t)+\ln\det\B(t))>-C,
		\end{equation*}
		which is a~very important inequality as it ensures that $\theta>0$ and $\B$ is positive definite almost everywhere. Although one also gets $\xi\in L^1(0,T;L^1(\Omega))$ after integrating \eqref{s11eta} and using $\eqref{vthe}_1$, this information turns out to be too weak. Instead, we can get better estimates directly from \eqref{s8} and \eqref{s9}. 
		
		Due to the~positive definiteness of $\B$, the~equation~\eqref{s8} can be tested by the~matrix power $\B^{q-1}$. (Though here one can also use $|\B|^{q-2}\B$ since $q\geq1$ and the~stress diffusion term is actually not important for the~estimate itself.)  Then, using \eqref{Ala}, \eqref{APcoer}, Young's inequality and Lemma~\ref{PDalg} below, we eventually get
		\begin{equation}\label{Best0}
			\f{\dd{}}{\dd{t}}\ii\tr\B^{q}+\ii|\B|^{2q}+\ii|\nn\B^{\f{q}2}|^2\leq C\ii|\B|^{q}|\Dv|+C\leq C\ii|\Dv|^{2}+C.
		\end{equation}
		Hence, integrating over $(0,T)$ and thanks to $\eqref{data}_3$, we have
		\begin{equation}\label{Best}
			\norm{\B}_{L^{2q}L^{2q}}\leq C\norm{\Dv}_{L^2L^2}^{\f1q}+C.
		\end{equation}
		Clearly, we need control over $\Dv$, but it has to be obtained differently than for the~Navier--Stokes--Fourier systems, as we pointed out in the~introduction. 
		
		
		Thanks to $\theta>0$, we may test \eqref{s9}  by the~function $-\theta^{-\beta}$ with~$\beta\ge 0$. Eventually, applying $\eqref{vthe}_1$, \eqref{Anu} and \eqref{Akap}, this leads to the~estimate
		\begin{equation}\label{keys}
			\beta\int_{\Q}\theta^{r-\beta-1}|\nn\theta|^2+\int_{\Q}|\Dv|^2\leq C\int_{\Q} \theta |\B||\Dv|+C.
		\end{equation}
		Using \eqref{vthe}, \eqref{Best} and the~H\"{o}lder inequality, the~above inequality gives
		\begin{equation}\label{keys2}
			\begin{split}
				\beta \|\theta^{\frac{r-\beta+1}{2}}\|^2_{L^2W^{1,2}}+\|\Dv\|_{L^2L^2}^2&\leq C\|\theta\|_{L^{2q'}L^{2q'}} \|\B\|_{L^{2q}L^{2q}}\|\Dv\|_{L^2L^2}+C\\
				&\le C\|\theta\|^{2q'}_{L^{2q'}L^{2q'}} + \f12\|\Dv\|^2_{L^2L^2}+C.
			\end{split}
		\end{equation}
		The last term is absorbed by the~left-hand side and for the~first term we use the~interpolation inequality
		\begin{equation*}
			\|\theta\|^{2q'}_{2q'}\le \|\theta\|_{1}^{2q'-\frac{d(r-\beta+1)(2q' -1)}{d(r-\beta)+2}} \|\theta\|_{\frac{d(r-\beta+1)}{d-2}}^{\frac{d(r-\beta+1)(2q' -1)}{d(r-\beta)+2}}
		\end{equation*}
		and \eqref{vthe} to deduce
		\begin{equation}\label{keys3}
			\begin{split}
				&\beta \|\theta^{\frac{r-\beta+1}{2}}\|^2_{L^2W^{1,2}}+\|\Dv\|_{L^2L^2}^2\le C\int_0^T \|\theta\|_{\frac{d(r-\beta+1)}{d-2}}^{\frac{d(r-\beta+1)(2q' -1)}{d(r-\beta)+2}}+C\\
				&= C\int_0^T \|\theta^{\frac{r-\beta+1}{2}}\|_{\frac{2d}{d-2}}^{\frac{2d(2q' -1)}{d(r-\beta)+2}}+C\le  C\int_0^T \|\theta^{\frac{r-\beta+1}{2}}\|_{1,2}^{\frac{2d(2q' -1)}{d(r-\beta)+2}}+C.
			\end{split}
		\end{equation}
		Hence, if
		\begin{equation}
			\label{beta11}
			\frac{2d(2q' -1)}{d(r-\beta)+2}<2,
		\end{equation}
		the first term on the~right-hand side can be absorbed by the~left-hand side and thus, we get
		\begin{equation}\label{keys4}
			\begin{split}
				\beta \|\theta^{\frac{r-\beta+1}{2}}\|^2_{L^2W^{1,2}}+\|\Dv\|_{L^2L^2}^2+\|\B\|^{2q}_{L^{2q}L^{2q}}&\leq C.
			\end{split}
		\end{equation}
		Finally, the~inequality~\eqref{beta11} can be made true by choosing $\beta>0$ sufficiently small if and only if $q$ and $r$ satisfy \eqref{A0}. Note that, in this case, we were able to estimate the~right-hand side of \eqref{keys}, i.e., the~``critical'' term $\theta\B\cdot\Dv$ appearing in \eqref{s9}. It is easy to verify, using estimates \eqref{vthe} and \eqref{keys4} that all the~other nonlinear terms appearing in the~system \eqref{s1}--\eqref{s11eta} are integrable as well.
		
		\subsection*{Definition of weak solution}
		
		Motivated by the~above estimates, we now deliver the~exact definition of a~weak solution to \eqref{s1}--\eqref{s15ic}.
		
		\begin{definition}\label{Dws}
			Let $T>0$ and let $\Omega\subset\R^d$, $d\geq2$, be a~Lipschitz domain. Assume that the~constants $a\in\R$, $\alpha\geq0$, $c_v,\mu>0$ and the~functions $\nu,\kappa,\la,\PP$ fulfil the~assumptions \eqref{Acont}--\eqref{APp} with the~parameters $q$ and $r$ satisfying \eqref{A0}  and let $m\coloneqq\min\{2,\f{4q}{q+2}\}$.  Suppose that the~initial data satisfy
			\begin{align}\label{init}
				&\ve_0\in L^2_{\n,\di}(\Omega;\R^d),\quad\B_0\in L^{q}(\Omega;\R^{d\times d}_{>0}),\quad \theta_0\in L^1(\Omega;\R_{>0}),\\
				&\eta_0\coloneqq c_v\ln\theta_0-f(\B_0)\in L^1(\Omega;\R),\label{initln}
			\end{align}
			where $f$ is given by \eqref{helm},  and that
			\begin{equation}
				\fe\in L^2(\Q;\R^d).
			\end{equation}
			
			Then, we say that the~sextuplet $(\ve,\B,\theta,e,E,\eta):\Q\to\R^d\times\PD\times\R_{>0}\times\R_{>0}\times\R_{>0}\times\R$ is a~weak solution of the~initial-boundary value problem \eqref{s1}--\eqref{s15ic} if all of the~following conditions {\scshape (i)--(iv)} are satisfied:
			
			{\scshape(i)} The~functions $\ve$, $\B$, $\theta$ and $\eta$ fulfil the~properties
			{\allowdisplaybreaks\begin{align}
					\ve&\in L^{2}(0,T;W^{1,2}_{\n,\di})\cap \CO_w([0,T];L^2(\Omega;\R^d)),\label{DR1}\\
					\partial_t\ve&\in L^{\f{d+2}{d}}(0,T;W^{-1,\f{d+2}{d}}_{\n,\di}),\label{DRdv}\\
					\B&\in L^{ m}(0,T;W^{1, m}(\Omega;\PD))\cap \CO_w([0,T];L^{q}(\Omega;\PD)),\label{DRB}\\
					\B&\in L^{2q}(\Q;\PD),\label{DRBint}\\
					\B^{\frac{q}2}&\in L^2(0,T;W^{1,2}(\Omega;\PD)),\label{DRB2}\\
					\partial_t\B&\in \big(L^{2q'}(0,T;W^{1,2q'}(\Omega;\mathbb{R}^{d \times d}))
					\big)^*,\label{DRdtB}\\
					\B^{-\f12}\nn\B\B^{-\f12}&\in L^2(\Q;\R^d\times\Sym),\label{DRnnB}\\
					\ln\det\B&\in L^2(0,T;W^{1,2}(\Omega;\R))\cap L^{\infty}(0,T;L^1(\Omega;\R)),\label{DRnnlnB}\\
					\theta&\in L^{\infty}(0,T;L^1(\Omega;\R_{>0}))\cap L^{r+\f2d+1-\eps}(\Q;\R_{>0}),\label{DRth}\\
					\theta^{\frac{r+1-\eps}{2}}&\in L^2(0,T;W^{1,2}(\Omega;\R_{>0})),\label{DRthW}\\
					\ln\theta&\in L^2(0,T;W^{1,2}(\Omega;\R))\cap L^{\infty}(0,T;L^1(\Omega;\R)),\label{DRlnt}\\
					\eta&\in L^{ m}(0,T;W^{1, m}(\Omega;\R))\cap L^{\infty}(0,T;L^1(\Omega;\R))\label{DReta}
			\end{align}}
			for every $\eps\in (0,1)$. 
			
			{\scshape(ii)} The~ relations \eqref{s1}--\eqref{s5}  hold almost everywhere in $\Q$.
			
			{\scshape(iii)} Equations \eqref{s7}--\eqref{s11eta} are satisfied in the~following sense:
			{\allowdisplaybreaks\begin{align}
					&\begin{aligned}\label{Qv}				&\scal{\partial_t\ve,\fit}-(\ve\otimes\ve,\nn\fit)_{\Q}+(\Sb,\nn\fit)_{\Q}
						+(\alpha\ve_{\tau},\fit_{\tau})_{\SigT}=(\fe,\fit)_{\Q}\\
						&\text{for all}\;\;\fit\in L^{\f{d}2+1}(0,T;W_{\n,\di}^{1,\f{d}2+1}),
					\end{aligned}\\
					&\begin{aligned}\label{QB}
						&\scal{\partial_t\B,\A}-(\B\otimes\ve,\nn\A)_{\Q}+(\PP(\theta,\B),\A)_{\Q}+(\la(\theta)\nn\B,\nn\A)_{\Q}\\
						&\quad=((a\Dv+\Wv)\B,\A+\A^T)_{\Q}\\
						&\text{for all}\;\;\A\in L^{2q'}(0,T;W^{1,2q'}(\Omega;\R^{d\times d})),  
					\end{aligned}\\		&\begin{aligned}\label{Qthe}
						&-(c_v\theta_0,\phi\varphi(0))-(c_v\theta,\phi\partial_t\varphi)_{\Q}-(c_v\theta\ve,\nn\phi\varphi)_{\Q}
						+(\kappa(\theta)\nn\theta,\nn\phi\varphi)_{\Q}\\
						&\quad\geq(\Sb\cdot\Dv,\phi\varphi)_{\Q}\\
						&\text{for all}\;\;\varphi\in W^{1,\infty}((0,T);\R_{\geq0}),\,\varphi(T)=0,\;\;\text{and all}\;\;\phi\in W^{1,\infty}(\Omega;\R_{\geq0}),
					\end{aligned}\\	
					&\begin{aligned}
						&-(\eta_0,\phi\varphi(0))-(\eta,\phi\partial_t\varphi)_{\Q}-(\eta\ve,\nn\phi\varphi)_{\Q}\\ &\quad+\big(\kappa(\theta)\nn\ln\theta-\la(\theta)\nn f(\B),\nn\phi\varphi\big)_{\Q}\geq(\xi,\phi\varphi)_{\Q}\\
						&\text{for all}\;\;\varphi\in W^{1,\infty}((0,T);\R_{\geq0}),\,\varphi(T)=0,\;\;\text{and all}\;\;\phi\in W^{1,\infty}(\Omega;\R_{\geq0}),\label{Qent}
					\end{aligned}\\		
					&\f{\dd{}}{\dd{t}}\ii E + \alpha \int_{\partial \Omega} |\ve|^2=\ii\fe\cdot\ve\;\;\text{a.e.\ in }[0,T].\label{QE}
			\end{align}}
			
			{\scshape(iv)} The~initial data are attained in the~following way:
			\begin{align}
				&\lim_{t\to0+}\norm{\ve(t)-\ve_0}_2=0,\label{Qicv}\\
				&\lim_{t\to0+}\norm{\B(t)-\B_0}_{q-\eps}=0\quad\text{for every}\quad\eps\in(0,q-1],\label{QicB}\\
				&\lim_{t\to0+}\norm{\theta(t)-\theta_0}_1=0,\label{Qict}\\
				&\liminf_{t\to0+}\,(\eta(t),\phi)\geq(\eta_0,\phi)\quad\text{for all}\quad 0\leq\phi\in W^{1,\infty}(\Omega).\label{Qicet}
			\end{align}
		\end{definition}
		
		
		
		With this definition in hand, we now formulate our main result. 
		
		\begin{theorem}\label{LG}
			Suppose that all the~assumptions of Definition~\ref{Dws} are fulfilled. Then, there exists a~weak solution of the~system \eqref{s1}--\eqref{s15ic} in the~sense of Definition~\ref{Dws}. 
			
			In addition, if $d\leq 3$ and $\Omega\in\mathcal{C}^{1,1}$, then there is a~pressure $\pr\in L^{\f{d+2}{d}}(\Q;\R)$ such that the~local balance of total energy \eqref{s10} holds in the~sense:
			\begin{equation}\label{QEloc}\begin{aligned}
					&-(\tfrac12|\ve_0|^2+c_v\theta_0,\phi\varphi(0))-(E,\phi\partial_t\varphi)_{\Q}+(\alpha|\ve_{\tau}|^2,\phi\varphi)_{\SigT}+(\kappa(\theta)\nn\theta,\nn\phi\varphi)_{\Q}\\
					&\hskip4cm=(E\ve+\pr\ve-\Sb\ve,\nn\phi\varphi)_{\Q}\\
					&\text{for all}\;\;\varphi\in W^{1,\infty}((0,T);\R),\;\;\varphi(T)=0,\;\;\text{and every}\;\;\phi\in W^{1,\infty}(\Omega;\R)
			\end{aligned}\end{equation}
			and also \eqref{Qv} can be generalized to
			\begin{equation}\begin{aligned}\label{Qvp}				&\scal{\partial_t\ve,\fit}-(\ve\otimes\ve,\nn\fit)_{\Q}+(-p\I+\Sb,\nn\fit)_{\Q}
					+(\alpha\ve_{\tau},\fit_{\tau})_{\SigT}=(\fe,\fit)_{\Q}\\
					&\hskip2cm\text{for all}\quad\fit\in L^{\f{d}2+1}(0,T;W_{\n}^{1,\f{d}2+1}).
			\end{aligned}\end{equation}
		\end{theorem}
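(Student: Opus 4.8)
The plan is to prove Theorem~\ref{LG} by a multi-layer approximation scheme, exploiting the a~priori estimates already derived in Section~\ref{Smat}. The construction proceeds in the following order: (a) regularize the problem by adding artificial dissipation and cut-offs that make the $\B$-equation parabolic with good sign, truncate the convective and $\Wv\B$-type terms, and replace $\PP$ by a bounded Lipschitz approximation; (b) solve the regularized system by a Galerkin method for $(\ve,\B)$ coupled with a fixed-point / monotone-operator argument for $\theta$, using that for fixed smooth data the temperature equation \eqref{s9} is a nondegenerate quasilinear parabolic equation; (c) pass to the limit in the regularization parameters one at a time, each time re-deriving the uniform bounds \eqref{vthe}, \eqref{Best}, \eqref{keys4} (now rigorously, since at the approximate level the test functions $\B^{q-1}$ and $-\theta^{-\beta}$ are admissible) together with the entropy bound coming from testing \eqref{s11eta} by $1$; (d) upgrade to strong/a.e.\ convergence where needed and identify the nonlinear limits. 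Throughout, the positive-definiteness and positivity $\B\in\PD$, $\theta>0$ are obtained from the entropy inequality exactly as in the formal computation: $\int_\Omega(\ln\theta+\ln\det\B)\geq -C$ together with the pointwise constructions guaranteed by \eqref{psd}, \eqref{APpd} (the role of $\omega_P$ and assumption \eqref{APpd} is precisely to keep the approximate $\B$ from losing positivity).

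**Key analytic steps.** First I would establish the energy identity \eqref{QE} at the approximate level by testing the momentum balance by $\ve$ and the internal-energy balance by $1$ and adding; the coupling term $2a\mu\theta\B\cdot\Dv$ cancels between the two, which is the thermodynamic consistency already verified in Section~\ref{Sphys}. This gives \eqref{vthe}. Next, testing \eqref{s8} by $\B^{q-1}$ (legitimate after regularization) and using \eqref{APcoer}, Lemma~\ref{PDalg} and \eqref{Ala} yields \eqref{Best0}--\eqref{Best}; testing \eqref{s9} by $-\theta^{-\beta}$ and using \eqref{Anu}, \eqref{Akap} yields \eqref{keys}. The crucial closing step is the chain \eqref{keys2}--\eqref{keys4}: Hölder in space-time, the Gagliardo–Nirenberg/Sobolev interpolation of $\theta^{(r-\beta+1)/2}$ between $L^1$ and $L^{2d/(d-2)}$, and absorption into the left-hand side, which works exactly when \eqref{beta11} holds for some small $\beta>0$, i.e.\ under \eqref{A0}. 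This produces the uniform bounds underlying \eqref{DR1}--\eqref{DReta}. Compactness then follows: Aubin–Lions for $\ve$ (using \eqref{DRdv}) and for $\B$ (using \eqref{DRdtB}), and a div-curl / monotone-operator argument for the temperature; the diffusive term $\di(\la(\theta)\nn\B)$ and the a.e.\ convergence of $\B$ allow identification of $\PP(\theta,\B)$ as the weak limit of $\PP(\theta_n,\B_n)$.

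**The main obstacle.** The hardest point is passing to the limit in the internal-energy balance \eqref{s9}, specifically in the critical term $\theta\B\cdot\Dv$ and in the viscous term $2\nu(\theta)|\Dv|^2$, because these are only equi-integrable (not weakly precompact in a reflexive space) thanks to \eqref{keys4}, and because $\theta$ itself has merely the low integrability \eqref{DRth}. The resolution is to abandon the equality in \eqref{s9} and keep only the inequality \eqref{Qthe}: one tests the approximate equation by $-\theta_n^{-\beta}\phi\varphi$ (with $0\le\phi,\varphi$), uses that $-\theta^{-\beta}$ is a concave function of $\theta$ together with weak lower semicontinuity to get the correct inequality sign in the limit, and simultaneously propagates the entropy inequality \eqref{Qent} by weak lower semicontinuity of the convex entropy-production terms in \eqref{s5} (here the a.e.\ convergence of $\nabla\B^{q/2}$, $\nabla\ln\theta$ and of $\Dv$ — the latter obtained from the genuine strong convergence of $\Dv$ that the viscous estimate forces — is what makes $\int\xi$ lower semicontinuous). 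Finally, for the second part of the theorem, when $d\le 3$ and $\partial\Omega\in\mathcal C^{1,1}$, the pressure $\pr\in L^{(d+2)/d}(\Q)$ is reconstructed from \eqref{Qv} via the $W^{2,r}$-regularity of the Neumann Laplacian (as in the cited references \cite{BulicekIndiana,Bulicek2019}), and the local total-energy balance \eqref{QEloc} is obtained by testing \eqref{Qvp} by $\ve\phi\varphi$ and adding the (already established, now localized) internal-energy balance — here one must check that $\pr\ve\cdot\nabla(\phi\varphi)$ and $\Sb\ve\cdot\nabla(\phi\varphi)$ are integrable, which is exactly where the restriction $d\le 3$ and the integrability \eqref{DR1}, \eqref{DRth} are used.
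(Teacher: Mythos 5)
Your proposal captures the broad architecture (cut-off of the Gordon--Schowalter coupling terms, Galerkin discretization, entropy-based a~priori bounds, rigorous re-derivation of \eqref{keys4}), but there are two genuine gaps and one misattributed mechanism.

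First, you assert that ``the viscous estimate forces'' genuine \emph{strong} convergence of $\Dv$, and you lean on the resulting a.e.\ convergence of $\Dv$, $\nn\ln\theta$ and $\nn\B^{q/2}$ to pass to the limit in the entropy production $\xi$ by Fatou. This strong convergence of $\Dv$ is not available: the momentum equation gives only weak $L^2$ compactness of $\Dv_\ell$, and because $\Sb$ contains the sign-indefinite coupling $2a\mu\theta\B$ there is no monotonicity structure from which to bootstrap strong convergence of $\Dv$. The paper instead identifies the weak $L^2$-limits of $\sqrt{2\nu(\theta_\ell)/\theta_\ell}\,\Dv_\ell$ and $\sqrt{\la(\theta_\ell)}\,\B_\ell^{-1/2}\nn\B_\ell\B_\ell^{-1/2}$ via Egorov's theorem (uniform convergence of the scalar/matrix coefficient factors on large sets, combined with the already-known weak convergence of $\Dv_\ell$ and $\nn\B_\ell$), and then uses weak lower semicontinuity of the $L^2$-norm --- no a.e.\ convergence of the gradients is ever used or established. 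Similarly, the inequality sign in \eqref{Qthe} is not obtained by testing with $-\theta^{-\beta}\phi\varphi$ (that is an a~priori-estimate device, not a limit-passage device); it is obtained by restricting to non-negative test functions and applying weak lower semicontinuity to the viscous term $2\nu(\theta_\ell)|\Dv_\ell|^2$ and Fatou to the pointwise-convergent term $\PP(\theta_\ell,\B_\ell)\cdot(\I-\B_\ell^{-1})$.

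Second, your proposal omits entirely the strong attainment of the initial condition for $\theta$, item \eqref{Qict}, which is the most delicate and novel step of the paper. Because $\Sb\cdot\Dv$ has no sign, one cannot get $\limsup_{t\to0+}\norm{\ve(t)}_2^2\le\norm{\ve_0}_2^2$ directly from the kinetic energy estimate, and $\theta$ is only in $L^\infty L^1$ and $BV(0,T;W^{-M,2})$, so weak $L^1$ continuity at $t=0$ is not automatic. The paper resolves this by \emph{combining} the localized entropy inequality \eqref{ii} (which, via Jensen's inequality for the convex function $\exp(\cdot/2)$ and the Lebesgue differentiation theorem, forces $\theta_0\le\thlim$ a.e.) with the global energy balance \eqref{Ener3} and the weak continuity of $\ve$ and $\sqrt\theta$ (which, via convexity of the $L^2$-norm, give the opposite inequality $\int\thlim\le\int\theta_0$); only then does $\sqrt{\theta(t)}\to\sqrt{\theta_0}$ strongly in $L^2$, which in turn unlocks \eqref{Qicv} and, together with the strict convexity of $f$, \eqref{QicB}. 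Without this argument your scheme does not close: you cannot conclude \eqref{Qict}--\eqref{Qicet}, and hence cannot prove the first part of Theorem~\ref{LG}. Two minor points: the paper does not replace $\PP$ by a bounded Lipschitz truncation (only the $(a\Dv+\Wv)\B$ and $\theta\B$ terms are cut off by $g_\omega$), and it adds an $(r+2)$-Laplacian $-\omega\Delta_{r+2}\theta$ to the temperature equation precisely to avoid working in weighted Sobolev spaces where smooth functions need not be dense --- a technical device your outline does not contain and would likely need.
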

		
		We remark that if a~weak solution admits enough regularity so that \eqref{Qv} can be tested by $\ve$ and \eqref{Qthe} can be localized in space, then \eqref{Qthe} holds as an equality. Indeed, the~localized version of \eqref{Qthe} reads
		\begin{equation}\label{weas2}
			c_v\partial_t\theta+c_v\ve\cdot\nn\theta-\di(\kappa(\theta)\nn\theta)-\Sb\cdot\Dv\geq0.
		\end{equation}
		On the~other hand, subtracting \eqref{Qv} tested by $\ve$ from \eqref{QE} yields
		\begin{equation*}
			\ii(c_v\partial_t\theta-\Sb\cdot\Dv)=0.
		\end{equation*}
		Since also
		\begin{equation*}
			\ii (c_v\ve\cdot\nn\theta-\di(\kappa(\theta)\nn\theta))=0
		\end{equation*}
		due to the~boundary conditions $\ve\cdot\n=0$ and $\nn\theta\cdot\n=0$ on $\partial\Omega$, we conclude from the~above that \eqref{weas2} must be an equality. Consequently, the~entropy inequality~\eqref{Qent} also becomes an equality, provided that one is able to justify $\B^{-1}$ and $\theta^{-1}$ as tests in \eqref{QB} and \eqref{Qthe}. These considerations imply that a~weak solution that admits sufficient regularity is also a~solution of \eqref{s1}--\eqref{s15ic} in the~classical sense.

		The~existence~proof below is done only for $d\geq 3$ (the~case $d=2$ is simpler). Also, it is clearly enough to focus on the~case $\alpha>0$. In the~simpler case $\alpha=0$ (corresponding to the~free-slip boundary condition), one just has to use a~different Korn--Poincar\'{e} inequality in case $\Omega$ is axially symmetric.

		The general strategy of the~proof is to approximate the~system \eqref{s7}, \eqref{s8}, \eqref{s9} using several parameters to obtain a~proper Galerkin approximation generated by a~smooth basis of eigenvectors and to show that the~resulting (ODE) system has a~solution. After that, our aim is to derive the~entropy equation. At this point, possibly irregular terms containing $\theta$ and $\B$ are cut-off and $\ve$ is smooth, hence we easily obtain uniform estimates for the~Galerkin approximations of $\B$ and $\theta$, which might not be positive definite or positive, respectively. However, after taking the~limit with these approximations and then proving certain maximum principles, we prove invertibility of $\theta$ and $\B$, which, in turn, enables us to derive the~entropy equation. From this we read that the~positivity of $\det\B$ and $\theta$ is preserved uniformly, which then enables us to remove the~cut-off from the~system.  The~proof of this is presented in Section~\ref{SS1}. Note that at this point, the~velocity is still kept in a~finite $\ell$-dimensional space. To the~equation for the~internal energy we add the~regularization $-\omega \Delta_{r+2}\theta$ (the so-called $(r+2)$-Laplacian) in order to avoid weighted Sobolev spaces, where the~density of smooth functions is not available in general.
		
		Next, in Section~\ref{SS5}, we first improve the~uniform estimates by considering appropriate test functions in the~equations for $\theta$ and $\B$. At this point such a~procedure is rigorous. Finally, we let $\omega \to 0$ and $\ell\to \infty$ and  we pass to the~final limit, identify the~non-linear terms and initial conditions, hereby obtaining a~solution of the~original problem. Finally, in Section~\ref{SS7}, we prove the~validity of the~local energy equality provided $d\le 3$.			
		
		\section{Existence of a~weak solution: the~approximative problem} \label{SS1}
		First we introduce a~truncation, which is essential for the~proof. We also prepare some simple estimates corresponding to this truncation that are used later in the~proof. Recalling that $\omega_P$ is introduced in \eqref{APpd}, we define, for any $\omega\in(0,\omega_P)$, the~``cut-off'' function $g_{\omega}$ in the~following way:
		\begin{equation*}
			g_{\omega}(\A,\tau)\coloneqq\f{\max\{0,\Lambda(\A)-\omega\}\max\{0,\tau-\omega\}}{(|\Lambda(\A)|+\omega)(1+\omega|\A|^2)(|\tau|+\omega)(1+\omega\tau^2)},\quad\A\in\R^{d\times d}_{\rm sym},\quad\tau\in\R,
		\end{equation*}
		where $\Lambda(\A)$ denotes the~smallest eigenvalue of $\A$, i.e., 
		\begin{equation*}
			\Lambda(\A)\coloneqq\min\{\la:\det(\A-\la\I)=0\}.
		\end{equation*}
		Note that $g_{\omega}$ is a~continuous function in $\R^{d\times d}_{\rm sym}\times\R$ and satisfies $0\leq g_{\omega}(\A,\tau)<1$ for every $(\A,\tau)\in\R^{d\times d}_{\rm sym}\times\R$. Moreover, if $\Lambda(\A)\leq\omega$ or $\tau\leq\omega$, then $g_{\omega}(\A,\tau)=0$, whereas if $\Lambda(\A)>0$ and $\tau>0$, then $g_{\omega}(\A,\tau)\to1$ as $\omega\to0+$.
		Furthermore, we remark that
		\begin{equation}\label{godhad}
			g_{\omega}(\A,\tau)(1+|\A|+|\A|^2)(1+\tau+\tau^2)\leq C(\omega).
		\end{equation}
		The function $g_{\omega}$ is used below in the~system \eqref{galv}--\eqref{galt} to control irregular terms of the~original problem. We also truncate the~initial functions $\B_0$ and $\theta_0$ and set 
		\begin{align}
			&\begin{aligned}\label{Bodef}
				\B_0^{\omega}(x)&\coloneqq\Big\{\begin{matrix}
					\;\B_0(x)&\text{if }\Lambda(\B_0(x))>\omega\;\text{ and }\;|\B_0(x)|<\sqrt{d}\,\omega^{-1},\\
					\;\I &\text{elsewhere};\end{matrix}
			\end{aligned}\\
			&\begin{aligned}\label{todef}
				\theta_0^{\omega}(x)&\coloneqq\Big\{\begin{matrix}
					\;\theta_0(x)&\text{if }\omega<\theta_0(x)<\omega^{-1},\\
					\;1 &\text{elsewhere}.
				\end{matrix}
			\end{aligned}
		\end{align}
		With such definitions, these functions satisfy (a.e. in $\Omega$)
		\begin{align}\label{0zesp}
			\Lambda(\B_0^{\omega}) &>\omega, && \quad\theta_0^{\omega} >\omega \\
			\label{0sh}
			|\B_0^{\omega}|&<\sqrt{d}\,\omega^{-1}, && \,\, |\theta_0^{\omega}|<\omega^{-1} \\
			\label{entOO}
			|\B_0^{\omega}| &\leq \sqrt{d}+|\B_0|, &&\quad\theta_0^{\omega}\leq 1+\theta_0,
		\end{align}
		and, since $\ln 1=0$,
		\begin{align}\label{entO}
			|\ln\det\B_0^{\omega}|\leq|\ln\det\B_0|,\qquad \quad|\ln\theta_0^{\omega}|\leq|\ln\theta_0|.
		\end{align}
		Since $\B_0\in L^{q}(\Omega;\PD)$, we also observe that the~Lebesgue measure of the~sets $\{\Lambda(\B_0)\leq \omega\}$ and $\{|\B_0|\geq\omega^{-1}\}$ tends to zero as $\omega\to0+$, and thus
		\begin{equation}\label{B0con}			\norm{\B_0^{\omega}-\B_0}_{q}^{q}=\int_{\Lambda(\B_0)\leq\omega}|\I-\B_0|^{q}+\int_{|\B_0|\geq\omega^{-1}}|\I-\B_0|^{q}\to0.
		\end{equation}
		Analogously, relying on $\theta_0\in L^1(\Omega;\R_{>0})$, we also obtain
		\begin{equation}\label{t0con}
			\norm{\theta_0^{\omega}-\theta_0}_1\to0,\quad\omega\to0+.
		\end{equation}	
		
		Next, we discretize the~$\omega$-truncated system in space by the~Galerkin method.\footnote{ With this approach, we do not need the~positive definiteness of the~basis functions for $\B$. } Let $\{\we_i\}_{i=1}^{\infty}$, $\{\Wb_j\}_{j=1}^{\infty}$ and $\{w_k\}_{k=1}^{\infty}$ be bases of $W^{N,2}(\Omega;\R^d)\cap W^{1,2}_{\n,\di}$, $W^{N,2}(\Omega;\Sym)$ and $W^{N,2}(\Omega;\R)$, respectively, with the~following properties:
		\begin{itemize}
			\item The~bases are $L^2$-orthonormal and $W^{N,2}$-orthogonal.
			\item The~number $N\in\N$ is chosen so large that the~elements of the~bases are Lipschitz (due to embeddings of Sobolev spaces).
			\item $w_1=|\Omega|^{-\f12}$.
			\item For any $\ell,n\in\N$, there exist $L^2$-orthogonal projections
			\begin{align*}
				P_{\ell}:L^2(\Omega;\R^d)&\to \spa\{\we_i\}_{i=1}^{\ell},\\
				Q_n:L^2(\Omega;\R^{d\times d})&\to \spa\{\Wb_j\}_{j=1}^n,\\
				R_n:L^2(\Omega;\R)&\to \spa\{w_k\}_{k=1}^n
			\end{align*}
			\item\label{projc}$
			P_{\ell},Q_n,R_n\text{ are }L^2\text{- and }W^{N,2}\text{-bounded, uniformly w.r.t. }\ell,n.$
		\end{itemize}
		Existence of these bases and corresponding projections follows from standard results (see Appendix~4 in \cite{Malek1996}) using the~eigenvectors of the~generalized Laplace or Stokes operators.
		
		We fix $\ell,n\in\N$ and consider the~problem of finding the~functions $\alpha_{\ell n}^i$, $\beta_{\ell n}^j$, $\gamma_{\ell n}^k$ of time, where $i=1,\ldots,\ell$ and $j,k=1,\ldots, n$, such that the~functions $\ve_{\ell n}$, $\B_{\ell n}$, $\theta_{\ell n}$ and $\Sb_{\ell n}^{\omega}$ defined as
		\begin{equation*}
			\ve_{\ell n}(t,x)=\sum_{i=1}^{\ell}\alpha^i_{\ell n}(t)\we_i(x),\;
			\B_{\ell n}(t,x)=\sum_{j=1}^n\beta^j_{\ell n}(t)\Wb_j(x),\; \theta_{\ell n}=\sum_{k=1}^n\gamma^k_{\ell n}(t)w_k(x)
		\end{equation*}
		and
		\begin{equation}\label{Som}
			\Sb_{\ell n}^{\omega}\coloneqq2\nu(\theta_{\ell n})\Dv_{\ell n}+2a\mu g_{\omega}(\B_{\ell n},\theta_{\ell n})\theta_{\ell n}\B_{\ell n}
		\end{equation}
		satisfy the~following equations a.e.\ in $(0,T_0)$, $T_0>0$:
		\begin{align}
			&\begin{aligned}
				&(\partial_t\ve_{\ell n},\we_i)-(\ve_{\ell n}\otimes\ve_{\ell n},\nn\we_i)+(\Sb_{\ell n}^{\omega},\nn\we_i)+\alpha(\ve_{\ell\n},\fit)_{\partial\Omega}=(\fe,\we_i),\label{galv}
			\end{aligned}\\[0.1cm]
			&\begin{aligned}
				&(\partial_t\B_{\ell n},\Wb_j)-(\B_{\ell n}\otimes\ve_{\ell},\nn\Wb_j)+(\PP(\theta_{\ell n},\B_{\ell n}),\Wb_j)+(\la(\theta_{\ell n})\nn\B_{\ell n},\nn\Wb_j)\\
				&\hskip4.5cm=(2g_{\omega}(\B_{\ell n},\theta_{\ell n})(a\Dv_{\ell n}+\Wv_{\ell n})\B_{\ell n},\Wb_j),\label{galB}
			\end{aligned}\\[0.1cm]
			&\begin{aligned}
				&(c_v\partial_t\theta_{\ell n},w_k)-(c_v\theta_{\ell}\ve_{\ell n},\nn w_k)+((\kappa(\theta_{\ell n})+\omega|\nn\theta_{\ell n}|^r)\nn\theta_{\ell n},\nn w_k)\\
				&\qquad\qquad\qquad=(\Sb_{\ell n}^{\omega}\cdot\Dv_{\ell n},w_k)\label{galt},
			\end{aligned}
		\end{align}
		for all $1\leq i\leq\ell$, $1\leq j,k\leq n$ and with the~initial conditions
		\begin{equation}\label{galic}
			\ve_{\ell n}(0)=P_{\ell}\ve_0,\quad \B_{\ell n}(0)=Q_{n}\B_0^{\omega},\quad \theta_{\ell n}(0)=R_n\theta_0^{\omega}\quad\text{in }\Omega.
		\end{equation}
		By the~$L^2$-orthonormality of the~bases, we have 
		\begin{equation*}
			(\partial_t\ve_{\ell n},\we_i)=\sum_{m=1}^{\ell}\partial_t\alpha_{\ell n}^m(\we_m,\we_i)=(\alpha_{\ell n}^i)'
		\end{equation*}
		and similarly
		\begin{equation*}
			(\partial_t\B_{\ell n},\Wb_j)=(\beta^j_{\ell n})'\quad \textrm{ and } \quad(\partial_t\theta_{\ell n},w_k)=(\gamma^k_{\ell n})'.
		\end{equation*}
		Thus, \eqref{galv}--\eqref{galt} is a~system of $\ell+2n$ ordinary differential equations of the~form
		\begin{equation}\left.\hskip2cm\begin{aligned}\label{carat}
				(\alpha_{\ell n}^i)'&=F_1(t,\alpha_{\ell n}^1,\ldots,\alpha_{\ell n}^{\ell}),&&i=1,\ldots,\ell,\\
				(\beta_{\ell n}^j)'&=F_2(\beta_{\ell n}^1,\ldots,\beta_{\ell n}^n),&&j=1,\ldots,n,\\
				(\gamma_{\ell n}^k)'&=F_3(\gamma_{\ell n}^1,\ldots,\gamma_{\ell n}^n),&&k=1,\ldots,n.
			\end{aligned}\hskip2cm\right\}\end{equation}
		It is easy to see, using \eqref{Acont}, that $F_1,F_2$ and $F_3$ are continuous with respect to the~variables $\alpha_{\ell n}^i$, $\beta_{\ell n}^j$ and $\gamma_{\ell n}^k$ and measurable with respect to $t$, respectively. Moreover, the~explicit dependence of $F_1$ on time is controlled by
		\begin{equation*}
			|(\fe,\we_i)|\leq\norm{\fe}_2\norm{\we_i}_2\in L^2(0,T;\R).
		\end{equation*}
		Thus, we can apply the~Carath\'{e}odory existence theorem (see \cite[Chapter~2, Theorem~1]{Coddington1955} or \cite[Chapter~30]{Zeidler1990}) and hereby obtain absolutely continuous functions $\alpha^i_{\ell n}$, $\beta^j_{\ell n}$, $\gamma_{\ell n}^k$, $1\leq i\leq\ell$, $1\leq j,k\leq n$, solving \eqref{carat} on $(0,T_0)$, where $T_0<T$ is the~time of the~first blow-up. In view of the~a~priori estimates derived below (see e.g.\ \eqref{alest}), we are able to prove that
		\begin{equation*}
			\sup_{t\in(0,T_0)}\Big(\sum_{i=1}^{\ell}(\alpha_{\ell n}^i(t))^2+\sum_{j=1}^n(\beta_{\ell n}^j(t))^2+\sum_{k=1}^n(\gamma_{\ell n}^k(t))^2\Big)<\infty,
		\end{equation*}
		hence, there can be no blow-up and the~functions $\ve_{kl},\B_{kl},\theta_{kl}$ are defined on an arbitrary time interval, in particular on $[0,T]$.
		
		\subsection*{Estimates uniform with respect to \texorpdfstring{$n$}{n}}
		By multiplying the~$i$-th equation in~\eqref{galv} by $\alpha^i_{\ell n}$, summing the~result over all $i=1,\ldots,\ell$, integrating by parts and using the~facts that the~basis functions satisfy $\ve_{\ell n}\cdot \n = 0$ on $\partial \Omega$ and $\di \ve_{\ell n} = 0$ in $\Omega$ (hence the~convective term vanishes), we obtain
		(a.e. in $(0,T)$)
		\begin{align}\begin{aligned}\label{vest}
				&\f12\f{\dd{}}{\dd{t}}\norm{\ve_{\ell n}}_2^2+\norm{\sqrt{2\nu(\theta_{\ell n})}\Dv_{\ell n}}_2^2+\alpha\norm{\ve_{\ell n}}^2_{L^2(\partial\Omega;\R^d)}\\
				&\hskip3cm=-(2a\mu g_{\omega}(\B_{\ell n},\theta_{\ell n})\theta_{\ell n}\B_{\ell n},\D\ve_{\ell n})+(\fe,\ve_{\ell n}).
			\end{aligned}
		\end{align}
		Then we use \eqref{Anu}, \eqref{godhad}, Korn's and Young's inequality, and deduce 
		\begin{align*}
			\f{\dd{}}{\dd{t}}\norm{\ve_{\ell n}}_2^2+\norm{\nn\ve_{\ell n}}_2^2+\alpha\norm{\ve_{\ell n}}^2_{L^2(\partial\Omega;\R^d)}&\leq C(\omega)\ii|\Dv_{\ell n}|+C\norm{\fe}_{2}\norm{\nn\ve_{\ell n}}_{2}\\
			&\leq C(\omega)+C\norm{\fe}_{2}^{2}+\f12\norm{\nn\ve_{\ell n}}_2^2.
		\end{align*}
		Integration with respect to time and the~use of \eqref{galic} and \eqref{init} directly leads to
		\begin{equation}\label{est1}
			\sup_{t\in(0,T)}\norm{\ve_{\ell n}(t)}_2^2+\int_0^T\norm{\nn\ve_{\ell n}}_2^2+\alpha\int_0^T\norm{\ve_{\ell n}}^2_{L^2(\partial\Omega;\R^d)}\leq C(\omega).
		\end{equation}
		(the~dependence of the~constant $C$ on the~data is omitted as $\fe$, $\ve_0$, $\theta_0$, or $\B_0$ are fixed functions in our setting). Utilizing the~$L^2$-orthonormality of the~basis vectors $\{\we_i\}_{i=1}^{\ell}$, estimate \eqref{est1} yields
		\begin{equation}\label{alest}
			\sup_{t\in(0,T)}\sum_{i=1}^{\ell}(\alpha^i_{\ell n}(t))^2=\sup_{t\in(0,T)}\norm{\ve_{\ell n}(t)}_2^2\leq C(\omega).
		\end{equation}
		Hence, recalling $\we_i\in W^{1,\infty}(\Omega;\R^d)$, $i=1,\ldots,\ell$ and then also the~definition \eqref{Som} and the~estimate \eqref{godhad}, we obtain
		\begin{equation}\label{vLip}
			\norm{\ve_{\ell n}}_{L^{\infty}W^{1,\infty}}+\norm{\Sb_{\ell n}^{\omega}}_{L^{\infty}L^{\infty}}\leq C(\omega,\ell).
		\end{equation}
		Using \eqref{vLip} in~\eqref{galv}, we see that
		\begin{align}\label{dert}
			&\norm{(\alpha^i_{\ell n})'}_{L^2(0,T;\R)}=\norm{(\partial_t\ve_{\ell n},\we_i)}_{L^2(0,T;\R)}\nonumber\\
			&\quad=\norm{(\ve_{\ell n}\otimes\ve_{\ell n}-\Sb_{\ell n}^{\omega},\nn\we_i)-\alpha(\ve_{\ell n},\we_i)_{\partial\Omega}+(\fe,\we_i)}_{L^2(0,T;\R)}\\
			&\quad\leq C(\omega,\ell)+C(\ell)\norm{\fe}_{L^{2}L^2}.\nonumber
		\end{align}
		Thus, we get
		\begin{equation}\label{dvLip}
			\norm{\partial_t\ve_{\ell n}}_{L^{2}W^{1,\infty}}=\norm{{\textstyle\sum}_{i=1}^{\ell}(\alpha^i_{\ell n})'\we_i}_{L^{2}W^{1,\infty}}\leq C(\omega,\ell)
		\end{equation}
		and, using the~fundamental theorem of calculus and H\"older's inequality, also that
		\begin{equation}\label{unifc}
			|\alpha^i_{\ell n}(t)-\alpha_{\ell n}^i(s)|\leq\int_s^t|(\alpha^i_{\ell n})'|\leq C(\omega,\ell)|t-s|^{\f12}\quad\text{for every }t,s\in[0,T]
		\end{equation}
		and any $i=1,\ldots,\ell$.
		
		Next, we multiply the~$j$-th equation in~\eqref{galB} by $\beta^j_{\ell n}$ and sum the~result over $j=1,\ldots,n$. Note that the~convective term vanishes after integration by parts and use of $\eqref{s12bc}_1$ and \eqref{s6d}. Also the~term including $\Wv_{\ell n}$ vanishes due to symmetry of $\B_{\ell n}^2$. Thus, we obtain
		\begin{align}\begin{aligned}\label{r1}
				\f12\f{\dd{}}{\dd{t}}\norm{\B_{\ell n}}_2^2&+(\PP(\theta_{\ell n},\B_{\ell n}),\B_{\ell n})+\norm{\sqrt{\la(\theta_{\ell n})}\nn\B_{\ell n}}_2^2\\
				&\hskip1cm=(2ag_{\omega}(\B_{\ell n},\theta_{\ell n})\Dv_{\ell n}\B_{\ell n},\B_{\ell n}) \quad \textrm{ a.e. in } (0,T).
		\end{aligned}\end{align}
		Then using \eqref{galic}, \eqref{APcoer}, \eqref{Ala} and \eqref{godhad} we obtain, after integration over $(0,t)$, $t\in(0,T)$, that
		\begin{equation*}
			\norm{\B_{\ell n}(t)}_2^2+\int_0^t\norm{\B_{\ell n}}_{2+q}^{2+q}+\int_0^t\norm{\nn\B_{\ell n}}_2^2\leq\norm{Q_n\B_0^{\omega}}_{2}^2+ C(\omega,\ell).
		\end{equation*}
		From this, using properties of $Q_n$ and \eqref{0sh}, we easily read that
		\begin{equation}\label{best}
			\norm{\B_{\ell n}}_{L^{\infty}L^2}+\norm{\B_{\ell n}}_{L^{2+q}L^{2+q}}+\norm{\nn\B_{\ell n}}_{L^2L^2}\leq C(\omega,\ell). 
		\end{equation}
		To estimate the~time derivative of $\B_{\ell n}$, we take $\A\in L^{q+2}(0,T;W^{N,2}(\Omega))$ with $\norm{\A}_{L^{q+2}W^{N,2}}\leq 1$ and use \eqref{galB}, H\"older's inequality, \eqref{best}, \eqref{vLip}, \eqref{Ala}, \eqref{APbou}, \eqref{godhad}, properties of $Q_n$ and $(\min\{2,\f{q+2}{q+1}\})'=q+2$ to get 
		\begin{align*}
			\scal{\partial_t\B_{\ell n},\A}
			&=(\partial_t\B_{\ell n},Q_n\A)_{\Q}\\
			&\quad=(\B_{\ell n}\otimes\ve_{\ell n}-\la(\theta_{\ell n})\nn\B_{\ell n},\nn Q_n\A)_{\Q}-(\PP(\theta_{\ell n},\B_{\ell n}),Q_n\A)_{\Q}\\
			&\quad\quad+(2g_{\omega}(\B_{\ell n},\theta_{\ell n})(a\Dv_{\ell n}+\Wv_{\ell n})\B_{\ell n},Q_n\A)_{\Q}\\
			&\quad\leq C(\omega,\ell)\int_{\Q}\big((|\B_{\ell n}|+|\nn\B_{\ell n}|)|\nn Q_n\A|+(|\B_{\ell n}|^{q+1}+1)|Q_n\A|\big)\\
			&\quad\leq C(\omega,\ell)\int_0^T(\norm{\nn\B_{\ell n}}_1+\norm{\B_{\ell n}}_{q+1}^{q+1}+1)\norm{Q_n\A}_{1,\infty}\\
			&\quad\leq C(\omega,\ell)\int_0^T(\norm{\nn\B_{\ell n}}_2+\norm{\B_{\ell n}}_{q+2}^{q+1}+1)\norm{Q_n\A}_{N,2}\\
			&\quad\leq C(\omega,\ell)\norm{\A}_{L^{q+2}W^{N,2}}\leq C(\omega,\ell).
		\end{align*}
		Hence, we can conclude
		\begin{equation}\label{dtB}
			\norm{\partial_t\B_{\ell n}}_{L^{\f{q+2}{q+1}}W^{-N,2}}\leq C(\omega,\ell).
		\end{equation}
		
		Next, we multiply the~$k$-th equation in~\eqref{galt} by $\gamma^k_{\ell n}$, sum the~result over $k=1,\ldots,n$, use $\eqref{s12bc}_1$, $\eqref{s6d}$ and integration by parts in the~convective term to get
		\begin{equation}\begin{aligned}\label{potest}
				&\f{c_v}2\f{\dd{}}{\dd{t}}\norm{\theta_{\ell n}}_2^2+\norm{\sqrt{\kappa(\theta_{\ell n})}\nn \theta_{\ell n}}_2^2+\omega\norm{\nn\theta_{\ell n}}_{r+2}^{r+2}=(\Sb_{\ell n}^{\omega}\cdot\Dv_{\ell n},\theta_{\ell n})
		\end{aligned}\end{equation}
		a.e.~in $(0,T)$.  Thus, integrating this inequality over time, using \eqref{vLip} and Young's, Gronwall's and Poincar\'e's inequalities, properties of $R_n$ and \eqref{0sh}, we deduce
		\begin{equation}\label{test}
			\norm{\theta_{\ell n}(t)}_{L^{\infty}L^2}+\norm{\sqrt{\kappa(\theta_{\ell n})}\nn \theta_{\ell n}}_{L^2L^2}+\norm{\theta_{\ell n}}_{L^{r+2}W^{1,r+2}}\leq C(\omega,\ell).
		\end{equation}
		Furthermore, taking $\tau\in L^{r+2}(0,T;W^{N,2}(\Omega))$ with $\norm{\tau}_{L^{r+2}W^{N,2}}\leq1$ and using \eqref{galt}, Young's inequality, H\"older's inequality, \eqref{Akap}, \eqref{vLip}, \eqref{test} and properties of $R_n$, we obtain 
		\begin{equation*}\begin{aligned}
				&\scal{\partial_t\theta_{\ell n},\tau}=(\partial_t\theta_{\ell n},R_n\tau)_{\Q}\\
				&\quad=(c_v\theta_{\ell n}\ve_{\ell n}-\kappa(\theta_{\ell n})\nn\theta_{\ell n}-\omega|\nn\theta_{\ell n}|^r\nn\theta_{\ell n},\nn R_n\tau)_{\Q}+(\Sb_{\ell n}^{\omega}\cdot\Dv_{\ell n},R_n\tau)_{\Q}\\
				&\quad\leq C(\omega,\ell)\int_{\Q}\Big(\big(|\theta_{\ell n}|+|\theta_{\ell n}|^{\f r2}\big|\sqrt{\kappa(\theta_{\ell n})}\nn\theta_{\ell n}\big|+|\nn\theta_{\ell n}|^{r+1}\big)|\nn R_n\tau|+|R_n\tau|\Big)\\
				&\quad\leq C(\omega,\ell)\int_0^T\ii\Big(|\theta_{\ell n}|^{r+1}+\left|\sqrt{\kappa(\theta_{\ell n})}\nn\theta_{\ell n}\right|^{\f{2r+2}{r+2}}+|\nn\theta_{\ell n}|^{r+1}+1\Big)\norm{R_n\tau}_{1,\infty}\\
				&\quad\leq C(\omega,\ell)\int_0^T\Big(\norm{\theta_{\ell n}}_{r+2}^{r+1}+\norm{\sqrt{\kappa(\theta_{\ell n})}\nn\theta_{\ell n}}_2^{\f{2r+2}{r+2}}+\norm{\nn\theta_{\ell n}}_{r+2}^{r+1}+1\Big)\norm{R_n \tau}_{N,2}\\
				&\quad\leq C(\omega,\ell)\norm{\tau}_{L^{r+2}W^{N,2}}\leq C(\omega,\ell),
		\end{aligned}\end{equation*}
		hence
		\begin{equation}\label{dt}
			\norm{\partial_t\theta_{\ell n}}_{L^{\f{r+2}{r+1}}W^{-N,2}}\leq C(\omega,\ell).
		\end{equation}
		
		\subsection*{The~limit \texorpdfstring{$n\to\infty$}{n}}
		
		For every $i=1,\ldots,\ell$, the~sequence $\{\alpha^i_{\ell n}\}_{n=1}^{\infty}\subset\CO([0,T];\R)$ is bounded due to~\eqref{alest} and uniformly equicontinuous by \eqref{unifc}. Hence, using the~Arzel\`a-Ascoli theorem, for every $i=1,\ldots,\ell$, we obtain $\alpha^i_{\ell}\in\CO([0,T];\R)$ and a~subsequence (not relabelled) such that
		\begin{equation}\label{stroalf}
			\alpha^i_{\ell n}\to\alpha^i_{\ell}\quad\text{strongly in }\CO([0,T];\R)
		\end{equation}
		as $n\to\infty$. Then, we define
		\begin{equation*}
			\ve_{\ell}\coloneqq\sum_{i=1}^{\ell}\alpha^i_{\ell}\we_i\in \CO([0,T];W^{1,\infty}(\Omega;\R^d)\cap W^{1,2}_{\n,\di})
		\end{equation*}
		and note that
		\begin{equation}\label{strov}
			\ve_{\ell n}\to\ve_{\ell}\quad\text{strongly in }\CO([0,T];W^{1,\infty}(\Omega;\R^d)).
		\end{equation}
		According to estimates \eqref{dvLip}, \eqref{best}, \eqref{dtB}, \eqref{test}, \eqref{dt} and using reflexivity of the~underlying spaces and the~Aubin--Lions lemma, there exist subsequences $\{\ve_{\ell n}\}_{n=1}^{\infty}$, $\{\B_{\ell n}\}_{n=1}^{\infty}$, $\{\theta_{\ell n}\}_{n=1}^{\infty}$ and their limits $\ve_{\ell}$, $\B_{\ell}$, $\theta_{\ell}$, such that
		{\allowdisplaybreaks\begin{align}
				\partial_t\ve_{\ell n}&\wcs\partial_t\ve_{\ell}&&\text{weakly* in }L^{2}(0,T;W^{1,\infty}(\Omega;\R^d)),\label{dvl}\\
				\ve_{\ell n}&\wc\ve_{\ell}&&\text{weakly in }L^2(0,T;L^2(\partial\Omega;\R^d)),\label{tracen}\\
				\B_{\ell n}&\wc\B_{\ell}&&\text{weakly in }L^2(0,T,W^{1,2}(\Omega;\Sym)),\label{Bw}\\
				\B_{\ell n}&\to\B_{\ell}&&\text{strongly in }L^{2+q)}(\Q;\Sym)\text{ and a.e.\ in }\Q,\label{bstr}\\[-0.1cm]
				\partial_t\B_{\ell n}&\wc\partial_t\B_{\ell}&&\text{weakly in }L^{\f{q+2}{q+1}}(0,T;W^{-N,2}(\Omega;\Sym)),\label{gBd}\\
				\theta_{\ell n}&\wc\theta_{\ell}&&\text{weakly in }L^{r+2}(0,T,W^{1,r+2}(\Omega;\R))\label{thw},\\
				\theta_{\ell n}&\to\theta_{\ell}&&\text{strongly in }L^{r+2+\f4d)}(\Q;\Sym)\text{ and a.e.\ in }\Q,\label{strt}\\
				\partial_t\theta_{\ell n}&\wc\partial_t\theta_{\ell}&&\text{weakly in }L^{\f{r+2}{r+1}}(0,T;W^{-N,2}(\Omega;\R))\label{tep}.
		\end{align}}
		Now, we explain how to take the~limit in the~non-linear terms appearing in~\eqref{galv}, \eqref{galB} and \eqref{galt}. To handle most of the~terms, namely
		\begin{align*}
			&\ve_{\ell n}\otimes\ve_{\ell n},\quad\nu(\theta_{\ell n})\Dv_{\ell n},\quad \Sb_{\ell n}^{\omega},\quad \PP(\theta_{\ell n},\B_{\ell n}),\quad\la(\theta_{\ell n})\nn\B_{\ell n},\\
			&\quad g_{\omega}(\B_{\ell n},\theta_{\ell n})(a\Dv_{\ell n}+\Wv_{\ell n})\B_{\ell n},\quad\ve_{\ell n}\cdot\nn\theta_{\ell n},\quad \Sb_{\ell n}^{\omega}\cdot\Dv_{\ell n},
		\end{align*}
		we use the~following standard argument: all these terms can be seen as a~product of a~weakly converging sequence with a~strongly converging sequence, obtained via Vitali's theorem, \eqref{Acont}, continuity of $g_{\omega}$ and pointwise convergence of $\ve_{\ell n}$, $\B_{\ell n}$ and $\theta_{\ell n}$.
		This argument is sufficient to take the~limit $n\to\infty$ in the~equations \eqref{galv} and \eqref{galB}. In \eqref{galB}, we first multiply the~equation by a~function $\varphi\in\CO^1([0,T];\R)$, integrate over $(0,T)$, then take the~limit and finally use the~density of functions of the~form $\varphi\A$, $\A\in\spa\{\Wb_j\}_{j=1}^{\infty}$, in the~space $L^{(q+2)'}(0,T;W^{N,2}(\Omega;\Sym))$. This way, defining also
		\begin{equation*}
			\Sb_{\ell}^{\omega}\coloneqq 2\nu(\theta_{\ell})|\Dv_{\ell}|^2+2a\mu g_{\omega}(\B_{\ell},\theta_{\ell})\theta_{\ell}\B_{\ell},
		\end{equation*}
		we obtain
		\begin{align}\begin{aligned}
				&(\partial_t\ve_{\ell},\we_i)-(\ve_{\ell}\otimes\ve_{\ell},\nn\we_i)+(\Sb_{\ell}^{\omega},\nn\we_i)+\alpha(\ve_{\ell},\we_i)_{\partial\Omega}=(\fe,\we_i)\\ &\hspace{2cm} \text{ for every }i=1,\ldots,\ell, \textrm{ and a.e. in } (0,T),\label{sys1v}
		\end{aligned}\end{align}
		and
		\begin{align}\begin{aligned}
				&\scal{\partial_t\B_{\ell},\A}-(\B_{\ell}\otimes\ve_{\ell},\nn\A)_{\Q}+(\PP(\theta_{\ell},\B_{\ell}),\A)_{\Q}+(\la(\theta_{\ell})\nn\B_{\ell},\nn\A)_{\Q}\\
				&\quad=(2g_{\omega}(\B_{\ell},\theta_{\ell})(a\Dv_{\ell}+\Wv_{\ell})\B_{\ell},\A)_{\Q}\\
				&\hspace{3cm}\text{for all }\A\in L^{q+2}(0,T;W^{N,2}(\Omega;\Sym)).\label{sys1B}
		\end{aligned}\end{align}
		However, the~space of test functions in~\eqref{sys1B} can be enlarged using a~standard density argument. Indeed, using H\"older's inequality, it is easy to see that every term of \eqref{sys1B} (taking aside the~time derivative) is well defined provided that
		\begin{equation*}
			\A\in L^2(0,T;W^{1,2}(\Omega;\Sym))\cap L^{q+2}(\Q;\Sym)
		\end{equation*}
		and thus, we can read from \eqref{sys1B} that
		\begin{equation*}
			\partial_t\B_{\ell}\in \left(L^2(0,T;W^{1,2}(\Omega;\Sym))\cap L^{q+2}(\Q;\Sym)\right)^*.
		\end{equation*}
		Since we also have that
		\begin{equation}\label{Bellreg}
			\B_{\ell}\in L^2(0,T;W^{1,2}(\Omega;\Sym))\cap L^{q+2}(\Q;\Sym),
		\end{equation}
		it follows from Lemma~\ref{Temb} below that
		\begin{equation}\label{BCo}
			\B_{\ell}\in \CO([0,T];L^2(\Omega;\Sym)).
		\end{equation}
		The~value of $\B_{\ell}(0)$ can be identified by a~standard argument, which we briefly outline here.  Using $\A(t,x)=\psi(t)\PP(x)$ in~\eqref{sys1B}, where $\psi\in \CO^1([0,T];\R)$, $\psi(0)=1$, $\psi(T)=0$, and $\PP\in W^{N,2}(\Omega;\Sym)$, one gets, after integration by parts, that
		\begin{align}\begin{aligned}\label{id0}
				(\B_{\ell}(0),\PP)&=-(\B_{\ell},\PP\partial_t\psi)_{\Q}+(\ve_{\ell}\cdot\nn\B_{\ell},\PP\psi)_{\Q}+(\PP(\theta_{\ell},\B_{\ell}),\PP\psi)_{\Q}\\
				&\qquad-(\la(\theta_{\ell})\nn\B_{\ell},\nn\PP\psi)_{\Q}-(2g_{\omega}(\B_{\ell},\theta_{\ell})(a\Dv_{\ell}+\Wv_{\ell})\B_{\ell},\PP\psi)_{\Q}.\end{aligned}
		\end{align}
		On the~other hand, exactly the~same expression can be obtained also for $(\B_0^{\omega},\PP)$ if one multiplies \eqref{galB} by $\psi$, integrate over $(0,T)$ and by parts in the~time derivative using \eqref{galic}
		and uses completeness of $\{\Wb_j\}_{j=1}^{\infty}$ in $W^{N,2}(\Omega;\Sym)$ and the~same arguments as before to take the~limit $n\to\infty$. But since $\PP$ was arbitrary and $W^{N,2}(\Omega;\Sym)$ is dense in $L^2(\Omega;\Sym)$, we conclude 
		\begin{equation}\label{BB}
			\B_{\ell}(0)=\B_0^{\omega}.
		\end{equation}
		We can use an~analogous procedure to identify $\ve_{\ell}(0)$, but here the~situation is simpler since \eqref{strov} directly implies $\ve_{\ell}\in \CO([0,T];W^{1,\infty}(\Omega;\R^d))$ and we obtain
		\begin{equation}\label{vv}
			\ve_{\ell}(0)=P_{\ell}\ve_0.
		\end{equation}
		
		Our aim is now to take the~limit in equation~\eqref{galt}, where we need to justify the~limit in the~terms $\kappa(\theta_{\ell n})\nn\theta_{\ell n}$ and $|\nn\theta_{\ell n}|^r\nn\theta_{\ell n}$  (the term $2\nu(\theta_{\ell n})|\Dv_{\ell n}|^2$ is easy due to \eqref{strov}).  For the~first one, we use \eqref{Akap}, \eqref{strt} and Vitali's theorem to get
		\begin{align}\label{kap}
			\sqrt{\kappa(\theta_{\ell n})}&\to\sqrt{\kappa(\theta_{\ell})}\quad\text{strongly in }L^{2+\f4r}(\Q;\R)
		\end{align}
		and then we combine this with \eqref{thw}, to obtain
		\begin{align}\label{wctok}
			\sqrt{\kappa(\theta_{\ell n})}\nn\theta_{\ell n}&\wc\sqrt{\kappa(\theta_{\ell})}\nn\theta_{\ell}\quad\text{weakly in }L^1(\Q;\R^d).
		\end{align}
		However, by \eqref{test} we know that \eqref{wctok} is valid also in $L^2(\Q;\R^d)$ up to a~subsequence, and hence, using again \eqref{kap}, we obtain
		\begin{equation}\label{kapf}
			\kappa(\theta_{\ell n})\nn\theta_{\ell n}=\sqrt{\kappa(\theta_{\ell n})}\sqrt{\kappa(\theta_{\ell n})}\nn\theta_{\ell n}\wc\sqrt{\kappa(\theta_{\ell})}\sqrt{\kappa(\theta_{\ell})}\nn\theta_{\ell}=\kappa(\theta_{\ell})\nn\theta_{\ell}
		\end{equation}
		weakly in $L^{\f{r+2}{r+1}}(\Q;\R^d)$.
		
		
		Finally, due to~\eqref{test}, there exists $K\in L^{(r+2)'}(\Q;\R^d)$ such that
		\begin{equation}\label{posuk0}
			|\nn\theta_{\ell n}|^r\nn\theta_{\ell n}\wc K\quad\text{weakly in }L^{(r+2)'}(\Q;\R^d).
		\end{equation}
		Then, using also \eqref{kapf} and previous convergence results, we can take the~limit in~\eqref{galt} and obtain, for all $\tau\in L^{r+2}(0,T;W^{N,2}(\Omega;\R))$, that
		\begin{equation}\label{tep0}
			\scal{c_v\partial_t\theta_{\ell},\tau}-(c_v\theta_{\ell}\ve_{\ell},\nn\tau)_{\Q}+(\kappa(\theta_{\ell})\nn\theta_{\ell},\nn\tau)_{\Q}+\omega(K,\nn\tau)_{\Q}=(\Sb_{\ell}^{\omega}\cdot\Dv_{\ell},\tau)_{\Q}.
		\end{equation}
		Recalling \eqref{tep}, \eqref{kapf} and \eqref{posuk0}, we easily conclude, using a~density argument, that \eqref{tep0} is valid for all $\tau\in L^{r+2}(0,T;W^{1,r+2}(\Omega;\R))$ and that the~time derivative extends to the~functional $\partial_t\theta_{\ell}\in L^{(r+2)'}(0,T;W^{-1,(r+2)'}(\Omega;\R))$. Thus, using Lemma~\ref{Temb} below, we also see that
		\begin{equation}\label{thC}
			\theta_{\ell}\in \CO([0,T];L^2(\Omega;\R)).
		\end{equation}
		Furthermore, choosing $\tau=\theta_{\ell}$ in~\eqref{tep0}, rewriting the~time derivative term and integrating by parts in the~convective term leads to
		\begin{equation}\label{neco1}
			\omega(K,\nn\theta_{\ell})_{\Q}=\f{c_v}2(\norm{\theta_{\ell}(0)}_2^2-\norm{\theta_{\ell}(T)}_2^2)-\int_{\Q}\kappa(\theta_{\ell})|\nn\theta_{\ell}|^2+(\Sb_{\ell}^{\omega}\cdot\Dv_{\ell},\theta_{\ell})_{\Q}.
		\end{equation}
		We use this information to identify $K$ as follows. We note that weak lower semi-continuity and \eqref{wctok} (which is valid in $L^2(\Q;\R^d)$) imply
		\begin{equation}\label{posuk}
			\int_{\Q}\kappa(\theta_{\ell})|\nn\theta_{\ell}|^2\leq \liminf_{n\to\infty}\int_{\Q}\kappa(\theta_{\ell n})|\nn\theta_{\ell n}|^2.
		\end{equation}
		Thus, if we integrate \eqref{potest} over $(0,T)$ and use \eqref{posuk}, \eqref{strov}, weak lower semi-continuity of $\norm{\cdot}_{2}$ and the~convergence results above to take the~limes superior $n\to\infty$ and then apply \eqref{neco1}, we get
		\begin{equation}\begin{aligned}\label{identif}
				&\omega\limsup_{n\to\infty}\int_{\Q}|\nn\theta_{\ell n}|^{r+2}\\
				&\quad=-\liminf_{n\to\infty}\f{c_v}2\norm{\theta_{\ell n}(T)}_2^2+\f{c_v}2\norm{\theta_0^{\omega}}_2^2-\liminf_{n\to\infty}\int_{\Q}\kappa(\theta_{\ell n})|\nn \theta_{\ell n}|^2\\
				&\quad\quad+\lim_{n\to\infty}(\Sb_{\ell n}^{\omega}\cdot\Dv_{\ell n},\theta_{\ell n})_{\Q}\\
				&\quad\leq-\f{c_v}2\norm{\theta_{\ell}(T)}_2^2+\f{c_v}2\norm{\theta_0^{\omega}}_2^2-\int_{\Q}\kappa(\theta_{\ell})|\nn \theta_{\ell}|^2+(\Sb_{\ell}^{\omega}\cdot\Dv_{\ell},\theta_{\ell})_{\Q}\\
				&\quad=\f{c_v}2\norm{\theta_0^{\omega}}_2^2-\f{c_v}2\norm{\theta_{\ell}(0)}_2^2+\omega(K,\nn\theta_{\ell})_{\Q}.
		\end{aligned}\end{equation}
		To identify the~initial condition for $\theta_{\ell}(0)$, it is enough to show that
		\begin{equation}\label{enough}
			\theta_{\ell}(t)\wc\theta_0^{\omega}\quad\text{weakly in }L^2(\Omega;\R)
		\end{equation}
		as $t\to0+$ since then we can use \eqref{thC} to conclude
		\begin{equation}\label{t0}
			\theta_{\ell}(0)=\theta_0^{\omega}\quad\text{a.e.\ in }\Omega
		\end{equation}
		by the~uniqueness of a~(weak) limit. To prove \eqref{enough}, we return to~\eqref{galt}, which we multiply by $\varphi\in W^{1,\infty}(0,T;\R)$ fulfilling $\varphi(0)=1$, $\varphi(T)=0$ and integrate the~result over $(0,T)$ to get
		\begin{equation}\begin{aligned}\label{alie}
				&-(c_v\theta_0^{\omega},w_k)-\int_0^T(c_v\theta_{\ell n},w_k)\partial_t\varphi=\int_0^Th_n\varphi.
		\end{aligned}\end{equation}
		for all $k=1,\ldots,n$, where we integrated by parts and abbreviated 
		\begin{equation*}
			h_n=(c_v\theta_{\ell n}\ve_{\ell n},\nn w_k)-(\kappa(\theta_{\ell n})\nn\theta_{\ell n}+\omega|\nn\theta_{\ell n}|^r\nn\theta_{\ell n},\nn w_k)+(\Sb_{\ell n}^{\omega}\cdot\Dv_{\ell n},w_k).
		\end{equation*}
		It follows from the~results above (cf.\ the~derivation of \eqref{tep0}) that
		\begin{equation*}
			h_n\wc h\quad\text{weakly in }L^{(r+2)'}(0,T;\R),
		\end{equation*}
		where
		\begin{equation*}
			\begin{aligned}
				h&=(c_v\theta_{\ell}\ve_{\ell},\nn w_k)-(\kappa(\theta_{\ell})\nn\theta_{\ell},\nn w_k)-\omega(K,\nn w_k)+(\Sb_{\ell}^{\omega}\cdot\Dv_{\ell},w_k).
			\end{aligned}
		\end{equation*}
		Thus, by taking the~limit $n\to\infty$ in~\eqref{alie}, we arrive at
		\begin{equation*}
			\begin{aligned}
				&-(c_v\theta_0^{\omega},w_k)-\int_0^T(c_v\theta_{\ell},w_k)\partial_t\varphi=\int_0^Th\varphi.
			\end{aligned}
		\end{equation*}
		Making now a~special choice
		\begin{equation*}
			\varphi_{\eps}(s)=\left\{\begin{matrix}1&s\leq t,\\1-\f{s-t}{\eps}&s\in(t,t+\eps),\\0&s\geq t+\eps,\end{matrix}\right.
		\end{equation*}
		where $t\in(0,T)$ and $0<\eps<T-t$, leads to
		\begin{equation*}
			\begin{aligned}
				&-(c_v\theta_0^{\omega},w_k)+\f1{\eps}\int_t^{t+\eps}(c_v\theta_{\ell},w_k)=\int_0^{t+\eps}h\varphi_{\eps}.
			\end{aligned}
		\end{equation*}
		Furthermore, we can take the~limit $\eps\to0+$ in this equation using \eqref{thC} on the~left hand side and absolute continuity of integral on the~right hand side to get
		\begin{equation*}
			-(c_v\theta_0^{\omega},w_k)+(c_v\theta_{\ell}(t),w_k)=\int_0^tf.
		\end{equation*}
		Finally, taking the~limit $t\to0+$ yields
		\begin{equation*}
			\lim_{t\to0+}(\theta_{\ell}(t),w_k)=(\theta_0^{\omega},w_k),
		\end{equation*}
		for all $k=1,\ldots,n$, from which \eqref{enough} follows by exploiting the~density of the~set $\spa\{w_k\}_{k=1}^{\infty}$ in $L^2(\Omega;\R)$. Hence, the~identity~\eqref{t0} is proved and \eqref{identif} hereby simplifies to
		\begin{equation}\label{limsup}
			\limsup_{n\to\infty}\int_{\Q}|\nn\theta_{\ell n}|^{r+2}\leq\int_{\Q}K\cdot\nn\theta_{\ell}.
		\end{equation}
		Since the~operator $\uu\mapsto |\uu|^r\uu$ is monotone and continuous, it is standard to show, using \eqref{limsup} and the~Minty method, that
		\begin{equation*}
			K=|\nn\theta_{\ell}|^r\nn\theta_{\ell}\quad \text{a.e.\ in }\Q.
		\end{equation*}
		Hence, we proved that
		\begin{equation}\label{sys1t}
			\scal{c_v\partial_t\theta_{\ell},\tau}-(c_v\theta_{\ell}\ve_{\ell}\,\nn\tau)_{\Q}+(\kappa(\theta_{\ell})\nn\theta_{\ell}+\omega|\nn\theta_{\ell}|^r\nn\theta_{\ell},\nn\tau)_{\Q}=(\Sb_{\ell}^{\omega}\cdot\Dv_{\ell},\tau)_Q
		\end{equation}
		for all $\tau\in L^{r+2}(0,T;W^{1,r+2}(\Omega;\R))$.
		
		\subsection*{Positive definiteness of \texorpdfstring{$\B_{\ell}$}{B} and positivity of \texorpdfstring{$\theta_{\ell}$}{t}}
		
		Here we follow the~method developed in~\cite{Bathory_2020}.  We shall use the~notation
		\begin{equation*}
			h_+=\max\{0,h\},\qquad h_-=\min\{0,h\}.
		\end{equation*}
		We choose a~fixed vector $\x\in\R^d$ with $|\x|=1$, and $t\in(0,T)$. The~idea is to use
		\begin{equation*}
			\A_{\x}=\chi_{(0,t)}(b-\omega)_-\,\x\otimes\x,\quad\text{where}\quad b\coloneqq \B_{\ell}\x\cdot\x.
		\end{equation*}
		in \eqref{sys1B}. The~function $\A_{\x}$ belongs to $L^2(0,T;W^{1,2}(\Omega;\Sym))\cap L^{q+2}(\Q;\Sym)$ and is thus a~valid test function in~\eqref{sys1B}. The~key property of $\A_{\x}$ is that it vanishes whenever the~smallest eigenvalue of $\B_{\ell}$ is greater than $\omega$. Thus, we have
		\begin{equation*}
			(\Lambda(\B_{\ell})-\omega)_+(b-\omega)_-=0,
		\end{equation*}
		which implies
		\begin{equation}\label{vanish}
			g_{\omega}(\B_{\ell},\theta_{\ell})\A_{\x}=0\quad\text{a.e. in }\Q.
		\end{equation}
		Let us now evaluate separately the~terms arising from the~choice $\A=\A_{\x}$ in \eqref{sys1B}.  For the~time derivative, we write
		\begin{equation}\label{op}
			\scal{\partial_t\B_{\ell},\A_{\x}}=\int_0^t\scal{\partial_t(b-\omega),(b-\omega)_-}=\tfrac12\norm{(b-\omega)_-(t)}_2^2,
		\end{equation}
		where we applied Lemma~\ref{Ldual} below for the~Lipschitz function $s\mapsto s_-$ and also \eqref{BB} and \eqref{0zesp}  to eliminate the~value at $t=0$. 
		Furthermore, using integration by parts, $\ve_{\ell}\cdot\n=0$ and $\di\ve_{\ell}=0$, we get
		\begin{equation*}	(\B_{\ell}\otimes\ve_{\ell},\nn\A_{\x})_{\Q}=\int_0^t((b-\omega)\ve_{\ell},\nn(b-\omega)_-)=\f12\int_0^t\int_{\partial\Omega}((b-\omega)_-)^2\ve_{\ell}\cdot\n=0
		\end{equation*}
		and also
		\begin{align*}
			(\la(\theta_{\ell})\nn\B_{\ell},\nn\A_{\x})_{\Q}=\int_0^t\norm{\sqrt{\la(\theta_{\ell})}\nn(b-\omega)_-}_2^2\geq 0.
		\end{align*}
		Moreover, we have $b-\omega_P<b-\omega$ and thus, the~assumption \eqref{APpd} yields
		\begin{align*}
			(\PP(\theta_{\ell},\B_{\ell}),\A_{\x})_{\Q}&=\int_0^t\ii(b-\omega)_-\,\PP(\theta_{\ell},\B_{\ell})\x\cdot\x\\
			&=\int_0^t\int\limits_{\{b<\omega\}}(b-\omega)\PP(\theta_{\ell},(\B_{\ell}-\omega_P\I)+\omega_P\I)\x\cdot\x\geq0.
		\end{align*}
		In addition, the~right hand side of \eqref{sys1B} vanishes due to~\eqref{vanish}.
		Thus, using the~above computation in~\eqref{sys1B}, we obtain
		\begin{equation*}
			\norm{(b-\omega)_-(t)}_2^2\leq0
		\end{equation*}
		for all $t\in(0,T)$ (recall \eqref{BCo}), whence
		\begin{equation}\label{pd}
			\B_{\ell}(t)\x\cdot\x\geq\omega|\x|^2\quad\text{a.e.\ in }\Omega,\text{ for all }t\in(0,T)\text{ and for every }\x\in\R^d.
		\end{equation}
		Note that this immediately yields $\B_{\ell}\in\PD$, $\B_{\ell}^{-1}\in\PD$ a.e.\ in $\Q$, and thus
		\begin{equation}\label{Binvom}
			|\B_{\ell}^{-1}|=|\B_{\ell}^{-\f12}\B_{\ell}^{-\f12}|\leq|\B_{\ell}^{-\f12}|^2=\tr\B_{\ell}^{-1}\leq\f d{\omega}.
		\end{equation}
		Also, using the~identity
		\begin{equation*}
			\nn\B_{\ell}^{-1}=-\B_{\ell}^{-1}\nn\B_{\ell}\B_{\ell}^{-1},
		\end{equation*}
		(which is standard for continuously differentiable functions and in general we can approximate $\B_{\ell}$ by smooth mappings and pass to the~limit) and \eqref{best} we conclude that $\B_{\ell}^{-1}$ exists a.e.\ in $\Q$ and satisfies
		\begin{equation}\label{Binv}
			\B_{\ell}^{-1}\in L^{\infty}(0,T;L^{\infty}(\Omega;\PD))\cap L^2(0,T;W^{1,2}(\Omega;\PD)).
		\end{equation}
		Moreover, recalling $f$ from \eqref{helm} and using the~simple inequalities
		\begin{equation*}
			\det\B_{\ell}\geq\omega^d\quad\text{and}\quad |\ln x|\leq x+\f1x,\;x>0,
		\end{equation*}
		it is easy to see that also
		\begin{equation*}
			f(\B_{\ell})\in L^2(0,T;W^{1,2}(\Omega;\R_{\geq0}))\cap L^{q+2}(\Q;\R_{\geq0}).
		\end{equation*}
		
		Next, we prove positivity of $\theta_{\ell}$. Since $\theta_{\ell}\in L^{r+2}(0,T;W^{1,r+2}(\Omega;\R))$, we can use the~analogous method as before. Indeed, we start by choosing
		\begin{equation*}
			\tau=\chi_{(0,t)}(\theta_{\ell}-\omega)_-\in L^{r+2}(0,T;W^{1,r+2}(\Omega;\R))
		\end{equation*}
		as a~test function in~\eqref{sys1t} to get (using $\di \ve_{\ell} =0$)
		\begin{align}\begin{aligned}
				&\f{c_v}2\norm{(\theta_{\ell}-\omega)_-(t)}_2^2-\f{c_v}2\norm{(\theta_{\ell}-\omega)_-(0)}_2^2\\
				&\quad+\int_0^t\norm{\sqrt{\kappa(\theta_{\ell})}\nn(\theta_{\ell}-\omega)_-}_2^2+\int_0^t\norm{\nn(\theta_{\ell}-\omega)_-}_{r+2}^{r+2}\\
				&\qquad=\int_0^t\big(\Sb_{\ell}^{\omega}\cdot\Dv_{\ell},(\theta_{\ell}-\omega)_-\big)\leq0.\label{ofh}\end{aligned}
		\end{align}
		Hence, using $\theta_{\ell}(0)=\theta_0^{\omega}\geq\omega$ in $\Omega$ and \eqref{thC}, we obtain that $\norm{(\theta_{\ell}(t)-\omega)_-}_2=0$ for all $t\in(0,T)$,
		which means
		\begin{equation}\label{pt}
			\theta_{\ell}(t)\geq\omega\quad\text{a.e.\ in }\Omega\text{ and for all }t\in(0,T).
		\end{equation}
		Consequently, since $\nn\theta_{\ell}^{-1}=\theta^{-2}_{\ell}\nn\theta_{\ell}$, we also obtain
		\begin{equation}\label{tinv}
			\theta_{\ell}^{-1}\in L^{\infty}(0,T;L^{\infty}(\Omega;\R_{>0}))\cap L^{r+2}(0,T;W^{1,r+2}(\Omega;\R_{>0})).
		\end{equation}
		From these findings we also easily read that
		\begin{equation*}
			|\ln\theta_{\ell}|\leq\theta_{\ell}+\f1{\theta_{\ell}}\leq \theta_{\ell}+\f1{\omega}\quad\text{and}\quad|\nn\ln\theta_{\ell}|=\f{|\nn\theta_{\ell}|}{\theta_{\ell}}\leq\f1{\omega}|\nn\theta_{\ell}|,
		\end{equation*}
		hence also
		\begin{equation*}
			\ln\theta_{\ell}\in L^{r+2}(0,T;W^{1,r+2}(\Omega;\R)).
		\end{equation*}
		
		\subsection*{Entropy equation}
		In order to take the~remaining limits $\ell\to\infty$ and $\omega\to0+$, we need to derive the~entropy (in)equality from which  we then deduce that $\det\B_{\ell}$ and $\theta_{\ell}$ remain {\it strictly} positive a.e.\ in $\Q$. First, we rewrite \eqref{sys1t} in the~form
		\begin{equation}\label{sys1t2}
			\scal{c_v\partial_t\theta_{\ell},\tau}+(c_v\ve_{\ell}\cdot\nn\theta_{\ell},\tau)+(\kappa(\theta_{\ell})\nn\theta_{\ell}+\omega|\nn\theta_{\ell}|^r\nn\theta_{\ell},\nn\tau)=(\Sb_{\ell}^{\omega}\cdot\Dv_{\ell},\tau)
		\end{equation}
		for all $\tau\in W^{1,r+2}(\Omega;\R)$ and a.e.\ in $(0,T)$.
		Then, we take $\phi\in W^{1,\infty}(\Omega;\R)$ and note that $\tau=\theta_{\ell}^{-1}\phi$ can be used as a~test function in~\eqref{sys1t2} thanks to~\eqref{tinv}. This way, we get
		\begin{align}\begin{aligned}\label{1test}
				&\langle c_v\partial_t\theta_{\ell},\theta_{\ell}^{-1}\phi\rangle+(c_v\ve_{\ell}\cdot\nn\ln\theta_{\ell},\phi)+(\kappa(\theta_{\ell})\nn\ln\theta_{\ell},\nn\phi)-(\kappa(\theta_{\ell})|\nn\ln\theta_{\ell}|^2,\phi)\\
				&\quad+\omega(|\nn\theta_{\ell}|^r\nn\ln\theta_{\ell},\nn\phi)-\omega(|\nn\theta_{\ell}|^r|\nn\ln\theta_{\ell}|^2,\phi)\\
				&\qquad=(2\nu(\theta_{\ell})\theta_{\ell}^{-1}|\Dv_{\ell}|^2+2a\mu g_{\omega}(\B_{\ell},\theta_{\ell})\B_{\ell}\cdot\Dv_{\ell},\phi)\end{aligned}
		\end{align}
		a.e.\ in $(0,T)$. Similarly, we observe that $f'(\B_{\ell})\phi=\mu(\I-\B_{\ell}^{-1})\phi$  (recall \eqref{calc1}, \eqref{calc2})  is a~valid test function in \eqref{sys1B} due to~\eqref{Binv}. Thus, we obtain
		\begin{equation}\begin{aligned}\label{2test}
				&\scal{\partial_t\B_{\ell},f'(\B_{\ell})\phi}+(\ve_{\ell}\cdot\nn f(\B_{\ell}),\phi)\\[-0.1cm]
				&\quad+(\mu\PP(\theta_{\ell},\B_{\ell})\cdot(\I-\B_{\ell}^{-1}),\phi)+(\mu\la(\theta_{\ell})|\B_{\ell}^{-\f12}\nn\B_{\ell}\B_{\ell}^{-\f12}|^2,\phi)\hskip0.2cm\\
				&\quad\quad=-(\la(\theta_{\ell})\nn f(\B_{\ell}),\nn\phi)+(2a\mu g_{\omega}(\B_{\ell},\theta_{\ell})\B_{\ell}\cdot\Dv_{\ell},\phi)
		\end{aligned}\end{equation}
		a.e.\ in $(0,T)$. If we define
		\begin{equation}\label{etadef}
			\eta_{\ell}\coloneqq c_v\ln\theta_{\ell}- f(\B_{\ell})
		\end{equation}
		and
		\begin{align}\begin{aligned}\label{xil}
				\xi_{\ell}&\coloneqq2\nu(\theta_{\ell})\theta_{\ell}^{-1}|\Dv_{\ell}|^2+\kappa(\theta_{\ell})|\nn\ln\theta_{\ell}|^2+\omega|\nn\theta_{\ell}|^r|\nn\ln\theta_{\ell}|^2\\
				&\qquad\qquad\qquad+\mu \PP(\theta_{\ell},\B_{\ell})\cdot(\I-\B_{\ell}^{-1})+\mu\la(\theta_{\ell})|\B_{\ell}^{-\f12}\nn\B_{\ell}\B^{-\f12}_{\ell}|^2
		\end{aligned}\end{align}
		and subtract \eqref{2test} from \eqref{1test}, we get
		\begin{equation}\begin{aligned}\label{enteq}
				&\langle c_v\partial_t\theta_{\ell},\theta_{\ell}^{-1}\phi\rangle-\scal{\partial_t\B_{\ell},f'(\B_{\ell})\phi}+(\ve_{\ell}\cdot\nn\eta_{\ell},\phi)\\
				&\quad+\big((\kappa(\theta_{\ell})+\omega|\nn\theta_{\ell}|^r)\nn\ln\theta_{\ell}-\la(\theta_{\ell})\nn f(\B_{\ell}),\nn\phi\big)=(\xi_{\ell},\phi)\end{aligned}
		\end{equation}
		a.e.\ in $(0,T)$ and for all $\phi\in W^{1,\infty}(\Omega;\R)$. It remains to rewrite the~time derivative accordingly. Concerning the~term containing $\partial_t\theta_{\ell}$, note that $\psi(s)=\max\{|s|,\omega\}^{-1}$, $s\in\R$, is a~bounded Lipschitz function. Since $\theta_{\ell}\geq\omega$ a.e.\ in $\Q$ by \eqref{pt}, we get
		\begin{equation*}
			\int_1^{\theta_{\ell}}\psi(s)\dd{s}=\int_1^{\theta_{\ell}}\f1s\dd{s}=\ln\theta_{\ell}.
		\end{equation*}
		Thus, Lemma~\ref{Ldual} below yields
		\begin{equation*}
			\langle c_v\partial_t\theta_{\ell},\theta_{\ell}^{-1}\phi\rangle=\f{\dd{}}{\dd{t}}(c_v\ln\theta_{\ell},\phi).
		\end{equation*}
		If we multiply this by $\varphi\in W^{1,\infty}((0,T);\R)$ with $\varphi(T)=0$, integrate over $(0,T)$ and by parts, we are led to
		\begin{equation}\label{tide}
			\langle c_v\partial_t\theta_{\ell},\theta_{\ell}^{-1}\phi\varphi\rangle=-( c_v\ln\theta_{\ell},\phi\partial_t\varphi)_{\Q}-(c_v\ln\theta_{0}^{\omega},\phi\varphi(0)),
		\end{equation}
		where we also used \eqref{t0}. Analogous ideas can be used to rewrite the~second term of \eqref{enteq}. However, since the~duality $\scal{\partial_t\B_{\ell},f'(\B_{\ell})\phi}$ cannot be interpreted entry-wise, let us proceed more carefully. We apply Lemma~\ref{Temb} below to obtain functions $\B_{\ell}^{\eps}\in \CO^1([0,T];W^{1,2}(\Omega;\PD)\cap L^{q+2}(\Omega;\PD))$, $\eps>0$, such that
		\begin{equation}\label{strr}
			\norm{\B_{\ell}^{\eps}-\B_{\ell}}_{L^2W^{1,2}\cap L^{q+2}L^{q+2}}+\norm{\partial_t\B_{\ell}^{\eps}-\partial_t\B_{\ell}}_{(L^2W^{1,2}\cap L^{q+2}L^{q+2})^*}\to0
		\end{equation}
		as $\eps\to0+$ and also $\Lambda(\B_{\ell}^{\eps})\geq\omega$ a.e.\ in $\Q$.  For such regularization, we have
		\begin{equation}\label{prel}
			\scal{\partial_t\B_{\ell}^{\eps},f'(\B_{\ell}^{\eps})\phi\varphi}
			=-(f(\B_{\ell}^{\eps}(0)),\phi\varphi(0))-(f(\B_{\ell}^{\eps}),\phi\partial_t\varphi)_{\Q}
		\end{equation}
		by the~standard calculus and it remains to justify the~limit $\eps\to0+$ on both sides of \eqref{strr}.  Since $\B_{\ell}\in\CO([0,T];L^2(\Omega))$ (cf. \eqref{BCo}), we know that
		\begin{equation}\label{uniB}
			\norm{\B_{\ell}^{\eps}-\B_{\ell}}_2\rightrightarrows0\quad\text{uniformly in }[0,T].
		\end{equation}
		Now it is important to observe that since we have $\Lambda(\B_s)\geq\omega$ for all $s\in[0,1]$, where
		\begin{equation*}
			\B_s\coloneqq(1-s)\B_{\ell}+s\B_{\ell}^{\eps},
		\end{equation*}
		the convergence \eqref{uniB} actually also implies
		\begin{equation}\label{Uni}
			\norm{f(\B_{\ell}^{\eps})-f(\B_{\ell})}_2+\norm{(\B_{\ell}^{\eps})^{-1}-\B_{\ell}^{-1}}_2\rightrightarrows0\quad\text{uniformly in }[0,T].
		\end{equation}
		Indeed, this is a~simple consequence of the~identities
		\begin{align}
			f(\B_{\ell}^{\eps})- f(\B_{\ell})&=\int_0^1\f{\dd{}}{\dd{s}}f(\B_s)\dd{s}=\int_0^1\mu(\I-\B_s^{-1})\cdot(\B_{\ell}^{\eps}-\B_{\ell})\dd{s},\label{comp1}\\
			(\B_{\ell}^{\eps})^{-1}- \B_{\ell}^{-1}&=\int_0^1\f{\dd{}}{\dd{s}}\B_s^{-1}\dd{s}=-\int_0^1\B_s^{-1}(\B_{\ell}^{\eps}-\B_{\ell})\B_s^{-1}\dd{s},\nonumber
		\end{align}
		\eqref{uniB} and the~estimate 
		\begin{equation*}
			|\B_s^{-1}|\leq\tr\B_s^{-1}\leq  \f{d}{\Lambda(\B_s)}\leq\f{d}{\omega}. 
		\end{equation*}
		Using the~same scheme as in \eqref{comp1}, we also deduce from \eqref{BCo} and \eqref{BB} that 
		\begin{equation}\label{psi00}
			f(\B_{\ell})\in\CO(0,T;L^2(\Omega;\R)),\quad  f(\B_{\ell}(0))= f(\B_0^{\omega}).
		\end{equation}
		This and $\eqref{Uni}_1$ allow us to pass to the~desired limit on the~right-hand side of \eqref{prel}.  Next, using \eqref{Binvom}, we can estimate, for any $\phi\in W^{1,\infty}(\Omega;\R)$, that
		\begin{align*}
			|\nn(f'(\B_{\ell}^{\eps})\phi)|&=|(\B_{\ell}^{\eps})^{-1}\nn\B_{\ell}^{\eps}(\B_{\ell}^{\eps})^{-1}\phi+(\I-(\B_{\ell}^{\eps})^{-1})\nn\phi|\\&\leq C\omega^{-2}|\nn\B_{\ell}^{\eps}||\phi|+(1+C\omega^{-1})|\nn\phi|.
		\end{align*}
		Using the~second line of this estimate to show boundedness and the~first line to identify the~weak $\eps$-limit using $\eqref{Uni}_2$ and \eqref{strr},  we eventually obtain
		\begin{equation*}
			f'(\B_{\ell}^{\eps})\phi\wc f'(\B_{\ell})\phi\text{ weakly in }L^2(0,T;W^{1,2}(\Omega;\Sym))\cap L^{q+2}(\Q;\Sym).
		\end{equation*}
		If we apply this with \eqref{strr}, we get, for all $\varphi\in W^{1,\infty}((0,T);\R)$, $\varphi(T)=0$, that
		\begin{align*}
			&\left|\scal{\partial_t\B_{\ell}^{\eps},f'(\B_{\ell}^{\eps})\phi\varphi}-\scal{\partial_t\B_{\ell},f'(\B_{\ell})\phi\varphi}\right|\\
			&\qquad\leq\left|\scal{\partial_t\B_{\ell}^{\eps}-\partial_t\B_{\ell},f'(\B_{\ell}^{\eps})\phi\varphi}\right|+\left|\scal{\partial_t\B_{\ell}\varphi,f'(\B_{\ell}^{\eps})\phi-f'(\B_{\ell})\phi}\right|\to0
		\end{align*}
		as $\eps\to0+$.  This validates the~limit on the~left-hand side of \eqref{prel}, and thus 
		\begin{equation}\label{BIde}
			\scal{\partial_t\B_{\ell},f'(\B_{\ell})\phi\varphi}=-(f(\B_0^{\omega}),\phi\varphi(0))-(f(\B_{\ell}),\phi\partial_t\varphi)_{\Q}
		\end{equation}
		for all $\varphi\in W^{1,\infty}(\Omega;\R)$, $\varphi(T)=0$, and every $\phi\in W^{1,\infty}(\Omega;\R)$.  Therefore, after application of \eqref{tide} and \eqref{BIde}, entropy equation~\eqref{enteq} becomes 				
		\begin{align}\begin{aligned}\label{ENTl}
				&-(\eta_{\ell},\phi\partial_t\varphi)_{\Q}-(\eta_0^{\omega},\phi)\varphi(0)-(\ve_{\ell}\eta_{\ell},\nn\phi\varphi)_{\Q}\\
				&\quad+\big((\kappa(\theta_{\ell})+\omega|\nn\theta_{\ell}|^r)\nn\ln\theta_{\ell}-\la(\theta_{\ell})\nn f(\B_{\ell}),\nn\phi\varphi\big)_{\Q}=(\xi_{\ell},\phi\varphi)_{\Q}
		\end{aligned}\end{align}
		for all $\varphi\in W^{1,\infty}(0,T;\R)$, $\varphi(T)=0$, and $\phi\in W^{1,\infty}(\Omega;\R)$, where
		\begin{equation*}
			\eta_0^{\omega}\coloneqq c_v\ln\theta_0^{\omega}- f(\B_0^{\omega}).
		\end{equation*}
		Moreover, since $\ln\theta_{\ell}\in\CO([0,T];L^2(\Omega;\R))$ and \eqref{psi00} hold, we easily read
		\begin{equation}\label{etaC}
			\eta_{\ell}\in \CO([0,T];L^2(\Omega;\R)),\quad \eta_{\ell}(0)=\eta_0^{\omega}.
		\end{equation}
		
		\subsection*{Total energy equality}
		
		The integrated version of the~total energy equality is important in the~derivation of the~a~priori estimates below. We multiply the~$i$-th equation in~\eqref{sys1v} by $(\ve_{\ell},\we_i)$, sum up the~result over $i=1,\ldots,\ell$ and then we add \eqref{sys1t} with $\tau=1$. This way, after several cancellations using also $\eqref{s12bc}_1$, we obtain
		\begin{equation}\label{Ener0}
			\f{\dd{}}{\dd{t}}\ii E_{\ell}+\alpha \int_{\partial \Omega}|\ve_{\ell}|^2=(\fe,\ve_{\ell})\quad\text{a.e.\ in }(0,T),
		\end{equation}
		where $E_{\ell}\coloneqq\f12|\ve_{\ell}|^2+c_v\theta_{\ell}$.
		
		\section{Existence of a~weak solution: limits \texorpdfstring{$\omega\to0$}{w}, \texorpdfstring{$\ell\to\infty$}{l}}\label{SS5}		
		
		This is the~most essential part of the~paper. Here, we first rigorously derive the~estimates independent of $\omega$ and $\ell$ and then let $\omega \to 0+$ and $\ell \to \infty$  (in fact, we take these two limits simultaneously by setting $\omega=\f1{\ell}$).  Due to the~linearity of the~leading differential operators, the~limit passage is then relatively straightforward. On the~other hand, to obtain the~attainment of the~initial condition in the~strong topology, we need to develop a~new technique based on the~combination of the~entropy inequality and the~global energy inequality.

		\subsection*{Estimates independent of \texorpdfstring{$\ell,\omega$}{l} based on global energy and entropy}
		
		
		Let us first show that the~total energy of the~fluid remains bounded. In \eqref{Ener0}, we apply Young's inequality, \eqref{init} and $\theta_{\ell}>0$, to estimate
		\begin{equation*}
			\f{\dd{}}{\dd{t}}\ii E_{\ell}\leq \f12\ii|\ve_{\ell}|^2+ \f12\ii|\fe|^2\leq \ii E_{\ell}+\f12\ii|\fe|^2
		\end{equation*}
		a.e.\ in $(0,T)$. Hence, by the~Gronwall inequality, we get
		\begin{equation*}
			\ii E_{\ell}(t)\leq e^t\left(\ii E_{\ell}(0)+\f12\int_0^t\norm{\fe}_2^2\right)\quad\text{for all }t\in[0,T].
		\end{equation*}
		Then, we apply \eqref{vv}, \eqref{t0} to identify that
		\begin{equation*}
			E_{\ell}(0)=\f12|P_{\ell}\ve_0|^2+c_v\theta_0^{\omega}
		\end{equation*}
		and if we use properties of $P_{\ell}$, \eqref{entOO} and \eqref{init}, we arrive at
		\begin{equation}\label{th}
			\norm{\theta_{\ell}}_{L^{\infty}L^1}+\norm{\ve_{\ell}}_{L^{\infty}L^2}\leq C\norm{E_{\ell}}_{L^{\infty}L^1}\leq C.
		\end{equation}
		
		Now we turn our attention to~\eqref{ENTl}, which we localize in time by choosing\footnote{Strictly speaking, as $\chi_{(0,t)}$ is not Lipschitz, we can not use it directly in \eqref{ENTl}. However, a~standard argument using a~piecewise linear approximation of $\chi_{(0,t)}$ with the~Lebesgue differentiation theorem and absolute continuity of integral shows that $\chi_{(0,t)}$ is a~reasonable test function.} $\varphi=\chi_{(0,t)}$, leading to
		\begin{equation}\label{i}
			\ii\eta_{\ell}(t)\phi+\int_0^{t}\ii\je_{\ell}\cdot\nn\phi= \ii\eta^{\omega}_0\phi+\int_0^t\ii\xi_{\ell}\phi\quad\text{for all }\phi\in W^{1,\infty}(\Omega;\R)
		\end{equation}
		and all $t\in(0,T)$ (in fact, for all $t\in[0,T]$ due to continuity), where
		\begin{equation*}
			\je_{\ell}\coloneqq-\ve_{\ell}\eta_{\ell}+(\kappa(\theta_{\ell})+\omega|\nn\theta_{\ell}|^r)\nn\ln\theta_{\ell}-\la(\theta_{\ell})\nn f(\B_{\ell})\in L^1(\Q;\R^d).
		\end{equation*}
		In particular, taking $\phi=1$, we deduce, using $\xi_{\ell}\geq0$, that the~function $t\mapsto\ii\eta_{\ell}(t)$ is non-decreasing, and thus
		\begin{equation}\label{gg}
			\int_{\Q}\xi_{\ell}=\max_{t\in[0,T]}\int_0^t\ii\xi_{\ell}=\max_{t\in[0,T]}\ii\eta_{\ell}(t)-\ii\eta_0^{\omega}=\ii\eta_{\ell}(T)-\ii\eta_0^{\omega}.
		\end{equation}
		Then, using \eqref{etadef}, the~inequalities
		\begin{equation}\label{lnx}
			\ln x\leq x-1\quad \text{for all }x>0\quad\text{and}\quad
			f(\B_{\ell})\geq0,
		\end{equation}
		assumption \eqref{initln} and \eqref{th} (recall also \eqref{thC}), we obtain
		\begin{align}\begin{aligned}\label{upe}
				\int_{\Q}\xi_{\ell}&\leq\ii(c_v\ln\theta_{\ell}(T)- f(\B_{\ell}(T)))+C
				\leq C\ii\left(\theta_{\ell}(T)-1\right)+C\leq C,
		\end{aligned}\end{align}
		hence
		\begin{equation}\label{xiest}
			\norm{\xi_{\ell}}_{L^1L^1}\leq C.
		\end{equation}
		Also, it is easy to see using \eqref{init}, \eqref{initln}, \eqref{entOO}, \eqref{entO} and \eqref{gg} that
		\begin{equation}\label{etaint}
			\norm{\eta_{\ell}}_{L^{\infty}L^1}\leq C.
		\end{equation}
		Estimate \eqref{xiest} implies, using \eqref{Anu} and \eqref{APp}, that
		\begin{align}
			\begin{aligned}\label{EST}			\norm{\theta_{\ell}^{-\f12}\Dv_{\ell}}_{L^2L^2}+\norm{\sqrt{\kappa(\theta_{\ell})}\nn\ln\theta_{\ell}}_{L^2L^2}&+\sqrt{\omega}\norm{|\nn\theta_{\ell}|^{\f r2}\nn\ln\theta_{\ell}}_{L^2L^2}\\
				&+\norm{\B_{\ell}^{-\f12}\nn\B_{\ell}\B_{\ell}^{-\f12}}_{L^2L^2}\leq C.
			\end{aligned}
		\end{align}
		
		\subsection*{Improved \texorpdfstring{$\ell,\omega$}{l} estimates}
	
	In what follows, we improve the~uniform estimate \eqref{EST} considerably by choosing appropriate test functions in~\eqref{sys1B} and \eqref{sys1t} and then using \eqref{A0}. In fact, we repeat the~scheme of estimates presented in \eqref{Best}--\eqref{keys4}, but now, we prove it fully rigorously.
	
	Our aim is to set $\A:=\B_{\ell}^{q-1}$ in~\eqref{sys1B}. To verify that this is a~valid test function, we show first that $\B_{\ell}$ is actually essentially bounded. Indeed, setting first $\A=\chi_{(0,t)}\phi\I$, $t\in(0,T)$, $\phi\in L^{q+2}(0,T;L^{q+2}(\Omega;\R))\cap L^2(0,T;W^{1,2}(\Omega;\R))$, in \eqref{sys1B} yields
	\begin{align*}		&\int_0^t\scal{\partial_t\tr\B_{\ell},\phi}+\int_0^t(\ve\cdot\nn\tr\B_{\ell},\phi)+\int_0^t(\PP(\theta_{\ell},\B_{\ell})\cdot\I,\phi)+\int_0^t(\la(\theta_{\ell})\nn\tr\B_{\ell},\nn\phi)\\
		&\qquad=\int_0^t(2ag_{\omega}(\B_{\ell},\theta_{\ell})\B_{\ell}\cdot\Dv_{\ell},\phi).
	\end{align*}
	Hence, recalling \eqref{APcoer2} to bound the~third term on the~left hand side and using \eqref{godhad} and \eqref{strov} to estimate the~right hand side, we see that  there exists a~constant $C(\ell,\omega)>0$, such that
	\begin{align*}
		&\int_0^t\scal{\partial_t\tr\B_{\ell},\phi}+\int_0^t(\ve\cdot\nn\tr\B_{\ell},\phi)+\int_0^t(\la(\theta_{\ell})\nn\tr\B_{\ell},\nn\phi)\leq C(\ell,\omega)\int_0^t\int_{\Omega}|\phi|.
	\end{align*}
	Substituting $u(\x,t)\coloneqq\tr\B_{\ell}(\x,t)-C(\ell,\omega)t$ leads to
	\begin{align*}
		\int_0^t\scal{\partial_tu,\phi}+\int_0^t(\ve\cdot\nn u,\phi)+\int_0^t(\la(\theta_{\ell})\nn u,\nn\phi)\leq C(\ell,\omega)\int_0^t\int_{\Omega}(|\phi|-\phi).
	\end{align*}
	If we choose $\phi=(u-K)_+$ and use \eqref{s6d}, $\eqref{s12bc}_1$ to eliminate the~convective term, we obtain
	\begin{align*}
		&\frac12\norm{(u(t)-K)_+}_2^2+\int_0^t\norm{\sqrt{\la(\theta_{\ell})}\nn(u-K)_+}_2^2\leq\frac12\norm{(u(0)-K)_+}_2^2.
	\end{align*}
	If we let $K\coloneqq\f{d}{\omega}$, then $\eqref{BB}$ and \eqref{0sh} imply
	\begin{equation*}
		0\le (u(0)-K)_+=(\tr\B_0^{\omega}-\tfrac{d}{\omega})_+\leq(\sqrt{d}|\B_0^{\omega}|-\tfrac d{\omega})_+=0 \quad \textrm{ in } \Omega.
	\end{equation*}
	Thus, we get $\norm{(u(t)-\tfrac{d}{\omega})_+}_2^2=0$, hence
	\begin{equation*}
		|\B_{\ell}|\leq\tr\B_{\ell}\leq\tfrac{d}{\omega}+C_0t\leq\tfrac{d}{\omega}+C_0T
	\end{equation*}
	and we see that indeed
	\begin{equation}\label{Bsigg}
		\B_{\ell}\in L^{\infty}(0,T;L^{\infty}(\Omega;\PD))\cap L^2(0,T;W^{1,2}(\Omega;\PD)).
	\end{equation}
	Due to the~fact that $\B_{\ell}$ is strictly positive definite, we can use the~above property to show that the~same holds also for $\B_{\ell}^{q-1}$, which is essential for showing that $\A:=\B_{\ell}^{q-1}$ can be used in \eqref{sys1B} as a~test function. Indeed, the~boundedness of $\B_{\ell}^{q-1}$ is a~direct  consequence of \eqref{Bsigg} and the~spectral decomposition. To show that gradient of $\B_{\ell}^{q-1}$ is square integrable, we recall the~identity
	\begin{equation*}	
		\begin{aligned}
			&\frac{\nn\B_{\ell}^{q-1}}{q-1}= \int_0^1\!\!\int_0^1\B_{\ell}^{(1-s)(q-1)}((1\!-\!t)\I+t\B_{\ell})^{-1}\nn\B_{\ell}((1\!-\!t)\I+t\B_{\ell})^{-1}\B_{\ell}^{s(q-1)}\dd{s}\dd{t},
		\end{aligned}
	\end{equation*}
	which is a~consequence of the~well known identities for $\nn\exp\A$ and $\nn\log\A$, see e.g. \cite{Wilcox_1967,Wouk_1965,BathoryOdh} and references therein for details. Then  using also \eqref{pd} to estimate 
	\begin{equation*}
		|((1-t)\I+t\B_{\ell})^{-1}|\leq \f{\sqrt{d}}{\Lambda((1-t)\I+t\B_{\ell})}\leq\f{\sqrt{d}}{\omega}
	\end{equation*}
	and also \eqref{Bsigg}, we see that  $\nn\B_{\ell}^{q-1}\in L^2(0,T;L^2(\R\times\Sym))$ and consequently
	\begin{equation*}
		\B_{\ell}^{q-1}\in L^{\infty}(0,T;L^{\infty}(\Omega;\PD))\cap L^2(0,T;W^{1,2}(\Omega;\PD)).
	\end{equation*}
	Hence, setting $\A:=\chi_{[0,t]}\B_{\ell}^{q-1}$ in \eqref{sys1B}, using \eqref{APcoer}
	and the~identities\footnote{To interpret the~duality pairing in the~first identity, one has to approximate $\B_{\ell}$ similarly as before when dealing with $\scal{\partial_t\B_{\ell},(\I-\B^{-1}_{\ell})\phi}$.}
	\begin{align*}
		\scal{\partial_t\B_{\ell},\B^{q-1}_{\ell}}&=\f1{q}\ii\partial_t\tr\B_{\ell}^{q},\\
		(\ve\cdot\nn\B_{\ell},\B_{\ell}^{q-1})&=\f1{q}\ii\ve\cdot\nn\tr\B_{\ell}^{q}=0\\
		\intertext{and the~estimate (see (iv) and (v) in Lemma~\ref{PDalg} below)}
		\nn\B_{\ell}\cdot\nn\B_{\ell}^{q-1}& \geq\f{4(q-1)}{q^2}|\nn\B_{\ell}^{\f{q}2}|^2,
	\end{align*}
	we get
	\begin{align*}		
		&\frac1{q}\ii(\tr\B_{\ell}^{q}(t)-\tr\B_{\ell}^{q}(0))+C_{q-1}\int_0^t\ii|\B|^{2q}
		+\frac{4(q-1)}{q^2}\int_0^t\ii\lambda(\theta_{\ell})|\nn\B_{\ell}^{\frac{q}2}|^2\\
		&\quad\le 2a\int_0^t\ii g(\B_{\ell},\theta_{\ell})\Dv_{\ell}\cdot\B_{\ell}^{q}+ C.
	\end{align*}
	If we apply \eqref{BB}, \eqref{Ala}, $g_{\omega}\leq1$ and $|\B_{\ell}^{q}|\leq \max\{1,d^{\frac{1-q}2}\}|\B_{\ell}|^{q}$ (see \cite{BathoryOdh}), we deduce 
	\begin{equation}\label{estBB}
		\begin{split}
			&\ii\tr\B_{\ell}^{q}(t)+\int_0^t\ii|\B_{\ell}|^{2q}+\int_0^t\ii|\nn\B_{\ell}^{\frac{q}2}|^2\\
			&\quad \leq \ii(\tr\B_0^{\omega})^{q}+ C\left(\int_0^t\ii|\Dv_{\ell}||\B_{\ell}|^{q}+1\right).
		\end{split}
	\end{equation}
	Then, to estimate the~term with $\tr\B_0^{\omega}$, we use \eqref{entOO} and \eqref{init}. On the~last term on the~right hand side, we apply Young's inequality, leading to
	\begin{equation}\label{integ}			\norm{\B_{\ell}}^{q}_{L^{\infty}L^{q}}+\norm{\B_{\ell}}_{L^{2q}L^{2q}}^{2q}+\norm{\nn\B_{\ell}^{\frac{q}2}}_{L^2L^2}^2\leq C\left(1+ \norm{\Dv_{\ell}}_{L^{2}L^{2}}^{2}\right),
	\end{equation}
	where the~right hand side is finite due to \eqref{strov}, but we do not have a~uniform bound yet. To obtain it, we 
	combine the~estimate \eqref{integ} with the~temperature equation~\eqref{sys1t} and improve the~information about $\theta_{\ell}$ and $\Dv_{\ell}$.
	
	Let $\beta\in [0,\f12]$ be arbitrary. We define
	\begin{equation*}
		\tau_{\beta}\coloneqq-\theta_{\ell}^{-\beta}.
	\end{equation*}
	Using Lemma~\ref{Ldual} with $\psi(s)=-\max(s,\omega)^{-\beta}$ to rewrite the~time derivative, the~a~priori bound~\eqref{th} with Young's inequality, $\eqref{s12bc}_1$ and \eqref{Akap}, we obtain the~estimate
	\begin{equation}
		\begin{aligned}\label{vypoc}			&\scal{c_v\partial_t\theta_{\ell},\tau_{\beta}}+(\kappa(\theta_{\ell})\nn\theta_{\ell},\nn\tau_{\beta})_{\Q}
			+\omega(|\nn\theta_{\ell}|^r\nn\theta_{\ell},\nn\tau_{\beta})_{\Q}\\	&\quad\geq\f{-c_v}{1-\beta}\ii\theta_{\ell}^{1-\beta}(T)+\beta\int_{\Q}\theta_{\ell}^{-1-\beta}
			\kappa(\theta_{\ell})|\nn\theta_{\ell}|^2+\omega\beta\int_{\Q}\theta_{\ell}^{-1-\beta}|\nabla \theta_{\ell}|^{r+2}\\
			&\quad\geq C\beta\int_{\Q}\Big|\nn\theta_{\ell}^{\f{r+1-\beta}2}\Big|^2 + \omega\beta \int_{\Q}\theta_{\ell}^{-1-\beta}|\nabla \theta_{\ell}|^{r+2}-C.
		\end{aligned}
	\end{equation}
	The function $\tau_{\beta}$ evidently satisfies $\tau_{\beta}\in L^{r+2}(0,T;W^{1,r+2}(\Omega))\cap L^{\infty}(0,T;L^{\infty}(\Omega))$ (cf.~\eqref{tinv}), and is thus an~admissible test function in~\eqref{sys1t}. This way, noting that the~convective term $(c_v\ve_{\ell}\cdot\nn\theta_{\ell},\tau_{\beta})_Q$ disappears since $\di \ve_{\ell}=0$, and having the~estimate \eqref{vypoc} and using also $g_{\omega}\leq 1$, we deduce
	\begin{equation*}
		\beta\int_{\Q}\Big(\Big|\nn\theta^{\f{r+1-\beta}2}_{\ell}\Big|^2+\omega\theta_{\ell}^{-1-\beta}|\nabla \theta_{\ell}|^{r+2}\Big)+\int_{\Q}\theta^{-\beta}_{\ell}|\Dv_{\ell}|^2\leq C\Big(\int_{\Q}\theta^{1-\beta}_{\ell}|\B_{\ell}||\Dv_{\ell}|+1\Big).
	\end{equation*}
	Note that since $\beta \in [0,\f12]$ is arbitrary, we can reduce the~above inequality to
	\begin{equation*}
		\beta\int_{\Q}\Big(\Big|\nn\theta^{\f{r+1-\beta}2}_{\ell}\Big|^2+\omega\theta_{\ell}^{-1-\beta}|\nabla \theta_{\ell}|^{r+2}\Big)+\int_{\Q}|\Dv_{\ell}|^2\leq C\Big(\int_{\Q}(\theta_{\ell}+1)|\B_{\ell}||\Dv_{\ell}|+1\Big),
	\end{equation*}
	which is very much similar to \eqref{keys}, while the~estimate \eqref{integ} mimics \eqref{Best}. Hence, applying the~Young and  the~H\"{o}lder inequality, and using \eqref{integ}, we deduce similarly as in \eqref{keys2} that
	\begin{equation}
		\beta\int_{\Q}\Big(\Big|\nn\theta^{\f{r+1-\beta}2}_{\ell}\Big|^2+\omega\theta_{\ell}^{-1-\beta}|\nabla \theta_{\ell}|^{r+2}\Big)+\int_{\Q}|\Dv_{\ell}|^2\leq C\Big(1+\int_{\Q}|\theta_{\ell}|^{2q'}\Big). \label{BBest}
	\end{equation}
	Next, we continue as after \eqref{keys2}. We recall the~interpolation inequality
	\begin{equation}\label{interp1}
		\|\theta_{\ell}\|^{2q'}_{2q'}\le  C\|\theta_{\ell}\|_{1}^{2q'-\frac{d(r-\beta+1)(2q' -1)}{d(r-\beta)+2}} \|\theta_{\ell}^{\frac{r+1-\beta}{2}}\|_{1,2}^{\frac{2d(2q' -1)}{d(r-\beta)+2}}.
	\end{equation}
	Thus, using the~uniform bound \eqref{th}, the~estimate \eqref{BBest} and the~interpolation inequality~\eqref{interp1}, we deduce
	\begin{equation}\label{dost}
		\begin{aligned}
			\beta \int_0^T \|\theta_{\ell}^{\frac{r+1-\beta}{2}}\|_{1,2}^2 &\le C\beta \int_0^T\big( \|\nabla \theta_{\ell}^{\frac{r+1-\beta}{2}}\|^2_{2} + \|\theta_{\ell}\|_{1}^{r+1-\beta}\big)\\
			&\le C\Big(1+\int_{\Q}|\theta_{\ell}|^{2q'}\Big)\le C+C\int_0^T\|\theta_{\ell}^{\frac{r+1-\beta}{2}}\|_{1,2}^{\frac{2d(2q' -1)}{d(r-\beta)+2}}.
		\end{aligned}
	\end{equation}
	Finally, thanks to \eqref{A0}, we can find $\beta_0>0$ such that for all $\beta\in (0,\beta_0)$ we have
	\begin{equation*}
		\frac{2d(2q' -1)}{d(r-\beta)+2}<2.
	\end{equation*}
	Consequently, we can use the~Young inequality in \eqref{dost} and conclude that
	\begin{equation}\label{kaniec}
		\norm{\theta_{\ell}^{\frac{r+1-\beta}{2}}}_{L^2W^{1,2}} \le C(\beta)
	\end{equation}
	for all $\beta \in (0,\beta_0)$ (which can be however easily extended  via \eqref{EST}  to the~validity for all $\beta\in (0,1)$).
	Furthermore, from the~interpolation inequality 
	\begin{equation}\label{interp3}
		\|\theta_{\ell}\|^{r+1+\f2d-\beta}_{r+1+\f2d-\beta}\leq  C\|\theta_{\ell}\|_{1}^{\frac{2}{d}} \|\theta_{\ell}^{\frac{r+1-\beta}{2}}\|_{1,2}^{2},
	\end{equation}
	\eqref{th} and \eqref{kaniec}, we conclude
	\begin{equation}\label{interp4}
		\norm{\theta_{\ell}}_{L^{r+1+\f2d-\beta}L^{r+1+\f2d-\beta}}\le C(\beta).
	\end{equation}
	
	\subsection*{Summary of all uniform estimates}
	To summarize the~estimates proved up to this point, we recall \eqref{th}, \eqref{xiest}, \eqref{etaint} and  \eqref{EST} based on the~use of total energy and entropy estimates. Next, having \eqref{kaniec}, we can  choose $\beta\coloneqq\f{\beta_0}2$ and  go backward in the~computation in the~previous part and obtain further a~priori estimates. Namely, using \eqref{kaniec} and \eqref{interp1}, we see that the~right hand side of \eqref{BBest} is  uniformly  bounded.
	Then, using \eqref{BBest} in \eqref{integ} we deduce also the~a~priori bound for $\B_{\ell}$. 
	Thus,  we can conclude with the~following set of estimates
	\begin{align}
		\label{vrh}
		\norm{\ve_{\ell}}_{L^{\infty}L^2}+\norm{\Dv_{\ell}}_{L^2L^2}&\leq C,\\
		\label{integr2}
		\norm{\B_{\ell}}_{L^{\infty}L^{q}}+\norm{\B_{\ell}}_{L^{2q}L^{2q}}+\norm{\nn\B_{\ell}^{\frac{q}2}}_{L^2L^2}&\leq C,\\
		\label{tint}
		\norm{\theta_{\ell}}_{L^{\infty}L^1}+\norm{\nn\theta_{\ell}^{\frac{r+1-\eps}{2}}}_{L^2L^2}+\norm{\theta_{\ell}}_{L^{r+1+\f{2}d-\eps}L^{r+1+\f2d-\eps}}&\leq C(\eps) 
	\end{align}
	for all $\eps\in(0,1)$.  Next, in order to obtain estimates on $\nn\B_{\ell}$, we separate two cases. If $1<q<2$, we use \eqref{IJ3} and H\"older's inequality, \eqref{I8} and \eqref{integr2} to estimate
	\begin{equation*}
		\norm{\nn\B_{\ell}}_{\f{4q}{q+2}}\leq 2\norm{\B_{\ell}^{1-\f q2}}_{\f{4q}{2-q}}\norm{\nn\B_{\ell}^{\f q2}}_2\leq C.
	\end{equation*}
	On the~other hand, if $q\geq2$, the~optimal estimate on $\nn\B_{\ell}$ is obtained simply by testing \eqref{sys1B} with $\B_{\ell}$ (instead of $\B_{\ell}^{q-1}$). Indeed, using \eqref{Bsigg}, we eventually obtain \eqref{integ}, but with $q=2$. Combination of these two cases leads to
	\begin{equation}\label{naB}
		\norm{\nn\B_{\ell}}_{L^{m}L^{m}}\leq C.
	\end{equation}
	

	\subsection*{Uniform time derivatives estimates} We end this part by derivation of the~uniform estimates for the~time derivatives. To this end, we need to determine integrability of the~non-linear terms in~\eqref{sys1v}, \eqref{sys1B} and \eqref{ENTl}. It follows from an~interpolation inequality, Korn's inequality, \eqref{th} and \eqref{vrh} that
	\begin{equation}\label{vint}
		\norm{\ve_{\ell}}_{L^{2\f{d+2}{d}}L^{2\f{d+2}{d}}}\leq C\norm{\ve_{\ell}}_{L^{\infty}L^2}^{\f2{d+2}}\norm{\Dv_{\ell}}_{L^2L^2}^{\f d{d+2}}\leq C.
	\end{equation}
	Furthermore, the~H\"older inequality, \eqref{tint} and \eqref{integ} yield
	\begin{equation}\label{thB}
		\norm{\theta_{\ell}\B_{\ell}}_{L^{2}L^{2}}\leq C,
	\end{equation}
	Hence, as $d\geq2$, we read from \eqref{sys1v} that
	\begin{equation}\label{Dv}
		\norm{\partial_t\ve_{\ell}}_{L^{\f{d+2}{d}}W^{-1,\f{d+2}{d}}_{\n,\di}}\leq C.
	\end{equation}	
	
	Next, we focus on the~non-linear terms in~\eqref{sys1B}.
	Using H\"older's inequality and \eqref{integ}, \eqref{vint}, we observe that
	\begin{equation}\label{x1}
		\norm{\B_{\ell}\otimes\ve_{\ell}}_{L^{s_1}L^{s_1}}\leq C,
	\end{equation}
	with
	\begin{equation}\label{s1g}
		s_1\coloneqq\Big(\f1{2q}+\frac{d}{2(d+2)}\Big)^{-1}>\Big(\f1{2q}+\f12\Big)^{-1}= \f{2q}{q+1} .
	\end{equation}
	Moreover, making use of \eqref{integr2} and \eqref{APbou}, we obtain
	\begin{equation}\label{gBint}
		\norm{\PP(\theta_{\ell},\B_{\ell})}_{L^{\f{2q}{q+1}}L^{\f{2q}{q+1}}}\leq C.
	\end{equation}
	Furthermore, using \eqref{integr2}, \eqref{vrh} and H\"older's inequality, we also get
	\begin{equation}\label{x}
		\norm{(a\Dv_{\ell}+\Wv_{\ell})\B_{\ell}}_{L^{\f{2q}{q+1}}L^{\f{2q}{q+1}}}\leq C.
	\end{equation}
Thus, we read from \eqref{sys1B} using  \eqref{naB} (where note that $m>\f{2q}{q+1}$), \eqref{x1}, \eqref{s1g} and \eqref{gBint}, \eqref{x} that
\begin{equation}\label{DB}
	\norm{\partial_t\B_{\ell}}_{L^{\f{2q}{q+1}}W^{-1,\f{2q}{q+1}}}\leq C.
\end{equation}

Next, we examine the~non-linearities related to~\eqref{ENTl}.
Since $\xi_{\ell}$ is controlled by~\eqref{xiest}, the~problematic terms could be only on the~left hand side. To get an~appropriate uniform control over the~convective term, we estimate
\begin{equation*}
	\eta_{\ell}\leq\eta_{\ell}+ f(\B_{\ell})=c_v\ln\theta_{\ell}\leq c_v(\theta_{\ell}-1).
\end{equation*}
This, together with \eqref{th} and \eqref{etaint}, yields
\begin{equation}\label{cln}
	\norm{\ln\theta_{\ell}}_{L^{\infty}L^1}\leq C.
\end{equation}
Then, since \eqref{EST} and \eqref{Akap} give
\begin{equation}\label{nnlnt}
	\norm{\nn\ln\theta_{\ell}}_{L^2L^2}\leq C,
\end{equation}
we can use Sobolev's inequality, Poincar\'e's inequality and an~interpolation to obtain
\begin{equation}\label{lnint}
	\norm{\ln\theta_{\ell}}_{L^{2+\f2d}L^{2+\f2d}}\leq C\norm{\ln\theta_{\ell}}_{L^{\infty}L^1}^{\f1{d+1}}\norm{\ln\theta_{\ell}}_{L^2W^{1,2}}^{\f d{d+1}}\leq C.
\end{equation}
Now we observe that a~similar reasoning applies also for the~quantity $\ln\det\B_{\ell}$. Indeed, using \eqref{etaint}, \eqref{cln}, \eqref{integ} and \eqref{etadef} in the~form
\begin{equation*}
	\ln\det\B_{\ell}=\f1{\mu}(\eta_{\ell}-c_v\ln\theta_{\ell})+\tr\B_{\ell}-d,
\end{equation*}
it is clear that
\begin{equation}\label{fb}
	\norm{\ln\det\B_{\ell}}_{L^{\infty}L^1}\leq C.
\end{equation}
Further, the~estimate of its derivative follows from a~version of Jacobi's formula (see Lemma~\ref{PDalg} below) and \eqref{EST} as
\begin{equation}\label{nntr}
	\norm{\nn\ln\det\B_{\ell}}_{L^2L^2}=\norm{\tr(\B^{-\f12}_{\ell}\nn\B_{\ell}\B^{-\f12}_{\ell})}_{L^2L^2}\leq C.
\end{equation}
Hence, using again the~Sobolev, the~Poincar\'e and interpolation inequalities, we get
\begin{equation}\label{ld}
	\norm{\ln\det\B_{\ell}}_{L^{2+\f2d}L^{2+\f2d}}\leq C.
\end{equation}
From \eqref{lnint}, \eqref{ld}, \eqref{integ} and \eqref{etadef}, we deduce
\begin{equation}\label{etag}
	\norm{\eta_{\ell}}_{L^{s_2}L^{s_2}}\leq C,\quad\text{where}\quad s_2\coloneqq\min\{2+\tfrac2d,2q\}>2,
\end{equation}
and thus
\begin{equation}\label{etcon}
	\norm{\ve_{\ell}\eta_{\ell}}_{L^{s_3}L^{s_3}}\leq C,\quad\text{where}\quad s_3\coloneqq\Big(\f d{2(d+2)}+\f1{s_2}\Big)^{-1}>1.
\end{equation}
We remark that, since
\begin{equation*}
	\nn\eta_{\ell}=c_v\nn\ln\theta_{\ell}-\mu(\tr\nn\B_{\ell}-\tr(\B_{\ell}^{-\f12}\nn\B_{\ell}\B^{-\f12}_{\ell})),
\end{equation*}
we also have, using \eqref{nntr}, \eqref{nnlnt}, \eqref{naB}, \eqref{EST} and Poincar\'e's inequality that
\begin{equation}\label{nneta}
	\norm{\eta_{\ell}}_{L^{m}W^{1,m}}\leq C.
\end{equation}
Looking at \eqref{ENTl}, we still need to verify that the~flux terms are controlled. For the~term $\kappa(\theta_{\ell})\nn\ln\theta_{\ell}$, we  first use \eqref{Akap} and \eqref{tint} to estimate
\begin{equation*}
	\norm{\sqrt{\kappa(\theta_{\ell})}}_{L^{\f{2d(r+1)+4}{dr}}(\Q;\R)}\leq C\norm{1+\theta}_{L^{r+1+\f2d}(\Q;\R)}^{\f{r}2}\leq C
\end{equation*}
and then, by H\"older's inequality and \eqref{EST}, we get
\begin{equation}\begin{aligned}\label{enttes}
		&\norm{\kappa(\theta_{\ell})\nn\ln\theta_{\ell}}_{L^{\f{2d(r+1)+4}{2d(r+1)+2-d}}(\Q;\R^d)}\\
		&\qquad\leq\norm{\sqrt{\kappa(\theta_{\ell})}}_{L^{\f{2d(r+1)+4}{dr}}(\Q;\R)}\norm{\sqrt{\kappa(\theta_{\ell})}\nn\ln\theta_{\ell}}_{L^2(\Q;\R^d)}\leq C.
\end{aligned}\end{equation}
Further, let us derive an~estimate on $\omega|\nn\theta_{\ell}|^r\nn\ln\theta_{\ell}$, from which it follows that this term vanishes as $\omega\to0+$.  The~H\"older inequality, \eqref{EST} and \eqref{tint} yield 
\begin{equation}\label{tdrt}
	\begin{split}
		\omega\int_{\Q}|\nn\theta_{\ell}|^r|\nn\ln\theta_{\ell}|&=  \omega^{\frac{1}{r+2}}\int_{\Q}\big(\omega\theta_{\ell}^{-2}|\nabla \theta_{\ell}|^{ r+2}\big)^{\frac{r+1}{r+2}}\theta_{\ell}^{2\frac{r+1}{r+2}-1}\\
		&\le C\omega^{\frac{1}{r+2}}\Big(\int_{\Q}\theta_{\ell}^{r}\Big)^{\frac{1}{r+2}}\le C\omega^{\frac{1}{r+2}}.
	\end{split}
\end{equation}
From this and from \eqref{enttes}, \eqref{nnlnt}, \eqref{nntr}, \eqref{etcon}, \eqref{EST}, \eqref{ENTl}, we see, using the~definition of a~weak time derivative, that
\begin{equation}\label{dteta}
	\norm{\partial_t\eta_{\ell}}_{L^1W^{-M,2}}\leq C,
\end{equation}
where $M$ is so large that $W^{M,2}(\Omega;\R)\embl W^{1,\infty}(\Omega;\R)$.

Finally, we focus on terms appearing in the~temperature equation~\eqref{sys1t2}. First, we note that it is a~consequence of assumption \eqref{A0}, a~priori estimates \eqref{vrh}--\eqref{tint} and the~H\"{o}lder inequality, that
\begin{equation}\label{tukos}
	\int_{\Q}|\theta_{\ell}\ve_{\ell}|+\int_Q|\Sb_{\ell}^{\omega}\cdot\Dv_{\ell}|\leq C.
\end{equation}
In the~terms involving temperature gradient, we use \eqref{Akap}, \eqref{EST}, \eqref{tint} and the~inequality $\max\{2,r+1+\eps\}< r+1+\f2d-\eps$ for $\eps$ small (recall \eqref{A0}) to estimate
\begin{equation}\begin{aligned}\label{QQ1}
		\int_{\Q}|\kappa(\theta_{\ell})\nn\theta_{\ell}|&\leq C\int_{\Q}(\theta_{\ell}|\nn\ln\theta_{\ell}|+\theta_{\ell}^{\f{r+1+\eps}2}|\nn\theta^{\f{r+1-\eps}2}|)\\
		&\leq C\int_{\Q}(\theta_{\ell}^2+\theta_{\ell}^{r+1+\eps})\leq C.
\end{aligned}\end{equation}
Proceeding similarly as in \eqref{tdrt}, but using now \eqref{BBest} instead of \eqref{EST}, we also find
\begin{equation}\label{QQ2}
	\begin{split}
		\int_{\Q}\omega |\nabla \theta_{\ell}|^{r+1}&= \omega^{\frac{1}{r+2}}\int_{\Q}\big(\omega\theta_{\ell}^{-1-\beta}|\nabla \theta_{\ell}|^{r+2} \big)^{\frac{r+1}{r+2}}  \theta_{\ell}^{\frac{(\beta+1)(r+1)}{r+2}}\\
		&\le C \omega^{\frac{1}{r+2}}\Big(\int_{\Q} \theta_{\ell}^{(\beta+1)(r+1)}\Big)^{\frac{1}{r+2}}\le C \omega^{\frac{1}{r+2}},
	\end{split}
\end{equation}
where $\beta>0$ is chosen so small that $(\beta+1)(r+1)<r+1+\f2d$.
Using the~above estimates  in \eqref{sys1t}, we  deduce
\begin{equation}\label{dteplota3}
	\norm{\partial_t\theta_{\ell}}_{L^1W^{-M,2}}\leq C,
\end{equation}
for sufficiently large $M$. Very similarly,  choosing $\theta_{\ell}^{-\f12}\phi$ in \eqref{sys1t} and  repeating the~method for estimating $\partial_t \eta_{\ell}$,  we can find that
\begin{equation}
	\label{dteplota}
	\norm{\partial_t\theta_{\ell}^{\frac12}}_{L^1W^{-M,2}}\leq C.
\end{equation}
Finally, returning to \eqref{2test} with \eqref{BIde} and using the~uniform estimates proved so far, it is easy to see that also
\begin{equation}\label{dfB}
	\norm{\partial_t f(\B_{\ell})}_{L^1W^{-M,2}}\leq C.
\end{equation}
The last two properties will be useful in the~initial condition identification. 

\subsection*{Limits \texorpdfstring{$\omega\to0$}{w}, \texorpdfstring{$\ell\to\infty$}{l}.}

Let us note that the~estimates above are independent not only of $\ell$, but also of $\omega$. Hence, we can set $\omega\coloneqq\ell^{-1}$ and hereby, it remains to take the~limit $\ell\to\infty$ only.

By collecting the~estimates \eqref{th}, \eqref{tint}--\eqref{integr2}, \eqref{naB}, \eqref{DB}, \eqref{Dv},  \eqref{etag}, \eqref{nneta}, \eqref{dteta}, \eqref{QQ1}, \eqref{dteplota3}, \eqref{dteplota} and using the~Aubin--Lions lemma and Vitali's convergence theorem, we get the~following results:
{\allowdisplaybreaks\begin{align}
		\ve_{\ell}&\wc \ve&&\text{weakly in }L^{2}(0,T;W^{1,2}_{\n,\di}),\label{ccvw}\\
		\ve_{\ell}&\to\ve&&\text{strongly in }L^{2\f{d+2}d-\eps}(\Q;\R^d)\text{ and a.e.\ in }\Q,\label{ccvs}\\
		\partial_t\ve_{\ell}&\wc\partial_t\ve&&\text{weakly in }L^{\f{d+2}{d}}(0,T;W^{-1,\f{d+2}{d}}_{\n,\di}),\label{ccvdt}\\
		\B_{\ell}&\wc\B&&\text{weakly in }L^{m}(0,T;W^{1,m}(\Omega;\Sym)),\label{ccBw}\\
		\B_{\ell}&\to\B&&\text{strongly in }L^{2q-\eps}(\Q;\Sym)\text{ and a.e.\ in }\Q,\label{ccBs}\\
		\partial_t\B_{\ell}&\wc\partial_t\B&&\text{weakly in }L^{\f{2q}{q+1}}(0,T;W^{-1,\f{2q}{q+1}}(\Omega;\Sym)),\label{ccBdt}\\
		\eta_{\ell}&\wc\eta&&\text{weakly in }L^{m}(0,T;W^{1,m}(\Omega;\R)),\label{ccetw}\\
		\eta_{\ell}&\to\eta&&\text{strongly in }L^{s_2-\eps}(\Q;\R)\text{ and a.e.\ in }\Q,\label{ccets}\\
		\eta_{\ell}&\wcs\eta&&\text{weakly$^*$ in }BV(0,T; W^{-M,2}(\Omega;\mathbb{R})),\label{UUccets}\\
		\theta^{\frac{r+1-\eps}{2}}_{\ell}&\wc\theta^{\frac{r+1-\eps}{2}}&&\text{weakly in }L^{2}(0,T;W^{1,2}(\Omega;\R)),\label{ccetwop}\\
		\theta_{\ell}&\to\theta&&\text{strongly in }L^{r+1+\f2d-\eps}(\Q;\R),\label{cctw}\\
		\theta_{\ell}^{\frac12}&\wcs\theta^{\frac12}&&\text{weakly$^*$ in }BV(0,T; W^{-M,2}(\Omega;\mathbb{R})),\label{cctw2}\\ 
		\theta_{\ell}&\wcs\theta&&\text{weakly$^*$ in }BV(0,T; W^{-M,2}(\Omega;\mathbb{R}))\label{cctw22}
\end{align}}
for any $\eps\in(0,1)$. Using these properties, we shall now explain how to take the~limit in equations \eqref{sys1v}, \eqref{sys1B}, \eqref{ENTl}, \eqref{Ener0} and  \eqref{sys1t}. 

First, we focus on taking the~limit in the~function $g_{\f1{\ell}}$. From \eqref{pd}, \eqref{pt} and \eqref{ccvw}, \eqref{ccBw} (or \eqref{ccvs}, \eqref{ccBs}), we obtain
\begin{equation}\label{neup}
	\B\x\cdot\x\geq0 \quad\text{for all }\x\in\R^d\quad\text{and}\quad\theta\geq0\quad\text{a.e.\ in }\Q,
\end{equation}
however, we need these properties with strict inequalities. To this end, we use the~Fatou lemma, \eqref{ccBs} and \eqref{fb} to get
\begin{equation*}
	\ii|\ln\det\B|\leq\liminf_{\ell\to\infty}\ii|\ln\det\B_{\ell}|\leq C\quad\text{a.e.\ in }(0,T).
\end{equation*}
Thus, by taking the~essential supremum over $(0,T)$, we obtain
\begin{equation}\label{pdd}
	\norm{\ln\det\B}_{L^{\infty}L^1}<\infty,
\end{equation}
which, together with \eqref{neup} implies
\begin{equation}\label{pdB}
	\B\x\cdot\x>0 \quad\text{for all }\x\in\R^d\quad\text{a.e.\ in }\Q.
\end{equation}
An analogous argument, using now \eqref{cctw} and \eqref{cln}, shows that
\begin{equation}\label{tp}
	\theta>0\quad\text{a.e.\ in }\Q.
\end{equation}
With this in hand, note that the~property \eqref{s4} follows from \eqref{etadef} and the~pointwise a.e.~convergence of $\eta_{\ell}$, $\theta_{\ell}$ and $\B_{\ell}$. Also, 
from \eqref{pdB}, \eqref{tp} and the~pointwise convergence we deduce that, at almost every point $(t,x)\in \Q$, we can find $M_{t,x}\in\N$ such that for all $\ell>M_{t,x}$ we have
\begin{equation*}
\Lambda(\B_{\ell}(t,x))>\f12\Lambda(\B(t,x))>\f1{\ell}\quad\text{and}\quad\theta_{\ell}(t,x)>\f12\theta(t,x)>\f1{\ell}.
\end{equation*}
Then, looking at the~definition of $g_{\la}$, we see that at almost every point $(t,x)\in \Q$ and for $\ell> M_{t,x}$, the~positive parts $\max\{0,\cdot\}$ can be removed and thus, it is clear that $g_{\f1{\ell}}(\B_{\ell},\theta_{\ell})$ converges pointwise a.e.\ in $\Q$ to $1$. Hence, the~Vitali theorem and $0\leq g_{\f1{\ell}}<1$, imply
\begin{align}\label{ccg}
g_{\f1{\ell}}(\B_{\ell},\theta_{\ell})&\to 1\quad\text{strongly in}\quad L^{p}(\Q;\R)\quad\text{for any}\quad1\leq p<\infty.
\end{align}
Therefore, regarding the~first two equations \eqref{sys1v} and \eqref{sys1B}, we can take the~limit in the~same way as we did in the~limit $n\to\infty$. Indeed, the~integrability of the~resulting non-linear limits was already verified when estimating $\partial_t\ve_{\ell}$ and $\partial_t\B_{\ell}$ (\eqref{vint}--\eqref{x}). This way, taking \eqref{ccg} into account, using the~density of $\spa\{\we_i\}_{i=1}^{\infty}$ in $W^{1,\f{d}2+1}_{\n,\di}$ and extending the~functional $\partial_t\B$ to the~space stated in \eqref{DRdtB} using \eqref{DB}, we obtain precisely \eqref{Qv} and \eqref{QB}.

Next, we show how to take the~limit in~\eqref{ENTl}.  Regarding the~initial condition, 
using \eqref{entO} and \eqref{entOO}, we estimate
\begin{equation*}
|\eta_0^{\omega}|\leq c_v|\ln\theta_0^{\omega}|+\mu(|\tr\B_0^{\omega}|+d+|\ln\det\B_0^{\omega}|)\leq C(|\ln\theta_0|+|\B_0|+|\ln\det\B_0|+1),
\end{equation*}
where the~right hand side is integrable by assumptions \eqref{init} and \eqref{initln}. Moreover, the~function $\eta_0^{1/\ell}$ converges point-wise a.e.\ in $\Omega$ due to~\eqref{B0con} and \eqref{t0con}.  Thus, by the~dominated convergence theorem, the~function $\eta_0^{\f1{\ell}}$ converges to $\eta_0$ in $L^1(\Omega;\R)$.  In order to take the~limit in the~convective term, we use \eqref{ccvs}, \eqref{ccets} and \eqref{etcon}. Next,  the~properties  \eqref{lnint}, \eqref{ld}, \eqref{cctw}, \eqref{ccBs} and \eqref{nntr}, \eqref{nnlnt} imply
\begin{align}
\ln\theta_{\ell}&\wc\ln\theta&&\text{weakly in }L^2(0,T;W^{1,2}(\Omega;\R)),\label{cclnt}\\
\ln\det\B_{\ell}&\wc\ln\det\B&&\text{weakly in }L^2(0,T;W^{1,2}(\Omega;\R))\label{cclnB}.
\end{align} 
Further, we use \eqref{Acont}, \eqref{Akap}, \eqref{tint} and Vitali's theorem to find that
\begin{align}
\sqrt{\kappa(\theta_{\ell})}&\wc\sqrt{\kappa(\theta)}\quad\text{strongly in } L^{\f{2(r+1)}{r}}(\Q;\R).\label{kaps}
\end{align}
As a~consequence of this, \eqref{cclnt} and \eqref{EST}, we get
\begin{equation}\label{sqrt}
\sqrt{\kappa(\theta_{\ell})}\nn\ln\theta_{\ell}\wc\sqrt{\kappa(\theta)}\nn\ln\theta\quad\text{weakly in } L^2(\Q;\R^d).
\end{equation}
Therefore, using again \eqref{kaps}, we obtain
\begin{equation*}
\kappa(\theta_{\ell})\nn\ln\theta_{\ell}\wc\kappa(\theta)\nn\ln\theta\quad\text{weakly in }L^1(\Q;\R^d).
\end{equation*}
Next, in the~term $\mu\la(\theta_{\ell})\nn\tr\B_{\ell}$, we use \eqref{Acont}, \eqref{Ala}, \eqref{cctw}, Vitali's theorem and \eqref{ccBw}. Analogously, we take the~limit in the~term $\mu\la(\theta_{\ell})\nn\ln\det\B_{\ell}$, only we use \eqref{cclnB} instead of \eqref{ccBw}. The~term containing $\omega|\nn\theta_{\ell}|^r\nn\ln\theta_{\ell}$ tends to zero by \eqref{tdrt}.

Now we take the~limit in the~terms on the~right hand side of \eqref{ENTl}, i.e., the~function $\xi_{\ell}$ defined in \eqref{xil}. Note that we just need to pass to the~limit with possible inequality sign (selecting non-negative test functions $\phi$, $\varphi$). To take the~limit in the~term $\PP(\theta_{\ell},\B_{\ell})\cdot(\I-\B_{\ell}^{-1})\phi \varphi\geq0$, we use \eqref{cctw}, \eqref{ccBs} and apply Fatou's lemma. Next, in term $\kappa(\theta_{\ell})|\nn\ln\theta_{\ell}|^2\phi \varphi$, we  use \eqref{sqrt} and the~weak lower semi-continuity.  Moreover, the~auxiliary term $\omega|\nn\theta_{\ell}|^r|\nn\ln\theta_{\ell}|^2\phi\varphi$ is simply estimated from below by zero. Thus, in order to let $\ell\to\infty$ in~\eqref{ENTl}, it remains to show that
\begin{equation*}
\begin{split} &\liminf_{\ell\to\infty}\int_{\Q}\left(\f{2\nu(\theta_{\ell})}{\theta_{\ell}}|\Dv_{\ell}|^2+\la(\theta_{\ell})\left|\B_{\ell}^{-\f12}\nn\B_{\ell}\B_{\ell}^{-\f12}\right|^2\right)\phi \varphi\\
	&\quad\geq\int_{\Q} \left(\f{2\nu(\theta)}{\theta}|\Dv|^2+\la(\theta)\left|\B^{-\f12}\nn\B\B^{-\f12}\right|^2\right)\phi\varphi.
\end{split}
\end{equation*}
The above inequality is however consequence of the~weak lower semicontinuity and the~following claim
\begin{equation}\label{claim}
\begin{aligned}
	\sqrt{\la(\theta_{\ell})}\B^{-\f12}_{\ell}\nn\B_{\ell}\B_{\ell}^{-\f12}&\wc \sqrt{\la(\theta)}\B^{-\f12}\nn\B\B^{-\f12}&&\text{weakly in }L^2(\Q;\R^d\times\Sym),\\
	\sqrt{\f{2\nu(\theta_{\ell})}{\theta_{\ell}}}\Dv_{\ell}&\wc \sqrt{\f{2\nu(\theta)}{\theta}}\Dv&&\text{weakly in }L^2(\Q;\Sym),
\end{aligned}
\end{equation}
which we need to obtain. To do so, we start with \eqref{EST} and therefore we have (for a~proper subsequence) that
\begin{align}\label{ww}
\sqrt{\la(\theta_{\ell})}\B^{-\f12}_{\ell}\nn\B_{\ell}\B_{\ell}^{-\f12}&\wc G &&\text{weakly in }L^2(\Q;\R^d\times\Sym),\\
\label{K1}
\sqrt{\f{2\nu(\theta_{\ell})}{\theta_{\ell}}}\Dv_{\ell}&\wc K &&\text{weakly in }L^2(\Q;\Sym).
\end{align}
Thus, it remains to show
\begin{equation}\label{imop}
\begin{aligned}
	\sqrt{\la(\theta)}\B^{-\f12}\nn\B\B^{-\f12}&=G, \qquad \sqrt{\f{2\nu(\theta)}{\theta}}\Dv=K.
\end{aligned}
\end{equation}
First, we use the~Egorov theorem and then it follows from  \eqref{fb}, \eqref{ccBs}, \eqref{pdd}, \eqref{tp} and \eqref{cctw} that for any $\varepsilon>0$ there exists measurable $Q_{\varepsilon}\subset \Q$ fulfilling $|\Q\setminus Q_{\varepsilon}|\le \varepsilon$ such that
\begin{equation*}
\begin{aligned}
	\B^{-\f12}_{\ell}\rightrightarrows \B^{-\f12}, \qquad \sqrt{\la(\theta_{\ell})}\rightrightarrows \sqrt{\la(\theta)}, \qquad \sqrt{\f{2\nu(\theta_{\ell})}{\theta_{\ell}}} \rightrightarrows \sqrt{\f{2\nu(\theta)}{\theta}}
\end{aligned}
\end{equation*}
uniformly in $Q_{\varepsilon}$. Combining the~above uniform convergence results with the~weak convergence results \eqref{ccvw} and  \eqref{ccBw}, we deduce
\begin{equation}\label{claime}
\begin{aligned}
	\sqrt{\la(\theta_{\ell})}\B^{-\f12}_{\ell}\nn\B_{\ell}\B_{\ell}^{-\f12}&\wc \sqrt{\la(\theta)}\B^{-\f12}\nn\B\B^{-\f12}&&\text{weakly in }L^1(Q_{\varepsilon};\R^d\times\Sym),\\
	\sqrt{\f{2\nu(\theta_{\ell})}{\theta_{\ell}}}\Dv_{\ell}&\wc \sqrt{\f{2\nu(\theta)}{\theta}}\Dv&&\text{weakly in }L^1(Q_{\varepsilon};\Sym).
\end{aligned}
\end{equation}
Thus, the~uniqueness of a~weak limit implies that \eqref{imop} is satisfied a.e.~in $Q_{\varepsilon}$. Since $\varepsilon>0$ was arbitrary, we can let $\varepsilon \to 0+$ and conclude that \eqref{imop} holds true a.e.~in $\Q$. Consequently, we deduced \eqref{claim} and therefore we proved \eqref{Qent}.			

In addition, in very similar manner we can let $\ell \to \infty$ in \eqref{sys1t} to obtain \eqref{Qthe}. Note that contrary to the~entropy inequality, we use here in addition the~estimates \eqref{tukos}, \eqref{QQ1} and \eqref{QQ2}. Otherwise, the~proof is almost identical.	

To take the~limit in~\eqref{Ener0}, we first note, using \eqref{vv} and \eqref{t0}, that it implies
\begin{equation}\label{eEeq}
-(E_{\ell},\partial_t\phi)_{\Q}+\alpha(|\ve_{\ell}|^2,\phi)_{\SigT}=(\tfrac12|P_{\ell}\ve_0|^2+c_v\theta_0^{\f1{\ell}},\phi(0))+(\fe,\ve_{\ell}\phi)_{\Q}
\end{equation}
for all $\phi\in\mathcal{C}^{1}([0,T];\R)$ with $\phi(T)=0$. Then, recalling \eqref{ccvs} and \eqref{cctw}, we see that $E_{\ell}=\tfrac12|\ve_{\ell}|^2+c_v\theta_{\ell}$ converges strongly to $E$ and thus, using also properties of $P_{\ell}$ and \eqref{t0con}, we can take the~limit in \eqref{eEeq} to conclude
\begin{equation}\label{Ener}
-(E,\partial_t\phi)_{\Q}+\alpha(|\ve|^2,\phi)_{\SigT}=(E_0,\phi(0))+(\fe,\ve\phi)_{\Q},
\end{equation}
where we set $E_0\coloneqq\frac12|\ve_0|^2+c_v\theta_0$. In particular, by choosing an appropriate sequence of test functions $\phi$, we obtain \eqref{QE}.

\subsection*{Attainment of initial conditions}

To finish the~existence~proof, it remains to identify the~initial conditions and show that they are attained strongly. Let us start by an~observation that $\ve$ and $\B$ are weakly continuous in time. Indeed, first of all, we recall that
\begin{equation}\label{kkl}	
\begin{aligned}
	&\ve\in L^{\infty}(0,T;L^2(\Omega;\R^d)),\quad\partial_t\ve\in L^{\f{d+2}{d}}(0,T;W^{-1,\f{d+2}{d}}_{\n,\di}(\Omega;\R^d)),\\
	&\B\in L^{\infty}(0,T;L^{q}(\Omega;\PD)),\quad\partial_t\B\in L^{\f{2q}{q+1}}(0,T;W^{-1,\f{2q}{q+1}}(\Omega;\Sym)),
\end{aligned}
\end{equation}
cf.\ \eqref{integr2} and \eqref{DB}. From this we obtain, by a~standard argument known from the~theory of Navier--Stokes equations (see e.g.\ \cite[Sect.~3.8.]{Malek2005}), that
\begin{equation}
\ve\in \CO_w([0,T];L^2(\Omega;\R^d))\quad\text{and}\quad
\B\in \CO_w([0,T];L^{q}(\Omega;\R^d)).\label{bcw}
\end{equation}
Then, to identify the~corresponding weak limits, we can use an~analogous idea as in the~part where the~limit $n\to\infty$ was taken together with \eqref{B0con}. This way, we obtain
\begin{equation}
\lim_{t\to0+}(\ve(t),\we)=(\ve_0,\we)\quad\text{for all }\we\in L^2(\Omega;\R^d)\label{wccv}
\end{equation}
and
\begin{equation}
\lim_{t\to0+}(\B(t),\Wb)=(\B_0,\Wb)\quad\text{for all }\Wb\in L^{q'}(\Omega;\Sym).\label{wccB}
\end{equation}

Next, we use a~similar procedure for entropy and temperature. Recalling \eqref{UUccets} and \eqref{cctw2}, we can define for all $t^0\in [0,T]$ the~values $\sqrt{\theta}(t^0_{\pm})$, $\eta(t^0_{\pm})$ such that
\begin{equation}\label{sssl}
\lim_{t\to t^0{\pm}}\big(\big\|\sqrt{\theta(t)}-\sqrt{\theta(t^0_{\pm})}\big\|_{W^{-M,2}(\Omega;\R)}+\norm{\eta(t)-\eta(t^0_{\pm})}_{W^{-M,2}(\Omega;\R)}\big)=0.
\end{equation}
Therefore, using the~density of $L^w(\Omega;\R)$ in $W^{-M,2}(\Omega;\R)$, which is valid for all $w\in (1,\infty)$ and $M$ sufficiently large, and recalling the~fact that $\theta \in L^{\infty}(0,T; L^1(\Omega;\R))$, we can deduce that there is non-negative $\thlim\in L^{1}(\Omega; \R)$ fulfilling
\begin{equation}
\lim_{t\to0+}(\sqrt{\theta(t)},\zeta)=(\sqrt{\thlim},\zeta)\quad\text{for all }\zeta\in L^{2}(\Omega;\R).\label{wcctep}
\end{equation}
Our aim is to show that $\thlim=\theta_0$ and that it is attained strongly. 	

Unlike in the~theory of Navier--Stokes(--Fourier) systems, we can not draw information about $\limsup_{t\to0+}\norm{\ve(t)}_2^2$ from the~(kinetic) energy estimate directly because of the~presence of $\theta\B$ in~\eqref{Qv}. Instead, we need first to combine the~total energy and entropy balances to obtain the~initial condition for $\theta$. In \eqref{Ener} we choose a~sequence of test functions $\phi$ approximating the~function $\chi_{[0,t)}$, $t\in(0,T)$. This way, after taking the~appropriate limit, we arrive at
\begin{equation}\label{Ener3}
\ii E(t)+\alpha\int_0^t \ii |\ve|^2=\ii E_0+\int_0^t\ii\fe\cdot\ve\quad\text{for a.a.\ }t\in(0,T).
\end{equation}
Next, we strengthen the~above relation to be valid for all $t\in (0,T)$ with possibly inequality sign. Due to the~weak continuity of $\ve$, see~\eqref{bcw}, we see that $\ve(\tau)$ is uniquely defined for all $\tau\in (0,T)$ and
\begin{equation}
\lim_{t\to \tau}(\ve(t),\we)=(\ve(\tau),\we)\quad\text{for all }\we\in L^2(\Omega;\R^d).\label{wccvMB}
\end{equation}
The same is however not true for $\theta$ since it is not weakly continuous w.r.t.~$t\in (0,T)$. Nevertheless, we can define one-side values for every $t\in (0,T)$ with the~help of~\eqref{cctw2}, i.e., using the~similar arguments as in \eqref{wcctep}, we have the~one-sided uniquely defined weak limit
\begin{equation}
\lim_{t\to \tau\pm}(\sqrt{\theta(t)},\zeta)=(\sqrt{\theta(\tau_{\pm})},\zeta)\quad\text{for all }\zeta\in L^{2}(\Omega;\R).\label{wcctepMB}
\end{equation}

Next, we use above weak convergence results in \eqref{Ener3}. Integrating it with respect to $t\in (\tau, \tau+\delta)$, we get
\begin{equation*}
\int_{\tau}^{\tau+\delta}\ii E(t)\dd{t}=\int_{\tau}^{\tau+\delta} \int_0^t\left(\ii\fe\cdot\ve-\alpha\ii |\ve|^2 \right) \dd{t} +\delta\ii E_0.
\end{equation*}	
Thus, dividing by $\delta$, letting first $\delta\to 0+$ and then $\tau\to 0+$, we get
\begin{equation}\label{Ener3MB}
\lim_{\tau\to 0+} \lim_{\delta\to 0+} \delta^{-1}	\int_{\tau}^{\tau+\delta}\ii E(t)\,\textrm{d} t=\ii E_0=	\ii(\tfrac12|\ve_0|^2+c_v\theta_0)
\end{equation}	
and in a~very similar manner, we obtain
\begin{equation}\label{Ener3MB2}
\lim_{\tau\to 0+} \lim_{\delta\to 0+} \delta^{-1}	\int_{\tau-\delta}^{\tau}\ii E(t)\,\textrm{d} t=\ii E_0=	\ii(\tfrac12|\ve_0|^2+c_v\theta_0).
\end{equation}
We focus on the~term on the~left hand side. Using the~convexity, we have
\begin{equation*}
\begin{split}
	\delta^{-1}	&\int_{\tau}^{\tau+\delta}\ii E(t)\,\textrm{d} t=\delta^{-1}	\int_{\tau}^{\tau+\delta}\ii(\tfrac12|\ve (t)|^2+c_v\theta(t))\,\textrm{d} t\\
	&\ge\delta^{-1}	\int_{\tau}^{\tau+\delta}\ii(\tfrac12|\ve (\tau)|^2+c_v\theta(\tau_+)) +\ve(\tau)\cdot (\ve(t)-\ve(\tau)) \\
	&\qquad + 2c_v\sqrt{\theta(\tau_+)}\big(\sqrt{\theta(t)}-\sqrt{\theta(\tau_+)}\big) \,\textrm{d} t \\
	&\ge \ii(\tfrac12|\ve(\tau)|^2+c_v\theta(\tau_+)) \\
	&\quad -\sup_{t\in (\tau, \tau+\delta)}\left|\ii\ve(\tau)\cdot (\ve(t)-\ve(\tau))+ 2c_v\sqrt{\theta(\tau_+)}\big(\sqrt{\theta(t)}-\sqrt{\theta(\tau_+)}\big)\right|.
\end{split}
\end{equation*}
Then, it follows from the~weak continuity results \eqref{wccvMB} and \eqref{wcctepMB} and from the~above inequality that
\begin{equation*}
\begin{split}
	\lim_{\delta \to 0+}\delta^{-1}	&\int_{\tau}^{\tau+\delta}\ii E(t)\,\textrm{d} t\ge  \ii(\tfrac12|\ve (\tau)|^2+c_v\theta(\tau_+)).
\end{split}
\end{equation*}
Repeating the~same procedure we also get
\begin{equation*}
\begin{split}
	\lim_{\delta \to 0+}\delta^{-1}	&\int_{\tau-\delta}^{\tau}\ii E(t)\,\textrm{d} t\ge  \ii(\tfrac12|\ve (\tau)|^2+c_v\theta(\tau_{-})).
\end{split}
\end{equation*}
Consequently, combining it with \eqref{Ener3MB} and \eqref{Ener3MB2}, and also with \eqref{init}, \eqref{wccv}, \eqref{wcctep} and weak lower semi-continuity, we get
\begin{align*}
\ii(\tfrac12|\ve_0|^2+c_v\theta_0)&\ge \limsup_{t\to0+}\ii(\tfrac12|\ve(t)|^2+c_v\theta(t_{\pm}))\\
&\geq\liminf_{t\to0+}\ii\tfrac12|\ve(t)|^2+\limsup_{t\to0+}\ii c_v\theta(t_{\pm})\\
&\geq\ii\tfrac12|\ve_0|^2+\limsup_{t\to0+}\ii c_v\theta(t_{\pm}),
\end{align*}
hence due to \eqref{wcctep} and the~convexity of the~second power, we have
\begin{equation}\label{eslims}
\ii\thlim \le 	\limsup_{t\to0+}\ii\theta(t_{\pm})\leq\ii\theta_0.
\end{equation}

In what follows we will not distinguish ``$\pm$" in $\theta(t_{\pm})$ and $\eta(t_{\pm})$ and simply write $\theta(t)$ and $\eta(t)$. To obtain also the~corresponding lower estimate, we need to extract the~available information from the~entropy inequality~\eqref{Qent}. To this end, we localize~\eqref{Qent} in time, using a~sequence of non-negative functions approximating $\chi_{[0,t)}$. This way, we eventually obtain
\begin{equation}\label{ii}
\ii\eta(t)\phi+\int_0^{t}\ii\je\cdot\nn\phi\geq \ii\eta_0\phi+\int_0^t\ii\xi\phi
\end{equation}
a.e.\ in $(0,T)$ and for all $\phi\in W^{M,2}(\Omega;\R_{\geq0})$, where
\begin{align*}
\je&\coloneqq-\ve\eta+\kappa(\theta)\nn\ln\theta-\mu\la(\theta)\nn(\tr\B-d-\ln\det\B)\in L^1(\Q;\R^d).
\end{align*}
Hence, using \eqref{sssl} and taking $\liminf_{t\to0+}$ of \eqref{ii} (which surely exists due to \eqref{sssl}), we deduce \eqref{Qicet}. Let us now fix $\varphi\in\CO^M(\Omega;\R_{\geq0})$ such that $\ii\varphi=1$. Since $ f$ is convex, we get from \eqref{Qicet} and \eqref{wccB} (or \eqref{QicB}) that
\begin{align*}
\ii c_v\ln\theta_0\,\varphi=\ii\eta_0\varphi+\ii f(\B_0)\varphi
&\leq\liminf_{t\to0+}\ii\eta(t)\varphi+\liminf_{t\to0+}\ii f(\B(t))\varphi\\
&\leq\liminf_{t\to0+}\ii c_v\ln\theta(t)\,\varphi.
\end{align*}
If we use this information together with Jensen's inequality and the~fact that the~function $s\mapsto\exp(s/2)$, is increasing and convex in $\R$, we are led to
\begin{align}\begin{aligned}\label{eqq}
	&\exp\left(\frac12\ii\ln\theta_0\varphi\right)
	\leq\exp\left(\frac12\liminf_{t\to0+}\ii \ln\theta(t)\varphi\right)\\
	&\qquad=\liminf_{t\to0+}\exp\left(\ii \ln \sqrt{\theta(t)}\,\varphi\right)\leq \liminf_{t\to0+}\ii \sqrt{\theta(t)}\varphi\\
	&\qquad=\ii \sqrt{\thlim}\varphi.
\end{aligned}
\end{align}
In every Lebesgue point $x_0\in\Omega$ of both $\ln\theta_0$ and $\thlim$, we can localize inequality~\eqref{eqq} in~$\Omega$ by choosing a~sequence of functions $\varphi$ that approximates the~Dirac delta distribution at $x_0\in\Omega$. Indeed, appealing to the~Lebesgue differentiation theorem, we get this way that
\begin{equation*}
\sqrt{\theta_0(x_0)}=\exp\left(\frac12\ln\theta(x_0)\right)\leq \sqrt{\thlim(x_0)},
\end{equation*}
hence $\theta_0\leq\thlim$ a.e.\ in $\Omega$, which together with \eqref{eslims} implies $\thlim=\theta_0$ a.e.~in $\Omega$. To show the~strong convergence, we use 
\eqref{wcctep} with $\zeta:= \sqrt{\theta_0}$ and also \eqref{eslims}, to deduce 
\begin{align*}
\limsup_{t\to0+}\big\|\sqrt{\theta(t)}-\sqrt{\theta_0}\big\|_2^2
&=\limsup_{t\to0+}\ii\theta(t)+\ii\theta_0 - 2\lim_{t\to0+}\ii\sqrt{\theta(t)}\sqrt{\theta_0}\leq0.
\end{align*}
Hence, the~above inequality implies that
\begin{equation*}
\sqrt{\theta(t)} \to \sqrt{\theta_0} \quad \textrm{ strongly in } L^{2}(\Omega; \R),
\end{equation*}
which implies \eqref{Qict}.

Using information above, we can now improve the~initial condition for $\ve$ as well. Indeed, from \eqref{Ener3}, \eqref{Qict} and \eqref{init}, we obtain
\begin{align*}
\limsup_{t\to0+}\ii\tfrac12|\ve(t)|^2&\leq\limsup_{t\to0+}\ii E(t)-\liminf_{t\to0+}\ii c_v\theta(t)\\
&\leq\ii E_0+\lim_{t\to0+}\int_0^t(\fe,\ve)-\ii c_v\theta_0=\ii\tfrac12|\ve_0|^2.
\end{align*}
Thus, using also \eqref{wccv}, we conclude that
\begin{align*}
\limsup_{t\to0+}\norm{\ve(t)-\ve_0}_2^2=\limsup_{t\to0+}\ii|\ve(t)|^2+\ii|\ve_0|^2-2\lim_{t\to0+}\ii\ve(t)\cdot\ve_0\leq0,
\end{align*}
which implies \eqref{Qicv}.

Finally, since $f$ is strictly convex on $\PD$ as
\begin{equation*}
f''(\B)\A\cdot\A=\mu\B^{-1}\A\B^{-1}\cdot\A=\mu|\B^{-\f12}\A\B^{-\f12}|^2,\quad\B\in\PD,\quad\A\in\R^{d\times d},
\end{equation*}
the strong attainment of the~initial condition for $\B$ \eqref{QicB} follows readily from \eqref{wccB}, the~classical result \cite[Theorem~3~(i)]{Visintin} and the~Vitali's theorem once we show the~property
\begin{equation}\label{supf}
\limsup_{t\to0+}\ii f(\B(t))\leq \ii f(\B_0).
\end{equation}
To this end, we make an observation that in \eqref{2test} (with \eqref{BIde} in place), we can choose $\phi=1$, drop the~non-negative terms, integrate over $(0,t)$ and then estimate the~right-hand side using H\"older inequality, \eqref{integr2} and \eqref{vrh} to obtain
\begin{equation}\label{fif}
\ii f(\B_{\ell}(t))-\ii f(\B^{\f1{\ell}}_0)\leq \int_0^t\ii 2a\mu g_{\f1{\ell}}(\B_{\ell},\theta_{\ell})\B_{\ell}\cdot\Dv_{\ell}\leq Ct^{\f1{2q'}}.
\end{equation}
Note that again we rely on \eqref{dfB} to give a~proper meaning to the~left-hand side of \eqref{fif} for all $t\in(0,T)$. Utilizing now the~convexity and continuity of $f$ on $\PD$ and \eqref{B0con}, taking the~limit $\ell\to\infty$ in \eqref{fif} leads to
\begin{equation*}
\ii f(\B(t))-\ii f(\B_0)\leq \liminf_{\ell\to\infty}\Big(\ii f(\B_{\ell}(t))-\ii f(\B^{\f1{\ell}}_0)\Big)\leq C t^{\f1{2q'}},
\end{equation*}
from which \eqref{supf} immediately follows.  

\section{Global energy equality for \texorpdfstring{$d\le 3$}{dle3}}\label{SS7}

To derive \eqref{QEloc} (which is a~weak version of \eqref{s10}), we need to construct the~pressure $\pr$ and ensure that every term appearing \eqref{QEloc} is integrable. To this end, we apply the~conditions \eqref{A0}. Moreover, we need to be able to test the~momentum equation with $\ve\phi$, where $\phi$ is some smooth function on $\Q$. Unfortunately, we can not do this operation in \eqref{Qv} nor at any stage of our approximation scheme. The~remedy is to truncate the~convection term in the~balance of momentum. However, then we are just mimicking the~existence proof that is done in \cite{Bulicek2009} for a~different non-linear fluid. Thus, let us only verify the~weak compactness of weak solutions $(\ve_{\delta},\pr_{\delta},\B_{\delta},\theta_{\delta},\eta_{\delta})$ to the~system $\di\ve_{\delta}=0$, \eqref{QB}, \eqref{Qent},
\begin{equation}\label{QvE}
\scal{\partial_t\ve_{\delta},\fit}-(T_{\delta}\ve_{\delta}\otimes\ve_{\delta},\nn\fit)_{\Q}+(\Sb_{\delta},\nn\fit)_{\Q}+\alpha(\ve_{\delta}\fit)_{\SigT}=(\pr_{\delta},\di\fit)_{\Q}+(\fe,\fit)_{\Q}
\end{equation}
for all $\fit\in L^{\infty}(0,T;W_{\n}^{1,\infty})$, with $T_{\delta}\ve_{\delta}=((\ve_{\delta}s_{\delta})*r_{\delta})_{\di}$, where $s_{\delta}$ is a~truncation near $\partial\Omega$, $r_{\delta}$ is a~standard mollifier and $(\cdot)_{\di}$ is a~Helmholtz projection onto divergence-free functions, and
\begin{align}\label{QEloc2}
&-(E_0,\phi)\varphi(0)-(E_{\delta},\phi\partial_t\varphi)_{\Q}+\alpha(|\ve_{\delta}|^2,\phi\varphi)_{\SigT}+(\kappa(\theta_{\delta})\nn\theta_{\delta},\nn\phi\varphi)_{\Q}\nonumber\\
&\qquad=(E_{\delta}\ve_{\delta}+\pr_{\delta}\ve_{\delta}-\Sb_{\delta}\ve_{\delta},\nn\phi\varphi)_{\Q}
\end{align}
for all $\varphi\in W^{1,\infty}((0,T);\R)$, $\varphi(T)=0$, and every $\phi\in W^{1,\infty}(\Omega;\R)$. The~existence of such solutions follows by combining the~approximation scheme from Section~\ref{SS1}  together with the~one in \cite{Bulicek2009}. In view of the~uniform estimates derived in Sections~\ref{SS1}--\ref{SS5}, we may suppose that the~sequence $\{(\ve_{\delta},\pr_{\delta},\B_{\delta},\theta_{\delta},\eta_{\delta})\}_{\delta>0}$ is uniformly bounded in the~spaces depicted in \eqref{DR1}--\eqref{DReta} and that we have the~same convergence results as in \eqref{ccvw}--\eqref{cctw} and so forth (with $\ell$ replaced by $\delta$). We may also suppose that, say $\pr_{\delta}\in L^2(\Q;\R)$ with $\int_{\Omega}\pr_{\delta}=0$. Then, since we have $\nu(\theta_{\delta})\Dv_{\delta},\theta_{\delta}\B_{\delta}\in L^{2}(\Q;\Sym)$ and the~convection term is truncated, equation~\eqref{QvE} is valid for all $\fit\in L^{2}(0,T;W^{1,2}_{\n})$, in fact. What is missing is the~uniform estimate of the~pressure. By localizing \eqref{QvE} in time, choosing $\fit=\nn u$ and using $\di\ve_{\delta}=0$, we obtain
\begin{equation*}
-(\pr_{\delta},\Delta u)=(T_{\delta}\ve_{\delta}\otimes\ve_{\delta}-\Sb_{\delta},\nn\nn u)-\alpha(\ve_{\delta},\nn u)_{\partial\Omega}+(\fe,\nn u)
\end{equation*}
a.e.\ in $(0,T)$. There the~convective term, if not truncated, is the~most irregular one (recall that $\norm{\ve_{\delta}\otimes\ve_{\delta}}_{L^{\f{d+2}{d}}L^{\f{d+2}{d}}}\leq C$). Thus, expecting $\pr_{\delta}$ to have the~same integrability, we may choose $u\in W^{2,(\f{d+2}{d})'}(\Omega;\R)$ to be the~solution to the~Neumann problem
\begin{align*}
-\Delta u&=|\pr_0|^{\f{d+2}{d}-2}\pr_0-\f1{|\Omega|}\ii|\pr_0|^{\f{d+2}{d}-2}\pr_0\quad\text{in }\Omega,\\
\nn u \cdot\n&=0\quad\text{on }\partial\Omega
\end{align*}
a.e.\ in $(0,T)$, where $\pr_0=\pr_{\delta}-\f1{|\Omega|}\ii \pr_{\delta}$. Since $\norm{u}_{2,(\f{d+2}{d})'}\leq C\norm{\pr_0}_{\f{d+2}{d}}$ by the~corresponding $L^q$-theory (here we used $\Omega\in\mathcal{C}^{1,1}$), the~test function $u$ eventually leads to
\begin{equation*}
\norm{\pr_{\delta}}_{L^{\f{d+2}{d}}L^{\f{d+2}{d}}}\leq C,
\end{equation*}
see \cite{Bulicek2009} for details.

Taking the~limit $\delta\to0+$ in \eqref{QvE}, \eqref{QB} and \eqref{Qent} can be done analogously as when we considered the~limit $\ell\to\infty$. Indeed, in the~additional term $\int_0^T(\pr_{\delta},\di\fit)$, we simply use the~fact that $\pr_{\delta}\wc \pr$ weakly in $L^{\f{d+2}{d}}(\Q;\R)$. It remains to take the~limit $\delta\to0+$ in \eqref{QEloc2}. Since $\ve_{\delta}$ converges strongly in $L^{2\f{d+2}{d}-\eps}(\Q;\R^d)$ and $d\le 3$, we deduce that the~terms $p_{\delta}\ve_{\delta}$ and $|\ve_{\delta}|^2\ve_{\delta}$ converge weakly to their limits. The~limits in other terms were already discussed and we omit it here. Thus, the~proof  of Theorem~\ref{LG}  is complete.

\appendix

\section{Auxiliary results}\label{CH4}

In this additional section, we prove those auxiliary results which were used above but are not completely standard in the~existing literature. On the~other hand, they are not new and serve only to clarify some arguments used in the~proof.

For the~purposes of this section, we replace the~interval $(0,T)$ (or $[0,T]$) by an~arbitrary bounded interval $I\subset \R$ and set $Q=I\times\Omega$. The~set $\Omega$ is always assumed to be a~bounded Lipschitz domain in $\R^d$, $d\in\N$.

\subsection*{Intersections of Sobolev-Bochner spaces}\label{Ssum}

If $X\overset{\rm dense}{\embl}H\overset{\rm dense}{\embl}X^*$ is a~Gelfand triple, it is well known that
\begin{equation}\label{emboo}
\mathcal{C}^1(I;X)\overset{\rm dense}{\embl}\mathcal{W}_X^p\embl\mathcal{C}(I;H),
\end{equation}
where
\begin{equation*}
\mathcal{W}_{X}^p\coloneqq \big(\{u\in L^p(I;X);\;\partial_t u\in (L^p(I;X))^*\},\norm{\cdot}_{L^pX}+\norm{\partial_t\cdot}_{L^{p'}X^*}\big),\quad 1<p<\infty.
\end{equation*}
The first embedding in \eqref{emboo} is useful to manipulate certain duality pairings involving time derivatives, while the~second embedding is important for the~identification of boundary values (i.e.\ initial conditions) and the~corresponding integration by parts formulas. We would like to generalize \eqref{emboo} for the~space
\begin{align*}
\mathcal{W}_{X,Y}^{p,q}&\coloneqq\big(\{u\in L^{p}(I;X)\cap L^{q}(I;Y);\;\partial_tu\in (L^{p}(I;X)\cap L^{q}(I;Y))^*\},\\
&\;\qquad\norm{\cdot}_{L^{p}X\cap L^{q}Y}+\norm{\partial_t \cdot}_{(L^{p}X\cap L^{q}Y)^*}\big),\quad 1<p,q<\infty,
\end{align*}
The primary application which we have in mind is the~case where $X=W^{1,2}(\Omega)$, $Y=L^{\omega}(\Omega)$ and $\omega>\f{2d}{d-2}$ (i.e., we know better integrability than what follows from the~Sobolev embedding, recall the~function $\B_{\ell}$). Thus, we may assume that both $X$ and $Y$ admit the~Gelfand triplet structure with a~common Hilbert space~$H$.

\begin{lemma}\label{Temb}
Let $1<p,q<\infty$ and suppose that $X$, $Y$ are separable reflexive Banach spaces and $H$ is separable Hilbert space forming Gelfand triples in the~sense that
\begin{equation}\label{gelf1}
X\overset{\rm dense}{\embl} H\overset{\rm dense}{\embl}X^*\quad\text{and}\quad Y\overset{\rm dense}{\embl}H\overset{\rm dense}{\embl}Y^*.
\end{equation}

Then, we have the~embeddings
\begin{equation}\label{coh}
\CO^1(I;X\cap Y)\overset{\rm dense}{\embl}\mathcal{W}_{X,Y}^{p,q}\embl \CO(I; H).
\end{equation}
Moreover, the~integration by parts formula
\begin{equation}\label{ibp}
(u(t_2),v(t_2))_H-(u(t_1),v(t_1))_H=\int_{t_1}^{t_2}\scal{\partial_tu,v}+\int_{t_1}^{t_2}\scal{\partial_tv,u}
\end{equation}
holds for any $u,v\in \mathcal{W}_{X,Y}^{p,q}$ and any $t_1,t_2\in I$.
\end{lemma}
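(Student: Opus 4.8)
The plan is to regard $Z:=X\cap Y$ (with norm $\norm{\cdot}_{X}+\norm{\cdot}_{Y}$) as a separable reflexive Banach space which, together with $H$, forms a Gelfand triple $Z\embl H\embl Z^{*}$ with $Z^{*}=X^{*}+Y^{*}$ (here one uses that $X\cap Y$ is dense in $H$; this is built into \eqref{gelf1} in the relevant cases, and in the application $Z$ contains $\CO^{\infty}(\overline\Omega)$ so it is immediate). The essential point is that one cannot reduce the assertion to a single Gelfand triple with one Lebesgue exponent: on a bounded interval one only has $L^{\max\{p,q\}}(I;Z)\embl L^{p}(I;X)\cap L^{q}(I;Y)\embl L^{\min\{p,q\}}(I;Z)$, and these two inclusions are incompatible unless $p=q$. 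Instead one carries the pair $\mathcal V:=L^{p}(I;X)\cap L^{q}(I;Y)$ and its dual $\mathcal V^{*}=L^{p'}(I;X^{*})+L^{q'}(I;Y^{*})$ through the whole argument, using that $\scal{\partial_tu,v}_{\mathcal V^{*}\times\mathcal V}$ is a well-defined finite quantity for all $u,v\in\mathcal V$ with $\partial_tu\in\mathcal V^{*}$, and that it restricts, for $u(t),v(t)\in Z$, to the pointwise $Z^{*}\times Z$ pairing under the time integral.

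First I would prove interior regularity. For $u\in\mathcal W_{X,Y}^{p,q}$ let $u_{\eps}:=u*\rho_{\eps}$ be the temporal mollification, well defined on $I'\Subset I$; then $u_{\eps}\in\CO^{\infty}(I';Z)$, $\partial_tu_{\eps}=(\partial_tu)*\rho_{\eps}$, and, splitting $\partial_tu=f+g$ with $f\in L^{p'}(I;X^{*})$, $g\in L^{q'}(I;Y^{*})$, one gets $u_{\eps}\to u$ in $\mathcal V(I')$ and $\partial_tu_{\eps}\to\partial_tu$ in $\mathcal V^{*}(I')$. For these smooth approximants the elementary calculus in $Z\embl H\embl Z^{*}$ gives $\frac{\dd{}}{\dd{t}}\norm{u_{\eps}(t)}_{H}^{2}=2\scal{\partial_tu_{\eps}(t),u_{\eps}(t)}_{Z^{*}\times Z}$, hence
\begin{equation*}
\norm{u_{\eps}(t_{2})}_{H}^{2}-\norm{u_{\eps}(t_{1})}_{H}^{2}=2\scal{\partial_tu_{\eps},\chi_{(t_{1},t_{2})}u_{\eps}}_{\mathcal V^{*}\times\mathcal V}.
\end{equation*}
Since $\chi_{(t_{1},t_{2})}u_{\eps}\to\chi_{(t_{1},t_{2})}u$ in $\mathcal V$ and $\partial_tu_{\eps}\to\partial_tu$ in $\mathcal V^{*}$, the right-hand side converges; on the left one uses $u_{\eps}\to u$ in $L^{\min\{p,q\}}(I';H)$ and passes to a subsequence converging for a.e.\ $t_{1},t_{2}$. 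This yields $\norm{u(t_{2})}_{H}^{2}-\norm{u(t_{1})}_{H}^{2}=2\scal{\partial_tu,\chi_{(t_{1},t_{2})}u}_{\mathcal V^{*}\times\mathcal V}$ for a.e.\ $t_{1}<t_{2}$, whose right-hand side is continuous in $t_{1},t_{2}$ because $t\mapsto\chi_{(t_{0},t)}u$ is continuous into $\mathcal V$; therefore $t\mapsto\norm{u(t)}_{H}$ has a continuous representative and $u\in L^{\infty}(I;H)$.

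Running the same mollification against a fixed $z\in Z$ gives $(u(t_{2}),z)_{H}-(u(t_{1}),z)_{H}=\scal{\partial_tu,\chi_{(t_{1},t_{2})}z}_{\mathcal V^{*}\times\mathcal V}$, so $t\mapsto(u(t),z)_{H}$ is continuous for every $z$ in the dense set $Z\subset H$; together with $u\in L^{\infty}(I;H)$ this gives weak continuity $I\to H$, and combined with continuity of $\norm{u(\cdot)}_{H}$ it gives strong continuity, first on the open interval and then, since the pairing on the right extends continuously to the endpoints, on $\overline I$. This proves the second embedding in \eqref{coh}. The integration-by-parts formula \eqref{ibp} follows identically, starting from $(u_{\eps}(t_{2}),v_{\eps}(t_{2}))_{H}-(u_{\eps}(t_{1}),v_{\eps}(t_{1}))_{H}=\int_{t_{1}}^{t_{2}}(\scal{\partial_tu_{\eps},v_{\eps}}+\scal{\partial_tv_{\eps},u_{\eps}})$ and passing to the limit. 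For the density statement in \eqref{coh}, one extends $u\in\CO(\overline I;H)\cap\mathcal W_{X,Y}^{p,q}$ to a slightly larger interval by reflection — which, thanks to the continuity of $u$ into $H$ at the endpoints already obtained, produces no singular part in $\partial_t\widetilde u$ — and mollifies globally, obtaining functions in $\CO^{\infty}(\overline I;Z)$ converging to $u$ in $\mathcal W_{X,Y}^{p,q}$.

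The main obstacle is exactly the exponent mismatch described above: every ``energy-type'' identity must be phrased as a $\mathcal V^{*}\times\mathcal V$ duality pairing, which is automatically finite and stable under the mollification limit, rather than as a pointwise H\"older estimate in $t$, which would fail whenever $p\neq q$. Once this viewpoint is fixed, the remaining ingredients — density of temporal mollifications in the intersection Sobolev--Bochner space, the reflection extension at the endpoints, and extraction of a.e.-convergent subsequences in $H$ — are routine.
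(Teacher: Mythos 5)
Your argument is correct but follows a genuinely different route from the paper. The paper first establishes the density $\CO^1(I;X\cap Y)\overset{\rm dense}{\embl}\mathcal{W}_{X,Y}^{p,q}$ by citing the standard even-extension and mollification argument of Gajewski--Gr\"oger--Zacharias and Zeidler, then proves \eqref{ibp} by direct calculus for $\CO^1$ functions, then derives an a priori bound \eqref{OD} of the form $\norm{u(t)}_H\leq C(\norm{u}_{L^1H}+\norm{u}_{\mathcal{W}_{X,Y}^{p,q}})$ for smooth $u$ by proceeding as in Roub\'{\i}\v{c}ek's Lemma~7.3, upgrades it to the uniform bound \eqref{vno}, and finally transfers both the bound and \eqref{ibp} to all of $\mathcal{W}_{X,Y}^{p,q}$ by density; continuity comes last, from \eqref{ibp} with $v=u$ and $t_2\to t_1$. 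You instead work directly with an arbitrary $u\in\mathcal{W}_{X,Y}^{p,q}$, use interior temporal mollification, derive the energy identity a.e., and extract the continuous $H$-valued representative from the continuity of $t\mapsto\chi_{(t_1,t)}u$ into $\mathcal{V}:=L^p(I;X)\cap L^q(I;Y)$; density is then established last, by reflection and global mollification. Both routes treat the nonstandard point --- the exponent mismatch $p\neq q$, which forces the duality pairing to live in $\mathcal{V}^*\times\mathcal{V}$ rather than in a single Gelfand triple --- in essentially the same way. Your version is more self-contained, bypassing the Roub\'{\i}\v{c}ek-type a priori estimate and keeping the $\mathcal{V}^*\times\mathcal{V}$ pairing explicit throughout, whereas the paper's is more modular and shorter given its citations. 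One small imprecision in your write-up: to justify that even reflection at the endpoints creates no singular part in $\partial_t\widetilde u$, you invoke the $H$-continuity already obtained, but this is stronger than necessary --- an absolutely continuous $(X^*+Y^*)$-valued representative, which follows from $\partial_t u\in L^1(I;X^*+Y^*)$ alone, already rules out a Dirac mass at the endpoints. Noting this makes the density step logically independent of the continuity result, so your ordering has no hidden circularity even without that appeal.
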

\begin{proof}
The proof of the~first embedding in~\eqref{coh} can be done in a~standard way by extending $u$ outside $I$ evenly, taking the~convolution with a~smooth kernel and then estimating the~difference from $u$ and $\partial_t u$ in the~respective norms. See \cite{Gaevski1978} or \cite{Zeidler1990} for details.

If $u,v\in \CO^1(I;X\cap Y)\embl\CO(I;H)$, then $\partial_t u,\partial_t v\in \CO(I;X\cap Y)\embl \CO(I;H)$ and, using density of the~embeddings in~\eqref{gelf1}, the~duality in~\eqref{ibp} can be represented as
\begin{equation*}
\scal{\partial_tu,v}+\scal{\partial_tv,u}=(\partial_tu,v)_H+(\partial_tv,u)_H=\partial_t(u,v)_H\quad\text{a.e.\ in }I,
\end{equation*}
hence \eqref{ibp} is obvious in that case. Next, we can proceed as in~\cite[Lemma~7.3.]{Roubicek2013} to prove that
\begin{align}\begin{aligned}\label{OD}
\norm{u(t)}_H&\leq C(\norm{u}_{L^1H}+\norm{u}_{\mathcal{W}^{p,q}_{X,Y}})
\end{aligned}\end{align}
for all $t\in I$ and every $u\in\mathcal{C}^1(I;X\cap Y)$. Moreover, by \eqref{gelf1}, we have
\begin{equation*}
\mathcal{W}_{X,Y}^{p,q}\embl L^{p}(I;X)\cap L^{q}(I;Y)\embl L^1(I;X)\cap L^1(I;Y)\embl L^1(I;X + Y)\embl L^1(I;H),
\end{equation*}
and thus \eqref{OD} yields
\begin{equation}\label{vno}
\norm{u}_{\CO(I;H)}\leq C\norm{u}_{\mathcal{W}_{X,Y}^{p,q}}.
\end{equation}
Since $\CO^1(I;X\cap Y)$ is dense in $\mathcal{W}_{X,Y}^{p,q}$, the~estimate \eqref{vno} and identity~\eqref{ibp} remain valid for all $u\in \mathcal{W}_{X,Y}^{p,q}$. Moreover, if $u\in \mathcal{W}_{X,Y}^{p,q}$, then we can take $v=u$ and $t_2\to t_1$ in~\eqref{ibp} to deduce that $u\in \CO(I;H)$. Thus, the~embedding $\mathcal{W}_{X,Y}^{p,q}\embl \CO(I;H)$ holds and the~proof is finished.
\end{proof}

Since $\mathcal{W}_{X,X}^{p,p}=\mathcal{W}_X^p$, the~classical result \eqref{emboo} can be seen as an~obvious corollary.

\subsection*{Fundamental theorem of calculus in the~Sobolev-Bochner setting}

Let $H=L^2(\Omega)$. The~formula \eqref{ibp} can be used to identify that
\begin{equation}\label{s}
\scal{\partial_t u,u}=\f12\f{\dd{}}{\dd{t}}\ii u^2
\end{equation}
a.e.\ in $I$. However, in certain situations we would like to generalize \eqref{s} to
\begin{equation*}
\scal{\partial_t u,\psi(u)}=\f{\dd{}}{\dd{t}}\ii\int_{w}^{u}\psi(s)\dd{s}.
\end{equation*}
Whether this is possible depends on what kind of function $\psi$ is and also on the~choice of $X$. The~next lemma characterizes one such situation.

\begin{lemma}\label{Ldual}
Let $1<p,q<\infty$. Suppose that $\psi:\R\to\R$ is a~Lipschitz function. For $w\in\R$, we define
\begin{equation*}
\Psi(x)=\int_{w}^x\psi(s)\dd{s},\quad x\in\R.
\end{equation*}

Then, for any $u\in\mathcal{W}^p_{W^{1,q}(\Omega)}$, there holds
\begin{equation}\label{Psic}
\Psi(u)\in \CO(I;L^1(\Omega))
\end{equation}
and
\begin{equation}\label{vzra}
\int_{t_1}^{t_2}\scal{\partial_t u,\psi(u)}=\ii\Psi(u(t_2))-\ii\Psi(u(t_1))\quad\text{for all }t_1,t_2\in I.
\end{equation}

Moreover, if $\psi$ is bounded, then
\begin{equation*}
\Psi(u)\in\CO(I;L^2(\Omega)).
\end{equation*}
\end{lemma}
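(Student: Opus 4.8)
The plan is to reduce everything to the smooth case by means of the density $\CO^1(I;W^{1,q}(\Omega))\overset{\rm dense}{\embl}\mathcal{W}^p_{W^{1,q}(\Omega)}$ from \eqref{emboo}, regarding $W^{1,q}(\Omega)\embl L^2(\Omega)\embl W^{-1,q'}(\Omega)$ as a Gelfand triple (as in every application of this lemma in the paper), so that in particular $\mathcal{W}^p_{W^{1,q}(\Omega)}\embl\CO(I;L^2(\Omega))$ and $\bigl(L^p(I;W^{1,q}(\Omega))\bigr)^*=L^{p'}(I;W^{-1,q'}(\Omega))$. First I would record two elementary facts that use only Lipschitz continuity of $\psi$: the pointwise bound $|\Psi(a)-\Psi(b)|\le(|\psi(0)|+\mathrm{Lip}(\psi)\max\{|a|,|b|\})|a-b|$, which together with the Cauchy--Schwarz inequality gives $\norm{\Psi(v)-\Psi(w)}_{L^1(\Omega)}\le(|\psi(0)||\Omega|^{\f12}+\mathrm{Lip}(\psi)(\norm{v}_2+\norm{w}_2))\norm{v-w}_2$; and the Sobolev chain rule $\psi(v)\in W^{1,q}(\Omega)$ with $\nabla\psi(v)=\psi'(v)\nabla v$ and $\norm{\psi(v)}_{1,q}\le C(1+\norm{v}_{1,q})$ for $v\in W^{1,q}(\Omega)$. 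The second fact shows $\psi(u)\in L^p(I;W^{1,q}(\Omega))$ whenever $u\in L^p(I;W^{1,q}(\Omega))$, so that the duality pairing $\scal{\partial_t u,\psi(u)}$ in \eqref{vzra} is meaningful.

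For $u\in\CO^1(I;W^{1,q}(\Omega))$ the identity \eqref{vzra} is classical: writing $\ii(\Psi(u(t+h))-\Psi(u(t)))=\ii\int_0^1\psi(u(t)+\sigma(u(t+h)-u(t)))\,(u(t+h)-u(t))\dd{\sigma}$, dividing by $h$ and letting $h\to0$ (using $h^{-1}(u(t+h)-u(t))\to\partial_t u(t)$ in $W^{1,q}(\Omega)$, hence in $L^2(\Omega)$, and continuity of $\psi$) yields $\f{\dd{}}{\dd{t}}\ii\Psi(u)=\scal{\partial_t u,\psi(u)}$ a.e.\ in $I$, and integration gives \eqref{vzra}; the continuity \eqref{Psic} is then immediate from the first fact and $u\in\CO(I;L^2(\Omega))$. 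For general $u\in\mathcal{W}^p_{W^{1,q}(\Omega)}$ I would approximate by $u_n\in\CO^1(I;W^{1,q}(\Omega))$ with $u_n\to u$ in $\mathcal{W}^p_{W^{1,q}(\Omega)}$; then also $u_n\to u$ in $\CO(I;L^2(\Omega))$, so the right-hand side of \eqref{vzra} passes to the limit for each fixed $t$ by the $L^1$-estimate above (using the uniform bound on $\norm{u_n(t)}_2$), and \eqref{Psic} for $u$ — together with the stronger statement $\Psi(u)\in\CO(I;L^2(\Omega))$ when $\psi$ is bounded, which then follows from the global Lipschitz bound $|\Psi(a)-\Psi(b)|\le\norm{\psi}_\infty|a-b|$ — follows from the same estimate applied to $u$ itself.

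The one non-routine point, and the step I expect to be the main obstacle, is the passage to the limit in the duality term $\int_{t_1}^{t_2}\scal{\partial_t u_n,\psi(u_n)}$. Since $\partial_t u_n\to\partial_t u$ strongly in $L^{p'}(I;W^{-1,q'}(\Omega))$, it suffices to show $\psi(u_n)\wc\psi(u)$ weakly in $L^p(I;W^{1,q}(\Omega))$; and here one cannot pass to the limit pointwise in $\nabla\psi(u_n)=\psi'(u_n)\nabla u_n$, because $v\mapsto\psi(v)$ need not be strongly continuous on $W^{1,q}(\Omega)$ when $\psi$ is merely Lipschitz. Instead I would argue as follows: $\psi(u_n)\to\psi(u)$ strongly in $L^p(I;L^q(\Omega))$ by the Lipschitz bound and $u_n\to u$ in $L^p(I;L^q(\Omega))$, while $\{\nabla\psi(u_n)\}$ is bounded in $L^p(I;L^q(\Omega))$; hence $\{\psi(u_n)\}$ is bounded in the reflexive space $L^p(I;W^{1,q}(\Omega))$, and the distributional gradient of the $L^pL^q$-limit of any weakly convergent subsequence coincides with the weak $L^pL^q$-limit of $\nabla\psi(u_n)$. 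Since that $L^pL^q$-limit of $\psi(u_n)$ is $\psi(u)$, which belongs to $L^p(I;W^{1,q}(\Omega))$ by the chain rule, uniqueness of distributional derivatives forces every weak limit to be $\psi(u)$; the whole sequence therefore converges weakly, and the weak--strong pairing converges, completing the proof. The remainder is bookkeeping with the embedding \eqref{emboo}.
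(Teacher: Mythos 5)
Your proof is correct and follows essentially the same route as the paper: approximate $u$ by $\mathcal{C}^1(I;W^{1,q}(\Omega))$ functions via the density part of Lemma~\ref{Temb}, establish the identity in the smooth case, then pass to the limit using boundedness of $\psi(u_n)$ in $L^p(I;W^{1,q}(\Omega))$ together with a weak--strong pairing in the duality term and the $\CO(I;L^2(\Omega))$-embedding for the right-hand side. The only (cosmetic) difference is in how the weak $L^p(I;W^{1,q}(\Omega))$-limit of $\psi(u_n)$ is identified as $\psi(u)$: you use strong $L^p(I;L^q(\Omega))$-convergence of $\psi(u_n)$ plus the chain rule to pin down the limit, whereas the paper extracts a pointwise a.e.\ convergent subsequence of $u_n$ and invokes continuity of $\psi$.
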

\begin{proof}
First of all, we remark that $\psi(u)\in W^{1,q}(\Omega)$ a.e.\ in $I$, by a~classical result (see e.g. \cite[Theorem~2.1.11.]{Ziemer1989}), and thus the~duality in~\eqref{vzra} is well defined. Next, we apply Theorem~\ref{Temb} to find $u_{\eps}\in\CO^1(I;W^{1,q}(\Omega))$ satisfying
\begin{equation}\label{stro}
\norm{u_{\eps}-u}_{L^pW^{1,q}}+\norm{\partial_tu_{\eps}-\partial_tu}_{L^{p'}W^{-1,q'}}\to0\quad\text{as }\eps\to0+.
\end{equation}
Then, using the~standard calculus, it is easy to see that the~identity
\begin{equation}\begin{aligned}\label{ideap}
\int_{t_1}^{t_2}\scal{\partial_t u_{\eps},\psi(u_{\eps})}&=\int_{t_1}^{t_2}\ii\psi(u_{\eps})\partial_tu_{\eps}\\
&=\int_{t_1}^{t_2}\ii\partial_t\Psi(u_{\eps})=\ii\Psi(u_{\eps}(t_2))-\ii\Psi(u_{\eps}(t_1))
\end{aligned}\end{equation}
holds for any $t_1,t_2\in I$. Denoting the~Lipschitz constant of $\psi$ by $L\geq0$, we estimate
\begin{equation*}
|\psi(u_{\eps})|\leq |\psi(u_{\eps})-\psi(0)|+|\psi(0)|\leq L|u_{\eps}|+|\psi(0)|
\end{equation*}
and
\begin{equation*}
|\nn\psi(u_{\eps})|\leq|\psi'(u_{\eps})||\nn u_{\eps}|\leq L|\nn u_{\eps}|.
\end{equation*}
Hence, the~sequence $\psi(u_{\eps})$ is bounded in $L^p(I;W^{1,q}(\Omega))$. As $1<p,q<\infty$, this is a~separable reflexive space, and thus, there exist a~subsequence and its limit $\overline{\psi(u)}\in L^p(I;W^{1,q}(\Omega))$ such that
\begin{equation}\label{wea}
\psi(u_{\eps})\wc\overline{\psi(u)}\quad\text{weakly in }L^p(I;W^{1,q}(\Omega)).
\end{equation}
Since $p>1$, a~subsequence of $u_{\eps}$ converges point-wise a.e.\ in $Q$ to $u$, and thus $\overline{\psi(u)}=\psi(u)$ using the~continuity of $\psi$. Hence, by \eqref{stro} and \eqref{wea}, we obtain
\begin{equation}\begin{aligned}\label{duali}
\int_{t_1}^{t_2}\scal{\partial_t u_{\eps},\psi(u_{\eps})}
&=\int_{t_1}^{t_2}\scal{\partial_t u_{\eps}-\partial_t u,\psi(u_{\eps})}+\int_{t_1}^{t_2}\scal{\partial_t u,\psi(u_{\eps})}\\
&\to\int_{t_1}^{t_2}\scal{\partial_tu,\psi(u)}
\end{aligned}\end{equation}
as $\eps\to0+$. Next, using the~embedding $\mathcal{W}^{p}_{W^{1,q}(\Omega)}\embl\CO(I;L^2(\Omega))$ and \eqref{stro}, we get, for any $t_0\in I$, that
\begin{equation}\label{stejno}
\norm{u(t)-u(t_0)}_2\to0\quad\text{as}\quad t\to t_0
\end{equation}
and
\begin{equation}\label{normo}
\norm{u_{\eps}(t_0)-u(t_0)}_{2}\to0\quad\text{as}\quad\eps\to{0+}.
\end{equation}
Then, the~Lipschitz continuity of $\psi$, H\"older's inequality and \eqref{stejno} yield
\begin{align}\label{esssa}
&\ii|\Psi(u(t))-\Psi(u(t_0))|=\ii\left|\int_{u(t_0)}^{u(t)}\psi(s)\dd{s}\right|\leq\ii \int_{u(t_0)}^{u(t)}(|\psi(0)|+L|s|)\dd{s}\nonumber\\[-.1cm]
&\quad\leq\ii\int_{u(t_0)}^{u(t)}C(1+|u(t_0)|+|u(t)|)\leq C\ii(1+|u(t_0)|+|u(t)|)|u(t)-u(t_0)|\nonumber\\
&\quad\leq C\norm{1+|u(t_0)|+|u(t)|}_2\norm{u(t)-u(t_0)}_2\leq C\norm{u(t)-u(t_0)}_2\to0
\end{align}
as $t\to t_0$, which proves \eqref{Psic} (and thus, the~values $\Phi(u(t))$, $t\in I$, are well defined). By an~analogous estimate, using \eqref{normo} instead of \eqref{stejno}, we can prove that
\begin{equation*}
\ii|\Psi(u_{\eps}(t_0))-\Psi(u(t_0))|\to0\quad \text{as }\eps\to{0+}
\end{equation*}
for any $t\in I$. This and \eqref{duali} used in~\eqref{ideap} to take the~limit $\eps\to0+$ proves \eqref{vzra}.

If $\psi$ is bounded, we replace \eqref{esssa} by
\begin{equation*}
\ii|\Psi(u(t))-\Psi(u(t_0))|^2=\ii\left|\int_{u(t_0)}^{u(t)}\psi(s)\dd{s}\right|^2\leq C\ii|u(t)-u(t_0)|^2
\end{equation*}
and the~rest of the~proof remains the~same.
\end{proof}

Clearly, we can also replace $\psi$ by $\psi\phi$, where $\phi\in W^{1,\infty}(\Omega;\R)$, leading to
\begin{equation}\label{vzra2}
\int_0^t\scal{\partial_t u,\psi(u)\phi}=\ii\int_{w}^{u(t)}\psi(s)\dd{s}\,\phi-\ii\int_{w}^{u(0)}\psi(s)\dd{s}\,\phi\quad\text{for all }t\in I.
\end{equation}
Then, since $\phi$ is a~Lipschitz (time independent) function, the~proof is basically the~same as the~one presented above.

\subsection*{Calculus for positive definite matrices}

We recall that the~operations ``$\cdot$'' and $|\cdot|$ on matrices are defined by
\begin{equation*}
\A_1\cdot\A_2=\sum_{i=1}^d\sum_{j=1}^d(\A_1)_{ij}(\A_2)_{ij}\quad\text{and}\quad|\A|=\sqrt{\A\cdot\A},
\end{equation*}
respectively. Then, the~object $|\A|$ coincides, in fact, with the~Frobenius matrix norm of $\A$.

The next lemma is formulated for a~function $\A:Q\to\PD$ and for simplicity, we shall assume that $\A$ is continuously differentiable with respect to all variables, i.e., $\A\in\mathcal{C}^{1}(Q;\PD)$. In particular situations, this assumption can be of course removed by an appropriate approximation (convolution smoothing) and the~assertions of the~following lemma hereby extend to the~setting of weakly differentiable functions. Let us also denote any of the~space-time derivatives by a~generic symbol $\partial$.

\begin{lemma}\label{PDalg}
Let $\A\in\mathcal{C}^1(Q;\PD)$. Then
\begin{align}
&{\rm(i)} &0&\leq\tr\A-d-\ln\det\A,&&\label{II}\\
&{\rm(ii)} &|\A|&\leq\tr\A\leq\sqrt{d}|\A|,&&\label{I7}\\
&{\rm(iii)} &\min\{1,d^{\f{1-\alpha}2}\}|\A|^{\alpha}&\leq|\A^{\alpha}|\leq\max\{1,d^{\f{1-\alpha}2}\}|\A|^{\alpha}\quad\text{for any}\quad\alpha\geq0,&&\label{I8}\\
&{\rm(iv)} &\partial\A\cdot\A^{\alpha}&=\Big\{\begin{matrix}
\f1{\alpha+1}\partial\tr\A^{\alpha+1}&\text{if}\quad\alpha\neq-1;\\
\partial\ln\det\A=\partial\tr\log\A&\text{if}\quad\alpha=-1,
\end{matrix}&&\label{IJ1}\\
&{\rm(v)} &({\rm sign}\,\alpha)\partial\A\cdot\partial\A^{\alpha}&\geq\Big\{\begin{matrix}
\f{4|\alpha|}{(\alpha+1)^2}|\partial\A^{\f{\alpha+1}2}|^2&\text{if}\quad\alpha\neq-1;\\
|\partial\log\A|^2&\text{if}\quad\alpha=-1,
\end{matrix}&&\label{IJ2}\\
&{\rm(vi)} &|\partial\A|&\leq2|\A^{1-\alpha}\partial\A^{\alpha}|\quad\text{for all}\quad\alpha\in[\tfrac12,1).&&\label{IJ3}
\end{align}
\end{lemma}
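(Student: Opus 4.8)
The plan is to observe that every assertion of Lemma~\ref{PDalg} is pointwise, so one may fix $(t,x)\in Q$ and work with the single matrix $\A=\A(t,x)\in\PD$. For (i)--(iii) I would simply diagonalise: listing the eigenvalues with multiplicity as $\lambda_1,\dots,\lambda_d>0$ we have $\tr\A=\sum_i\lambda_i$, $|\A|^2=\sum_i\lambda_i^2$ and $\ln\det\A=\sum_i\ln\lambda_i$. Then (i) is $\sum_i(\lambda_i-1-\ln\lambda_i)\ge0$, immediate from $s-1-\ln s\ge0$ for $s>0$; (ii) follows from $\sum_i\lambda_i^2\le\big(\sum_i\lambda_i\big)^2$ (nonnegative cross terms) together with Cauchy--Schwarz $\sum_i\lambda_i\le\sqrt{d}\,\big(\sum_i\lambda_i^2\big)^{1/2}$; and (iii), after squaring and setting $\mu_i=\lambda_i^2$, reduces to $\min\{1,d^{1-\alpha}\}\big(\sum_i\mu_i\big)^{\alpha}\le\sum_i\mu_i^{\alpha}\le\max\{1,d^{1-\alpha}\}\big(\sum_i\mu_i\big)^{\alpha}$, which is sub-/super-additivity of $s\mapsto s^{\alpha}$ combined with the power-mean (Jensen) inequality, according to whether $\alpha\le1$ or $\alpha\ge1$.

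For (iv) the statement is the trace-differentiation identity $\partial\tr g(\A)=g'(\A)\cdot\partial\A$, valid for any $C^1$ function $g$ defined near the bounded spectrum of $\A$: for polynomial $g$ it follows from $\partial\tr\A^n=\sum_{j=0}^{n-1}\tr\big(\A^{j}(\partial\A)\A^{n-1-j}\big)=n\,\tr(\A^{n-1}\partial\A)$ by cyclicity of the trace, and the general case by polynomial approximation of $g$ and $g'$ on a neighbourhood of the spectrum. Applying this with $g(s)=s^{\alpha+1}/(\alpha+1)$ gives the first line of (iv), and with $g=\ln$, using $\tr\log\A=\ln\det\A$, the second.

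The real work is in (v)--(vi), and here I would invoke the Daleckii--Krein formula: if $\lambda_1,\dots,\lambda_k$ are the \emph{distinct} eigenvalues of $\A$ with spectral projections $P_1,\dots,P_k$, then at any point where $\A$ is differentiable
\[
\partial f(\A)=\sum_{i,j}f^{[1]}(\lambda_i,\lambda_j)\,P_i(\partial\A)P_j,\qquad f^{[1]}(\lambda,\mu)=\frac{f(\lambda)-f(\mu)}{\lambda-\mu}\ \big(=f'(\lambda)\text{ if }\lambda=\mu\big),
\]
obtained by differentiating the Cauchy integral $f(\A)=\tfrac1{2\pi i}\oint f(z)(z\I-\A)^{-1}\dd{z}$. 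Since the blocks $\{P_iMP_j\}_{i,j}$ are mutually orthogonal in the Frobenius inner product, writing $\partial\A=\sum_{i,j}P_i(\partial\A)P_j$ and contracting turns both sides of (v) and of (vi) into sums $\sum_{i,j}(\cdots)\,|P_i(\partial\A)P_j|^2$, so it suffices to verify one scalar inequality per pair of eigenvalues. For (v) with $\alpha>0$ I would substitute $\B:=\A^{(\alpha+1)/2}$ and $\gamma:=2/(\alpha+1)\in(0,2)$, so that $\A=\B^{\gamma}$, $\A^{\alpha}=\B^{2-\gamma}$, $\partial\A^{(\alpha+1)/2}=\partial\B$ and $\tfrac{4\alpha}{(\alpha+1)^2}=\gamma(2-\gamma)$; the claim then reduces, via the formula above applied to $\B$, to
\[
\frac{(\mu^{\gamma}-\nu^{\gamma})(\mu^{2-\gamma}-\nu^{2-\gamma})}{(\mu-\nu)^2}\ \ge\ \gamma(2-\gamma)\qquad(\mu,\nu>0),
\]
which one proves by writing the two factors as $\gamma\int_{\nu}^{\mu}s^{\gamma-1}\dd{s}$ and $(2-\gamma)\int_{\nu}^{\mu}u^{1-\gamma}\dd{u}$, symmetrising the resulting double integral, and applying $(s/u)^{\gamma-1}+(u/s)^{\gamma-1}\ge2$. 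The remaining cases $\alpha\in(-\infty,-1)\cup(-1,0)$ are the same computation after tracking signs, $\alpha=-1$ is exactly the logarithmic--geometric mean inequality $\frac{\ln\mu-\ln\nu}{\mu-\nu}\le(\mu\nu)^{-1/2}$, and $\alpha=0$ is trivial. For (vi) I would write $\mathbb{C}:=\A^{\alpha}$, so that $\A=\mathbb{C}^{1/\alpha}$ with $1/\alpha\in(1,2]$ and $\A^{1-\alpha}=\mathbb{C}^{1/\alpha-1}$; the product rule gives $\partial\A=(\partial\mathbb{C})\,\mathbb{C}^{1/\alpha-1}+\mathbb{C}\,\partial\big(\mathbb{C}^{1/\alpha-1}\big)$, whose first summand has Frobenius norm $|\mathbb{C}^{1/\alpha-1}(\partial\mathbb{C})|$ (transpose it, both factors symmetric), while $|\mathbb{C}\,\partial(\mathbb{C}^{q})|\le|\mathbb{C}^{q}(\partial\mathbb{C})|$ for $q:=1/\alpha-1\in(0,1]$ follows, by the same reduction, from the elementary per-pair bound $\frac{\mu^{q}-\nu^{q}}{\mu-\nu}\le\mu^{q-1}$ ($\mu,\nu>0$). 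The triangle inequality then gives $|\partial\A|\le2|\A^{1-\alpha}\partial\A^{\alpha}|$.

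The step I expect to require care is the legitimacy of the reductions in (iv)--(vi): although $\A\in\mathcal{C}^1(Q;\PD)$, its eigenvalues are only Lipschitz in $(t,x)$ and the spectral projections need not be continuous where eigenvalues coalesce, so one must not differentiate a spectral decomposition directly. This is circumvented by deriving the trace identity from the polynomial/trace-cyclicity argument and the Daleckii--Krein formula from the resolvent representation of $g(\A)$ — both of which are genuinely pointwise and require only differentiability of $\A$ at the point in question — after which the remaining scalar inequalities involve only the eigenvalues of $\A$ at that single point, where they are unambiguous. The extension from $\mathcal{C}^1$ to weakly differentiable $\A$ used in the body of the paper is then the routine mollification argument indicated in the statement of the lemma.
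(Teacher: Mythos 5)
Your proof is correct, but it takes a genuinely different route from the paper's for the harder items (iii)--(vi). For (i)--(ii) the arguments are essentially the same elementary spectral computations, although the paper's (ii) is a slicker one-line Cauchy--Schwarz in matrix form: $|\A|=|(\A^{1/2})^T\A^{1/2}|\le|\A^{1/2}|^2=\tr\A=\I\cdot\A\le\sqrt d\,|\A|$. For (iii), (iv), (v) the paper does not prove anything at all: it simply cites \cite{BathoryOdh} (Prop.~1 and Thm.~1 there); your eigenvalue/Jensen argument for (iii), trace-cyclicity plus polynomial approximation for (iv), and Daleckii--Krein reduction for (v) supply self-contained proofs. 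The most substantive difference is (vi): the paper works with integer powers $p,q$ satisfying $q-p\le p<q$ and $\alpha=p/q$, expands $\partial\B^{q}=\partial\B^{q-p}\B^{p}+\B^{q-p}\partial\B^{p}$, applies Young's inequality $|\partial\B^q|^2\le 2A+2B$, then compares the two sums
\begin{equation*}
A=\sum_{s=0}^{2(q-p-1)}\bigl(1+\min\{s,2(q-p-1)-s\}\bigr)\bigl|\B^{s/2}\partial\B\,\B^{q-1-s/2}\bigr|^2,\quad B=\sum_{s=0}^{2(p-1)}\bigl(1+\min\{s,2(p-1)-s\}\bigr)\bigl|\B^{s/2}\partial\B\,\B^{q-1-s/2}\bigr|^2,
\end{equation*}
observes $A\le B$ term by term because $q-p\le p$, and only afterwards passes from rational to real $\alpha$ by a density/continuity argument. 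Your proof avoids the combinatorics and the density passage entirely: the Daleckii--Krein formula and orthogonality of the blocks $P_i(\partial\A)P_j$ reduce the matrix inequality at once to the single scalar divided-difference bound $\bigl|\tfrac{\mu^{q}-\nu^{q}}{\mu-\nu}\bigr|\le\mu^{q-1}$ for $q\in(0,1]$, which you verify directly (and which indeed holds for both orderings of $\mu,\nu$, as a short algebraic check confirms). The trade-off is clear: the paper's route stays within purely algebraic manipulations of integer matrix powers and needs one external reference and one density step, while your route invokes the analytic functional calculus (Cauchy integral/Daleckii--Krein) but is uniform in $\alpha$, more systematic, and handles (v) and (vi) by the same mechanism. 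Both are valid; yours is arguably cleaner and entirely self-contained, at the price of somewhat heavier machinery.
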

\begin{proof}
Property (i) follows by passing to the~spectral decomposition of $\A$ and from the~fact that $x\mapsto x-1-\ln x$ attains its minimum at $x=1$. Estimate (ii) is a~consequence of the~Cauchy--Schwarz inequality since
\begin{align*}
|\A|&=|(\A^{\f12})^T\A^{\f12}|\leq|\A^{\f12}|^2=\tr\A=\I\cdot\A\leq|\I||\A|=\sqrt{d}|\A|.
\end{align*}
For (iii), we refer to \cite[Proposition~1]{BathoryOdh} and for (iv), (v) to \cite[Theorem~1]{BathoryOdh}. The~relation (iv) with $\alpha=-1$ is also known as the~Jacobi identity.

Finally, property \eqref{IJ3} can be shown using the~idea from the~proof of \cite[Theorem~3]{BathoryOdh}, which we briefly sketch here. For any natural numbers $p,q$ satisfying $q-p\leq p<q$ (so that $\alpha=\f pq\in[\f12,1)$), we may use the~Young inequality to write
\begin{equation}
|\partial\B^q|^2=|\partial\B^{q-p}\B^p+\B^{q-p}\partial\B^p|^2\leq2|\partial\B^{q-p}\B^p|^2+2|\B^{q-p}\partial\B^p|^2=:2A+2B.\label{AB}
\end{equation}
Now we simply expand the~derivative and rearrange the~terms to get
\begin{align}
A&=\Big|\sum_{i=0}^{q-p-1}\B^i\partial\B\B^{q-1-i}\Big|^2=\sum_{i=0}^{q-p-1}\sum_{j=0}^{q-p-1}\big|\B^{\f{i+j}2}\partial\B\B^{q-1-\f{i+j}2}\big|^2\nonumber\\
&=\sum_{s=0}^{2(q-p-1)}(1+\min\{s,2(q-p-1)-s\})\big|\B^{\f s2}\partial\B\B^{q-1-\f{s}2}\big|^2\label{Aest},
\end{align}
whereas for $B$, a~completely analogous computation yields
\begin{equation*}
B=\sum_{s=0}^{2(p-1)}(1+\min\{s,2(p-1)-s\})\big|\B^{\f s2}\partial\B\B^{q-1-\f{s}2}\big|^2.
\end{equation*}
Then, using $q-p\leq p$ first inside the~minimum in \eqref{Aest} and then in the~number of terms of the~sum (relying on the~non-negativity of each term), we see that $A\leq B$. Returning with this information to \eqref{AB} and setting $\B=\A^{\f1q}$, we easily conclude \eqref{IJ3} for rational powers $\alpha$. The~general case follows by a~density argument (the continuity of the~mapping $\alpha\mapsto\A^{1-\alpha}$ follows immediately from the~spectral decomposition, while continuity of $\alpha\mapsto\partial\A^{\alpha}$ is a~consequence of the~integral representation formula for $\partial\exp\mathbb{X}$, see \cite{Wilcox_1967} or \cite{BathoryOdh} for more details).

\end{proof}

\end{document}